\documentclass{amsart}

\usepackage{amsmath, amssymb, amsthm}
\usepackage{color}
\usepackage{url}
\usepackage{mathrsfs}
\usepackage{enumitem}
\usepackage{tikz}
\usepackage{float}

\setlength{\textwidth}{\paperwidth}
\addtolength{\textwidth}{-2.5 in}
\calclayout

\newtheorem{theorem}{Theorem}[section]
\newtheorem{lemma}[theorem]{Lemma}
\newtheorem{proposition}[theorem]{Proposition}

\newtheorem{claim}[theorem]{Claim}

\newtheorem{fact}[theorem]{Fact}
\newtheorem{question}[theorem]{Question}
\newtheorem*{THM}{Main Theorem}
\newtheorem*{COR}{Main Corollary}
\newtheorem*{Claim 53}{Claim 5.3'}

\theoremstyle{definition}
\newtheorem{definition}[theorem]{Definition}
\newtheorem{remark}[theorem]{Remark}
\newtheorem{observation}[theorem]{Observation}
\newtheorem{example}[theorem]{Example}

\newcommand{\dom}{\mathrm{dom}}
\newcommand{\bb}{\mathbb}

\newcommand{\otp}{\mathrm{otp}}

\newcommand{\mb}{\mathbf}

\newcommand*\oline[1]{%
  \vbox{%
    \hrule height 0.5pt
    \kern0.25ex
    \hbox{%
      \kern -0.2em
      \ifmmode#1\else\ensuremath{#1}\fi
      \kern 0em
    }
  }
}

\synctex = 1

\title{Simultaneously vanishing higher derived limits without large cardinals}

\author{Jeffrey Bergfalk}
\address{Universit\"{a}t Wien \\
Institut f\"{u}r Mathematik \\
Kurt G\"{o}del Research Center \\
Kolingasse 14-16 \\
1010 Wien, Austria}
\email{jeffrey.bergfalk@univie.ac.at}
\thanks{The first author was partially supported by Austrian Science Foundation (FWF) Grant Number
Y1012-N35.}

\author{Michael Hru\v{s}\'{a}k}
\address{Centro de Ciencas Matem\'{a}ticas\\
UNAM\\
A.P. 61-3, Xangari, Morelia, Michoac\'{a}n\\
58089, M\'{e}xico}
\email{michael@matmor.unam.mx}
\thanks{The second author was partially supported by a CONACyT grant A1-S-16164 and PAPIIT grant IN104220.}

\author{Chris Lambie-Hanson}
\address{Department of Mathematics and Applied Mathematics \\
Virginia Commonwealth University \\
Richmond, VA 23284 \\ United States}
\email{cblambiehanso@vcu.edu}
\thanks{The work in this paper began during a visit by the third author to the
Centro de Ciencias Matem\'{a}ticas at UNAM Morelia while the first author was
a postdoc at that institution. Both authors would like to thank the CCM for its hospitality
and support.}

\begin{document}

\begin{abstract}
  A question dating to Sibe Marde\v{s}i\'{c} and Andrei Prasolov's 1988 work \cite{mp},
  and motivating a considerable amount of set theoretic work in the ensuing years, is that of
  whether it is consistent with the \textsf{ZFC} axioms for the higher derived limits
  $\mathrm{lim}^n$ $(n>0)$ of a certain inverse system $\mathbf{A}$ indexed by ${^\omega}\omega$ to
  simultaneously vanish. An equivalent formulation of this question is that of whether it is
  consistent for all $n$-coherent families of functions indexed by ${^\omega}\omega$ to be trivial.
  In this paper, we prove that, in any forcing extension given by adjoining $\beth_\omega$-many
  Cohen reals, $\mathrm{lim}^n \mathbf{A}$ vanishes for all $n > 0$. Our proof involves
  a detailed combinatorial analysis of the forcing extension and repeated applications
  of higher dimensional $\Delta$-system lemmas. This work removes all large cardinal
  hypotheses from the main result of \cite{SVHDL} and substantially reduces the
  least value of the continuum known to be compatible with the simultaneous vanishing
  of $\mathrm{lim}^n \mathbf{A}$ for all $n > 0$.
\end{abstract}

\subjclass[2010]{03E35, 03E75, 18E25, 55N07}

\keywords{Cohen forcing, derived limit, nontrivial coherence, Delta system lemma, strong homology}

\maketitle

\section{Introduction}
The set theoretic study of higher derived limits traces principally to Sibe Marde\v{s}i\'{c} and Andrei Prasolov's 1988 work \cite{mp}; it was in this paper that a relationship between \begin{enumerate}
\item the continuity properties of strong homology,
\item the behavior of the derived limits of inverse systems indexed by functions from $\omega$ to $\omega$, and
\item infinitary combinatorics and assumptions supplementary to \textsf{ZFC},
\end{enumerate}
was first perceived. The most elementary of the systems as in (2) was denoted $\mathbf{A}$ by Marde\v{s}i\'{c} and Prasolov in \cite{mp}, and the works which followed would show the behavior of its higher limits sensitive to a variety of set theoretic hypotheses; additional interest in these behaviors derived from their connection to the broader set theoretic theme of nontrivial coherence (main works in this line were \cite{dsv,todpp,kamo,todcmpct,far1,moorepfa,B1,bbm}; see \cite[Introduction]{SVHDL} for a brief research history). The outstanding question tracing to \cite{mp} was whether the statement ``$\mathrm{lim}^n\mathbf{A}=0$ for all $n>0$'' is consistent with the \textsf{ZFC} axioms; this was affirmatively  answered in \cite{SVHDL} in 2019, under the assumption of the existence of a weakly compact cardinal. Several immediately ensuing questions are listed in the conclusion of \cite{SVHDL}. The first of these, that of the consistency strength of this statement, is answered by our main result:
\begin{THM} The statement ``$\mathrm{lim}^n\mathbf{A}=0$ for all $n>0$'' holds in the extension of $V$ by the forcing $\mathrm{Add}(\omega,\beth_\omega)$ for adjoining $\beth_\omega$-many Cohen reals.
\end{THM}
In particular, the statement ``$\mathrm{lim}^n\mathbf{A}=0$ for all $n>0$'' carries no large cardinal strength whatsoever. The second of the questions listed in \cite{SVHDL} is that of the minimum value of the continuum compatible with this statement. The works \cite{mp} and \cite{SVHDL} established for that value lower and upper bounds of $\aleph_2$ and a weakly inaccessible cardinal, respectively; the gap between them was substantial.
\begin{COR} It is consistent relative to the \textsf{ZFC} axioms that $\mathrm{lim}^n\mathbf{A}=0$ for all $n>0$ and $2^{\aleph_0}=\aleph_{\omega+1}$.
\end{COR}
As noted in \cite{SVHDL}, plausible scenarios exist in which $\aleph_{\omega+1}$ is optimal, a point we return to in our conclusion below.

In order to describe the structure of our paper, we should first say a few words about our overall argument and, in particular, about how it both builds on and departs from that of \cite{SVHDL}. In both that work and this one, the idea is to argue in a given forcing extension that an arbitrary $n$-coherent family of functions $\Phi$ is trivial. In both cases, this is achieved in two steps: \begin{itemize}
\item First, a trivialization of the restricted family $\Phi\restriction A$ is found for some $A\subseteq{^\omega}\omega$.
\item Second, trivializations of $\Phi\restriction A$ are shown to extend to trivializations of all of $\Phi$.
\end{itemize}
The requirements of these two steps are in tension; what's wanted is an $A$ which is at once ``sufficiently small'' and ``sufficiently large'' to effect the first and second steps, respectively. This is a tension which the large cardinal assumption of \cite{SVHDL} may be viewed as resolving: there, the weak compactness of a cardinal $\kappa$ manifests as multidimensional $\Delta$-system relationships among large families of conditions in a finite support forcing iteration of length $\kappa$. These families' homogeneities lend them a ``smallness'' of the sort called for in step one; this being an iteration of Hechler forcings ensures that, nevertheless, the associated sets $A$ are $\leq^*$-cofinal in ${^\omega}\omega$, from which step two follows easily.

In the present work, cardinal arithmetic and inductive hypotheses on $n$ together take the place of the large cardinal assumption in \cite{SVHDL}. Here again, higher-dimensional $\Delta$-systems lie at the heart of step one, and we draw on \cite{hdds} for their description and analysis. Observe, however, that without large cardinal assumptions, such systems can only appear, in general, together with some drop in cardinality. In consequence, the set $A$ associated to such a system in step one of our argument is small in a much stronger sense than in \cite{SVHDL}. Nevertheless, in the context of Cohen forcing, genericity arguments coupled, at each stage $n$, with inductive hypotheses on the triviality of $k$-coherent families of functions for $k<n$ allow us to propagate the triviality of $\Phi\restriction A$ to all of $\Phi$ as desired.

Our account of this argument is structured as follows: in Section \ref{section2}, we record our basic conventions, some results on higher-dimensional $\Delta$-systems, and the conversion of assertions about $\mathrm{lim}^n\,\mathbf{A}$ to assertions about the triviality of $n$-coherent families of functions. This section includes a brief homological interlude that set theorists, for example,  may safely ignore: from Section \ref{highercoherencesection}, only Definition \ref{NCOH}, Fact \ref{mostmainfact}, and Proposition \ref{trivial_equivalence_fact} are needed  in the remainder of the paper. In Section \ref{n1section}, we prove a strong form of the $n=1$ instance of our main theorem. In Section \ref{propagatingsection}, we describe the ``step two'' portion of our argument outlined above, showing how, at each stage $n>1$, our trivializations of $\Phi\restriction A$ will extend to all of $\Phi$. In Section \ref{defining_trivializations_section}, we record a framework deriving from \cite{SVHDL} for defining trivializations of families like $\Phi\restriction A$. As the reader may have surmised, the paper from this point forward is fairly technical, and we therefore close this section with several heuristic comments. In Section \ref{highernsection}, we prove strong forms of the $n>1$ instances of our main theorem. In Section \ref{conclusion}, we conclude with a list of some of the more prominent questions arising in the field of set-theoretic research into higher derived limits, along with a discussion of some of their interrelations.
\section{Preliminaries}\label{section2}

\subsection{Notational conventions}\label{conventionssubsection}
If $X$ is a set and $\kappa$ is a cardinal, then $[X]^\kappa = \{Y \subseteq X
\mid |Y| = \kappa\}$ and $[X]^{<\kappa} = \{Y \subseteq X \mid |Y| < \kappa\}$.
If $\kappa$ and $\lambda$ are cardinals, then we say that $\lambda$ is
\emph{${<}\kappa$-inaccessible} if $\nu^{<\kappa} < \lambda$ for all $\nu < \lambda$.
If $X$ is a set of ordinals, then $\otp(X)$ denotes the order-type of $X$.
We will often view finite sets of ordinals as finite increasing sequences of
ordinals, and vice versa. For example, if $a \in [\mathrm{On}]^{<\omega}$ and
$\ell < \otp(a)$, then $a(\ell)$ is the unique $\alpha \in a$ such that
$|a \cap \alpha| = \ell$. If $\mb{m} \subseteq a$, then $a[\mb{m}] = \{a(\ell) \mid
\ell \in \mb{m}\}$. For any set $X$ of ordinals and natural number $n$, the notation $(\alpha_0, \ldots, \alpha_{n-1}) \in [X]^n$ will denote the
conjunction of the statements $\{\alpha_0, \ldots, \alpha_{n-1}\} \in [X]^n$ and
$\alpha_0 < \ldots < \alpha_{n-1}$. Frequently in what follows we index objects by finite sets, either of ordinals or of functions or of other finite sets. Our use of commas or curly brackets in the associated subscripts or superscripts is, in general, according to no other principle than readability. We will often, for example, write expressions like $q_{\alpha\beta}$ for expressions like $q_{\{\alpha, \beta\}}$; we handle these matters with some greater care, however, in the context of the more technical Section \ref{highernsection}.

The forcings appearing herein will all be of the form $\bb{P} = \mathrm{Add}(\omega, \chi)$,
where $\chi$ is an uncountable cardinal. We think of the conditions of
$\bb{P}$ as finite partial functions from $\chi \times \omega$ to $\omega$,
ordered by reverse inclusion. Forcing with $\bb{P}$ produces a generic function
$F:\chi \times \omega \rightarrow \omega$. For a fixed $\alpha < \chi$ we call the
function $F(\alpha, \cdot):\omega \rightarrow \omega$ the \emph{$\alpha^{\mathrm{th}}$
Cohen real added by $\bb{P}$}, and we will typically denote this function by
$f_\alpha$; we denote the canonical $\bb{P}$-name in $V$ for $f_\alpha$ by $\dot{f}_\alpha$. If $G$
is $\bb{P}$-generic over $V$ and $W \subseteq \chi$ then $G_W$ denotes
$\{p \in G \mid \dom(p) \subseteq W \times \omega\}$. For any condition $p$ in
$\bb{P}$ let $u(p)$ denote the set $\{\alpha < \chi \mid
\dom(p) \cap (\{\alpha\} \times \omega) \neq \emptyset\}$ and let
$\bar{p}$ denote the finite partial function from $\otp(u(p)) \times \omega$ to $\omega$
defined as follows: for all $i < \otp(u(p))$ and all $m < \omega$, define
$(i, j)$ to be in the domain of $\bar{p}$ if and only if $(u(p)(i), j) \in \dom(p)$; if so, let $\bar{p}(i,j) = p(u(p)(i),m)$. Intuitively,
$\bar{p}$ is a ``collapsed'' version of $p$. Notice that the set
$\{\bar{p} \mid p \in \bb{P}\}$ is a subset of the set of finite partial functions
from $\omega \times \omega$ to $\omega$ and is therefore countable.

For notational conventions pertaining more directly to coherent families of functions, see Section \ref{highercoherencesection} below.

\subsection{Higher-dimensional $\Delta$-systems}
Our proofs will make repeated use of multidimensional $\Delta$-system lemmas. For this purpose we recall some relevant definitions and results from \cite{hdds}.

\begin{definition} \label{aligned_def}
  Suppose that $a$ and $b$ are sets of ordinals.
  \begin{enumerate}
    \item We say that $a$ and $b$ are \emph{aligned} if $\otp(a) = \otp(b)$ and if $\otp(a \cap \gamma) = \otp(b \cap \gamma)$ for all
    $\gamma \in a \cap b$.
    In other words, if $\gamma$ is a common element of two aligned sets $a$ and $b$, then it
    occupies the same relative position in both $a$ and $b$.
    \item If $a$ and $b$ are aligned then we let $\mb{r}(a,b) :=
    \{i < \otp(a) \mid a(i) = b(i)\}$. Notice that, in this case,
    $a \cap b = a[\mb{r}(a,b)] = b[\mb{r}(a,b)]$.
  \end{enumerate}
\end{definition}

\begin{definition} \label{delta_system_def}
  Suppose that $H$ is a set of ordinals, $n$ is a positive integer, and $u_b$ is a set of ordinals
  for all $b \in [H]^n$.
  We call $\langle u_b \mid b \in [H]^n \rangle$ a \emph{uniform
  $n$-dimensional $\Delta$-system} if there is an ordinal $\rho$ and, for
  each $\mb{m} \subseteq n$, a set $\mb{r}_{\mb{m}} \subseteq \rho$
  satisfying the following statements.
  \begin{enumerate}
    \item $\otp(u_b) = \rho$ for all $b \in [H]^n$.
    \item For all $a,b \in [H]^n$ and $\mb{m} \subseteq n$, if $a$ and $b$ are aligned with $\mb{r}(a,b) = \mb{m}$,
    then $u_a$ and $u_b$ are aligned with $\mb{r}(u_a, u_b) = \mb{r}_{\mb{m}}$.
    \item For all $\mb{m}_0, \mb{m}_1 \subseteq n$, we have
    $\mb{r}_{\mb{m}_0 \cap \mb{m}_1} = \mb{r}_{\mb{m}_0} \cap \mb{r}_{\mb{m}_1}$.
  \end{enumerate}
\end{definition}

The following is a crucial feature of these $\Delta$-systems:

\begin{lemma} \label{extension_lemma}
  Suppose that $1 \leq n < \omega$ and $\langle u_b \mid b \in [H]^n \rangle$ is
  a uniform $n$-dimensional $\Delta$-system, as witnessed by $\rho$ and
  $\langle \mb{r}_{\mb{m}} \mid \mb{m} \subseteq n \rangle$, and assume for
  simplicity that $H$ has no largest element. For each $m < n$ and each $a \in [H]^{m}$, define a set
  $u_a$ by choosing $b \in [H]^n$ such that $b[m] = a$ and
  setting $u_a = u_b[\mb{r}_m]$. (Here and in similar places later in the paper,
  $m$ denotes the set of natural numbers less than $m$, so, for instance,
  $b[m] = \{b(\ell) \mid \ell < m\}$, $\mb{r}_m = \mb{r}_{\{\ell \mid \ell < m\}}$, and the statement that $b[m] = a$ amounts to
  asserting that $b$ end-extends $a$.) Then the following hold.
  \begin{enumerate}
    \item These definitions are independent of our choice of $b$.
    \item For each $a \in [H]^{<n}$, the collection
    \[
      \{u_{a \cup \{ \beta \}} \mid \beta \in H \setminus (\max(a) + 1)\}
    \]
    is a (1-dimensional) $\Delta$-system with root $u_a$.
  \end{enumerate}
\end{lemma}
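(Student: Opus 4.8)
The plan is to derive everything from clause (2) of Definition~\ref{delta_system_def}, using two preliminary observations drawn from clause (3): the assignment $\mb{m} \mapsto \mb{r}_{\mb{m}}$ is monotone (if $\mb{m}_0 \subseteq \mb{m}_1$ then $\mb{r}_{\mb{m}_0} = \mb{r}_{\mb{m}_0 \cap \mb{m}_1} = \mb{r}_{\mb{m}_0} \cap \mb{r}_{\mb{m}_1}$, so $\mb{r}_{\mb{m}_0} \subseteq \mb{r}_{\mb{m}_1}$), and $\mb{r}_n = \rho$ (take $a = b$ in clause (2), noting $\mb{r}(a,a) = n$ and $\mb{r}(u_a,u_a) = \rho$). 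I also note at the outset that, since $H$ has no largest element (and we may assume $H \ne \emptyset$), the set $H \setminus (\gamma+1)$ is infinite for every ordinal $\gamma$, so all the end-extensions invoked below exist.

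For (1), suppose $b, b' \in [H]^n$ both end-extend $a \in [H]^m$. These two sets need not themselves be aligned, so I cannot apply clause (2) to them directly; instead I compare each to a common third witness. Choose $b'' \in [H]^n$ end-extending $a$ whose remaining $n-m$ elements all exceed $\max(b \cup b')$. Then $b \cap b'' = a = b' \cap b''$, and checking relative positions shows that $b$ and $b''$ are aligned with $\mb{r}(b, b'') = \{0, \dots, m-1\}$, and likewise $b'$ and $b''$. Clause (2) then gives that $u_b$ and $u_{b''}$ are aligned with $\mb{r}(u_b, u_{b''}) = \mb{r}_m$, so $u_b[\mb{r}_m] = u_b \cap u_{b''} = u_{b''}[\mb{r}_m]$, and symmetrically $u_{b'}[\mb{r}_m] = u_{b''}[\mb{r}_m]$; hence $u_b[\mb{r}_m] = u_{b'}[\mb{r}_m]$, which is the asserted independence.

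For (2), fix $a \in [H]^m$ with $m < n$ and distinct $\beta, \beta' \in H \setminus (\max(a)+1)$, say $\beta < \beta'$. By part (1) I am free to compute $u_{a \cup \{\beta\}}$ and $u_{a \cup \{\beta'\}}$ from any convenient witnesses, so I choose any $b' \in [H]^n$ end-extending $a \cup \{\beta'\}$ and then $b \in [H]^n$ end-extending $a \cup \{\beta\}$ all of whose remaining elements exceed $\max(b')$. Then $u_{a \cup \{\beta\}} = u_b[\mb{r}_{m+1}]$, $u_{a \cup \{\beta'\}} = u_{b'}[\mb{r}_{m+1}]$, and $u_a = u_b[\mb{r}_m] = u_{b'}[\mb{r}_m]$; when $m+1 = n$ the first two equalities read $u_b$ and $u_{b'}$ since $\mb{r}_n = \rho$, so the argument is uniform in $m+1 \le n$. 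Exactly as in part (1), $b \cap b' = a$ and these sets are aligned with $\mb{r}(b, b') = \{0, \dots, m-1\}$, so clause (2) yields $u_b \cap u_{b'} = u_b[\mb{r}_m] = u_a$. Hence $u_{a \cup \{\beta\}} \cap u_{a \cup \{\beta'\}} \subseteq u_b \cap u_{b'} = u_a$; conversely, monotonicity gives $\mb{r}_m \subseteq \mb{r}_{m+1}$, so $u_a = u_b[\mb{r}_m] \subseteq u_b[\mb{r}_{m+1}] = u_{a \cup \{\beta\}}$ and likewise $u_a \subseteq u_{a \cup \{\beta'\}}$. Thus for all distinct $\beta, \beta'$ the intersection $u_{a \cup \{\beta\}} \cap u_{a \cup \{\beta'\}}$ equals $u_a$, and $u_a$ is contained in every member of the collection — precisely the statement that it is a $\Delta$-system with root $u_a$.

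I expect the only genuine obstacle to be the bookkeeping around alignment: at each step one must confirm that the ad hoc end-extensions chosen really are aligned and carry the claimed value of $\mb{r}(\cdot,\cdot)$, which is what motivates the stipulations that the newly chosen elements lie above everything chosen before, and it is here — and only here — that the hypothesis that $H$ has no largest element is used. The remaining content is a direct unwinding of Definition~\ref{delta_system_def}.
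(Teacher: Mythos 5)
Your argument for part (1) is essentially the paper's: both compare $b$ and $b'$ to a common third witness $b''$ whose tail lies above everything relevant, so that $b$ and $b''$ are aligned with $\mb{r}(b,b'')=m$, and then read off $u_b[\mb{r}_m]=u_{b''}[\mb{r}_m]=u_{b'}[\mb{r}_m]$ via clause~(2) of Definition~\ref{delta_system_def}.

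For part (2) you take a genuinely different route, and it checks out. The paper constructs $b$ and $b'$ so that they share \emph{all} coordinates except position $m$ --- that is, $\mb{r}(b,b')=n\setminus\{m\}$ --- and then computes $u_{a\cup\{\beta\}}\cap u_{a\cup\{\beta'\}}$ by intersecting with $u_b\cap u_{b'}=u_b[\mb{r}_{n\setminus\{m\}}]$ and invoking the full closure identity $\mb{r}_{\mb{m}_0}\cap\mb{r}_{\mb{m}_1}=\mb{r}_{\mb{m}_0\cap\mb{m}_1}$ with $\mb{m}_0=m+1$ and $\mb{m}_1=n\setminus\{m\}$ to collapse the expression to $u_b[\mb{r}_m]\cap u_{b'}[\mb{r}_m]=u_a$. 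You instead pick $b$ and $b'$ so that they share \emph{only} $a$ (by pushing $b$'s tail above $\max(b')$), so $\mb{r}(b,b')=m$ and $u_b\cap u_{b'}=u_a$ immediately; the containment $u_{a\cup\{\beta\}}\cap u_{a\cup\{\beta'\}}\subseteq u_b\cap u_{b'}$ then gives one inclusion, and the reverse inclusion $u_a\subseteq u_{a\cup\{\beta\}}$ follows from the monotonicity $\mb{r}_m\subseteq\mb{r}_{m+1}$, which you extract as a preliminary from clause~(3). The net effect is that your proof needs only the monotonicity consequence of clause~(3) rather than the full intersection identity, and it avoids the two-step rewriting of the intersection; it does require part~(1) to be applied to compute $u_{a\cup\{\beta\}}$ and $u_{a\cup\{\beta'\}}$ from your particular $b,b'$, but that is also implicit in the paper's computation. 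Your explicit remark that $\mb{r}_n=\rho$, so the case $m+1=n$ degenerates correctly, is a worthwhile bookkeeping point that the paper leaves tacit. Both arguments are correct; yours is arguably a touch more economical, while the paper's makes the role of the $\mb{r}$-calculus more visible.
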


\begin{proof}
  We first show (1). Indeed, fix $a \in [H]^{<n}$ and suppose that $b, b' \in [H]^n$ are such that
  $b[m] = a = b'[m]$. Since $H$ has no largest element, we can find
  $b'' \in [H]^n$ such that $b''[m] = a$ and $b''(m) > \max(b \cup b')$.
  In particular, $b$ and $b''$ are aligned and, likewise, $b'$ and $b''$ are aligned.
  Moreover, we have $m \subseteq \mb{r}(b, b'')$ and $m \subseteq \mb{r}(b', b'')$.
  It follows that $u_b$ and $u_{b''}$ are aligned and $u_b[\mb{r}_m] =
  u_{b''}[\mb{r}_m]$. Similarly, $u_{b'}[\mb{r}_m] = u_{b''}[\mb{r}_m]$, so
  $u_b[\mb{r}_m] = u_{b'}[\mb{r}_m]$.

  We now show (2). Fix $a \in [H]^{<n}$, let $m = |a|$, and suppose that
  $\beta < \beta'$ are elements of $H \setminus (\max(a) + 1)$. Fix
  ordinals $\gamma_{m + 1} < \gamma_{m + 2} < \ldots < \gamma_{n-1}$
  in $H \setminus (\beta' + 1)$. Let $b = a \cup \{\beta\} \cup \{\gamma_{m + 1},
  \ldots, \gamma_{n-1}\}$ and $b' = a \cup \{\beta'\} \cup \{\gamma_{m + 1}, \ldots,
  \gamma_{n-1}\}$. Then $b$ and $b'$ are aligned, with $\mb{r}(b, b') =
  n \setminus \{m\}$, and hence $u_b \cap u_{b'} = u_b[\mb{r}_{n \setminus \{m\}}] =
  u_{b'}[\mb{r}_{n \setminus \{m\}}]$. Moreover, we have $u_{a \cup \{\beta\}} = u_b[\mb{r}_{m + 1}]$
  and $u_{a \cup \{\beta'\}} = u_{b'}[\mb{r}_{m+1}]$. Putting this all together,
  we obtain
  \begin{align*}
    u_{a \cup \{\beta\}} \cap u_{a \cup \{\beta'\}} &= u_b[\mb{r}_{m + 1}]
    \cap u_{b'}[\mb{r}_{m+1}] \\
    &= u_b[\mb{r}_{m + 1}] \cap u_{b'}[\mb{r}_{m+1}] \cap u_b[\mb{r}_{n \setminus \{m\}}]
    \cap u_{b'}[\mb{r}_{n \setminus \{m\}}] \\
    &= u_b[\mb{r}_m] \cap u_{b'}[\mb{r}_m] \\
    &= u_a \cap u_a = u_a,
  \end{align*}
  thus completing the proof.
\end{proof}

We now recall a fact which follows from the main result of \cite{hdds}.
In its statement, we use the following notation.

\begin{definition}\label{sigmadefinition}
  Suppose that $\lambda$ is an infinite regular cardinal. Recursively define
  cardinals $\sigma(\lambda, n)$ for $1 \leq n < \omega$ by letting
  $\sigma(\lambda, 1) = \lambda$ and, given $1 \leq n < \omega$, letting
  $\sigma(\lambda, n + 1) = (2^{<\sigma(\lambda, n)})^+$.
\end{definition}

\begin{fact}{\cite[Theorem 2.10]{hdds}} \label{delta_systems_fact}
  Suppose that
  \begin{itemize}
    \item $1 \leq n < \omega$;
    \item $\kappa < \lambda$ are infinite cardinals, $\lambda$ is regular and
    ${<}\kappa$-inaccessible, and $\mu = \sigma(\lambda, n)$;
    \item $c:[\mu]^n \rightarrow 2^{<\kappa}$;
    \item for all $b \in [\mu]^n$, we are given a set $u_b \in [\mathrm{On}]^{<\kappa}$.
  \end{itemize}
  Then there are an $H \in [\mu]^\lambda$ and $k < 2^{<\kappa}$ such that
  \begin{enumerate}
    \item $c(b) = k$ for all $b \in [H]^n$;
    \item $\langle u_b \mid b \in [H]^n \rangle$ is a uniform $n$-dimensional
    $\Delta$-system.
  \end{enumerate}
\end{fact}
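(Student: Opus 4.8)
The plan is to induct on $n$. The base case $n=1$ is essentially the classical $\Delta$-system lemma: since $\mu=\sigma(\lambda,1)=\lambda$ is regular and ${<}\kappa$-inaccessible we have $2^{<\kappa}<\lambda$ and $\nu^{<\kappa}<\lambda$ for every $\nu<\lambda$, so from $\langle u_\alpha\mid\alpha<\lambda\rangle$ (writing $u_\alpha$ for $u_{\{\alpha\}}$) the classical lemma extracts an $H_0\in[\lambda]^\lambda$ on which the $u_\alpha$ form a $\Delta$-system with some root $R$; a further round of pigeonhole, there being at most $2^{<\kappa}<\lambda$ relevant possibilities, shrinks $H_0$ to $H\in[\lambda]^\lambda$ on which $c$ is constant, $\otp(u_\alpha)$ is a fixed $\rho<\kappa$, and $R$ occupies a fixed set of positions $\mb{r}\subseteq\rho$ inside each $u_\alpha$. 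Taking $\mb{r}_{\{0\}}=\rho$ and $\mb{r}_{\varnothing}=\mb{r}$, the clauses of Definition \ref{delta_system_def} are immediate, clause (3) reducing to the inclusion $\mb{r}\subseteq\rho$.

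For the inductive step, assume the statement holds for $n$ and, with $\lambda,\kappa$ fixed as in the hypotheses, write $\mu_n=\sigma(\lambda,n)$ and $\theta=2^{<\mu_n}$, so that $\mu=\sigma(\lambda,n+1)=\theta^+$ is regular. Given $c:[\mu]^{n+1}\to 2^{<\kappa}$ and $\langle u_b\mid b\in[\mu]^{n+1}\rangle$, the idea is to analyse, for each $\gamma$ with $\mu_n\le\gamma<\mu$, the ``slice'' obtained by fixing $\gamma$ as the top coordinate: choosing $X_\gamma\subseteq\gamma$ with $\otp(X_\gamma)=\mu_n$, transporting the coloring $a\mapsto c(a\cup\{\gamma\})$ and the assignment $a\mapsto u_{a\cup\{\gamma\}}$ along $X_\gamma\cong\mu_n$, and applying the induction hypothesis, one obtains $H_\gamma\in[X_\gamma]^\lambda$ on which this slice data forms a uniform $n$-dimensional $\Delta$-system. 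Its ``shape'' — the constant value of $c(\,\cdot\,\cup\{\gamma\})$, the order type $\rho_\gamma<\kappa$ of the sets $u_{a\cup\{\gamma\}}$, and the family $\langle\mb{r}^\gamma_{\mb{m}}\mid\mb{m}\subseteq n\rangle$ of subsets of $\rho_\gamma$ — is coded by an element of a set of size $\le 2^{<\kappa}<\mu$, so pigeonhole on $\gamma$ lets us assume a single shape $(k^*,\rho^*,\langle\mb{r}^*_{\mb{m}}\rangle)$ occurs for all $\gamma$ in some $S\in[\mu]^\mu$.

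The crux — and where I expect the real work to lie — is the \emph{fusion} of these slices into a single uniform $(n+1)$-dimensional $\Delta$-system. Concretely one wants $H\in[\mu]^\lambda$ with $H\cap\gamma\subseteq H_\gamma$ for every $\gamma\in H$, so that the structure inherited within each slice is exactly the fixed uniform $n$-dimensional one (and in particular $c$ is constant on $[H]^{n+1}$), together with uniformity of the \emph{cross-slice} behaviour, i.e. of how $u_{a\cup\{\gamma\}}$ relates to $u_{a\cup\{\gamma'\}}$ for $\gamma<\gamma'$ lying above $\max a$. Granting such an $H$, one defines $\mb{r}_{\mb{m}}$ for $\mb{m}\subseteq n+1$ by cases according to whether the top coordinate $n$ belongs to $\mb{m}$: for $\mb{m}\ni n$ the value is governed by the within-slice data (essentially the $\mb{r}^*$ above), and for $\mb{m}\subseteq n$ by the stabilized cross-slice data. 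Clauses (1) and (2) of Definition \ref{delta_system_def} are then read off directly, and clause (3) is verified by splitting on whether $n\in\mb{m}_0\cap\mb{m}_1$, each case reducing to the corresponding instance of clause (3) at level $n$.

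To produce $H$, I would run a simultaneous recursion along an increasing continuous chain $\langle M_\xi\mid\xi<\mu\rangle$ of elementary submodels of a large enough $H(\chi)$, arranged so that $\mu_n\cup\lambda\cup\{c,\langle u_b\rangle\}\subseteq M_0$, $|M_\xi|<\mu$, and $M_\xi\cap\mu$ is an ordinal for all $\xi$ in a club, choosing the slice attached to $\gamma=\delta_\xi:=M_\xi\cap\mu$ so that it coheres with the ordinals $\delta_{\xi'}$, $\xi'<\xi$, already placed into $H$. The point of the hypothesis $\mu=(2^{<\mu_n})^+$ is exactly that, once the domain of such a slice has been identified via the chain, its full datum is an object drawn from a set of size at most $2^{<\mu_n}=\theta$, so the regularity of $\theta^+$ permits a final pigeonhole stabilizing these data along a set of size $\mu$; this set is then thinned to size $\lambda$ using the homogeneity of $c$ on $(n+1)$-tuples guaranteed by the Erd\H{o}s--Rado-type partition relation that the magnitude of $\sigma(\lambda,n+1)$ is designed to provide. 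Two points I would handle with care are (a) that the induction hypothesis genuinely applies below each $\gamma$ with target size $\lambda$, which is harmless since one needs only an order-type-$\mu_n$ subset of $\gamma$, and, more seriously, (b) the ordering of the several homogenizations — within-slice, shape, cross-slice, and $c$ on top tuples — so that none of them undoes another; getting this bookkeeping right is, I expect, the main content of the argument of \cite{hdds}.
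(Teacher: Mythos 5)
Note first that the paper does not prove this statement: it is quoted as a Fact from \cite[Theorem~2.10]{hdds}, so there is no in-paper argument to compare against. Your sketch is a plausible guess at the broad shape of the proof in \cite{hdds}: an induction on $n$ whose base case is the classical generalized $\Delta$-system lemma together with a pigeonhole to stabilize the color, the order-type, and the positions of the root, and whose inductive step ``slices'' along the top coordinate, applies the $n$-dimensional result to each slice, and then stabilizes and fuses. The base case as you write it is essentially correct, and the check of clause~(3) of Definition~\ref{delta_system_def} there is exactly the inclusion $\mb{r}_\emptyset\subseteq\mb{r}_{\{0\}}=\rho$, as you say.

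The genuine gap is the fusion step, which you yourself flag as ``the main content of the argument of \cite{hdds}.'' Stabilizing the shape $(k^*,\rho^*,\langle\mb{r}^*_{\mb{m}}\rangle)$ of the slices only guarantees that each slice, after the order-collapse $X_\gamma\cong\mu_n$, looks internally the same; it says nothing about how $u_{a\cup\{\gamma\}}$ and $u_{a\cup\{\gamma'\}}$ overlap across distinct top coordinates $\gamma\neq\gamma'$, which is exactly clause~(2) of Definition~\ref{delta_system_def} in the cases where $n\notin\mb{r}(b,b')$, nor about the coherence condition $H\cap\gamma\subseteq H_\gamma$ required to even read the stabilized within-slice data off a single final set $H$. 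The elementary-submodel recursion you gesture at would need to simultaneously (a) arrange $H\cap\gamma\subseteq H_\gamma$ for every $\gamma\in H$, (b) stabilize the cross-slice alignment and root positions, and (c) still leave a set of size $\lambda$ after the final thinning; each of these interacts with the others and with the choices of $X_\gamma$ and $H_\gamma$ made before the recursion begins, and you give no invariant that makes them compatible, nor a verification that clause~(3) of Definition~\ref{delta_system_def} holds for the mixed $\mb{r}_{\mb{m}}$ you propose to assemble from the within-slice and cross-slice data. Absent that, the proposal identifies what is likely the right overall architecture but does not contain a proof of the $(n+1)$-case.
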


In order to motivate these definitions, let us highlight a way in which Fact
\ref{delta_systems_fact} will be employed.

\begin{lemma} \label{compatibility_lemma}
  Suppose that $n$ is a positive integer, $H$ is a set of ordinals, and
  $\langle p_b \mid b \in [H]^n \rangle$ is a sequence of conditions in some $\bb{P}=\mathrm{Add}(\omega,\chi)$
  such that
  \begin{itemize}
    \item there is a fixed $\bar{p}$ such that $\bar{p} = \bar{p}_b$ for all
    $b \in [H]^n$, and
    \item $\langle u(p_b) \mid b \in [H]^n \rangle$ is a uniform
    $n$-dimensional $\Delta$-system.
  \end{itemize}
  Then for all $a, a' \in [H]^n$, if $a$ and $a'$ are aligned, then
  $p_a$ and $p_{a'}$ are compatible in $\bb{P}$.
\end{lemma}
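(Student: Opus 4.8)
The plan is to verify directly that $p_a$ and $p_{a'}$ agree on the intersection of their domains; since conditions in $\bb{P} = \mathrm{Add}(\omega,\chi)$ are finite partial functions ordered by reverse inclusion, this is exactly what is needed to conclude that $p_a \cup p_{a'}$ is a common extension. Write $u_b := u(p_b)$ for $b \in [H]^n$, and recall that $\dom(p_b) \subseteq u_b \times \omega$ for each such $b$; hence $\dom(p_a) \cap \dom(p_{a'}) \subseteq (u_a \cap u_{a'}) \times \omega$, and the whole matter reduces to controlling the behavior of $p_a$ and $p_{a'}$ on coordinates $\gamma \in u_a \cap u_{a'}$.

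First I would invoke the $\Delta$-system hypothesis. Since $a$ and $a'$ are aligned, the set $\mb{m} := \mb{r}(a,a') \subseteq n$ is defined, and clause (2) of Definition \ref{delta_system_def}, applied to $\langle u_b \mid b \in [H]^n \rangle$, yields that $u_a$ and $u_{a'}$ are aligned (with $\mb{r}(u_a,u_{a'}) = \mb{r}_{\mb{m}}$, though only the alignment itself will be used). By Definition \ref{aligned_def}, this alignment means precisely that any common element $\gamma \in u_a \cap u_{a'}$ occupies the same position in each set: there is a single index $i = \otp(u_a \cap \gamma) = \otp(u_{a'} \cap \gamma)$ with $u_a(i) = u_{a'}(i) = \gamma$.

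Next I would bring in the collapsed conditions $\bar p_a$ and $\bar p_{a'}$. Fix $\gamma \in u_a \cap u_{a'}$, let $i$ be the index just described, and fix $j < \omega$. Unwinding the definition of $\bar p_a$, we have $(\gamma,j) \in \dom(p_a)$ if and only if $(i,j) \in \dom(\bar p_a)$, and in that case $p_a(\gamma,j) = \bar p_a(i,j)$; likewise $(\gamma,j) \in \dom(p_{a'})$ iff $(i,j) \in \dom(\bar p_{a'})$, with $p_{a'}(\gamma,j) = \bar p_{a'}(i,j)$ in that case. Since $\bar p_a = \bar p_{a'} = \bar p$ by hypothesis, we conclude that $(\gamma,j) \in \dom(p_a)$ iff $(i,j) \in \dom(\bar p)$ iff $(\gamma,j) \in \dom(p_{a'})$, and that whenever $(\gamma,j)$ lies in both domains we have $p_a(\gamma,j) = \bar p(i,j) = p_{a'}(\gamma,j)$. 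As $\gamma$ and $j$ were arbitrary, $p_a$ and $p_{a'}$ agree on $\dom(p_a) \cap \dom(p_{a'})$, completing the argument.

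I do not expect a genuine obstacle: this lemma is essentially a bookkeeping exercise, and the alignment notion of Definition \ref{aligned_def} together with the collapsing operation $p \mapsto \bar p$ were arranged precisely so that aligned index sets force matching coordinates and matching values. The real content lies upstream, in producing sequences $\langle p_b \mid b \in [H]^n \rangle$ that satisfy the two bulleted hypotheses simultaneously — which is where Fact \ref{delta_systems_fact} is brought to bear.
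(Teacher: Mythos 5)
Your proof is correct and follows essentially the same route as the paper's: apply clause (2) of the uniform $\Delta$-system definition to align $u(p_a)$ and $u(p_{a'})$, use alignment to identify the common index $i$ of any shared coordinate $\gamma$, and then appeal to $\bar p_a = \bar p_{a'} = \bar p$ to conclude agreement. Your version is slightly more explicit in observing that the domains themselves match up (not merely the values on the common domain), but the argument is the same.
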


\begin{proof}
  Fix $a,a' \in [H]^n$ such that $a$ and $a'$ are aligned. To show that
  $p_a$ and $p_{a'}$ are compatible, it suffices to show that, for every
  $(\alpha, m) \in \dom(p_a) \cap \dom(p_{a'})$, we have $p_a(\alpha, m) =
  p_{a'}(\alpha, m)$. To this end, fix $(\alpha, m) \in \dom(p_a) \cap
  \dom(p_{a'})$, so $\alpha \in u(p_a) \cap u(p_{a'})$. Since
  $\langle p_b \mid b \in [H]^n \rangle$ is a uniform $n$-dimensional
  $\Delta$-system and $a, a' \in [H]^n$ are aligned, it follows that
  $u(p_a)$ and $u(p_{a'})$ are aligned. There is therefore a single $i < \omega$
  such that $\alpha = u(p_\alpha)(i) = u(p_{\alpha'})(i)$. But then, since
  $\bar{p} = \bar{p}_\alpha = \bar{p}_{\alpha'}$, we have
  $p_a(\alpha, m) = \bar{p}(i, m) = p_{a'}(\alpha, m)$, as desired.
\end{proof}

\subsection{Higher-dimensional coherence and triviality}\label{highercoherencesection}
Our main theorem is an assertion about the derived limits of an inverse system $\mathbf{A}$; just as in \cite{mp} and \cite{SVHDL} (and, indeed, as in all the intervening works cited in the latter), our analysis of these limits will be via their reformulation in terms of multidimensionally coherent
indexed families of functions from subsets of $\omega^2$ to $\mathbb{Z}$. Readers are referred to the second of the aforementioned works for fuller details of this reformulation. We turn now to the relevant conventions and definitions.

Given functions $f,g : \omega \rightarrow \omega$, let $f \leq g$ if and only if
$f(j) \leq g(j)$ for all $j \in \omega$. Let $I(f)$ denote the set
$\left\{(j,k) \in \omega^2 ~ \middle| ~ k \leq f(j)\right\}$; visually, this is the region
below the graph of $f$. Given a sequence $\vec{f} = (f_0, \ldots, f_n)$ of elements of ${^\omega}\omega$, let $\wedge \vec{f}$ denote the greatest lower $\leq$-bound of the functions
$f_0, \ldots, f_n$. For any such sequence and $i \leq n$, let $\vec{f}^i$ denote the sequence of length $n$ obtained by removing the $i^{\mathrm{th}}$ entry of
  $\vec{f}$; in symbols, $\vec{f}^i = (f_0, \ldots, f_{i-1}, f_{i+ 1}, \ldots, f_n)$, sometimes written as $(f_0, \ldots, \hat{f}_i, \ldots, f_n)$. If $\pi$ is a permutation of $(0, \ldots, n)$, then $sgn(\pi)$ denotes the
  \emph{sign} or \emph{parity} of $\pi$, recorded as a $1$ or $-1$.
  The notation $\pi(\vec{f})$ denotes the sequence
  $(f_{\pi(0)}, \ldots, f_{\pi(n)})$.

If $\varphi$ and $\psi$ are partial functions from $\omega^2$ to $\mathbb{Z}$, then the expression $\varphi =^* \psi$ will mean that the set $\{(j,k)\in\mathrm{dom}(\varphi)\cap\mathrm{dom}(\psi)\mid \varphi(j,k)\neq\psi(j,k)\}$ is finite. Implicit in this expression, in other words, are restrictions of $\varphi$ and $\psi$ to their shared domain; a similar convention will apply to sums of such functions below.

\begin{definition} \label{NCOH}
  Fix an $X\subseteq\,^\omega\omega$ and a positive integer $n$ and suppose that
  \[
    \Phi = \left\langle\varphi_{\vec{f}} : I(\wedge \vec{f}) \rightarrow \bb{Z} ~
    \middle| ~ \vec{f} \in X^n\right\rangle
  \]
  is an indexed family of functions.
  \begin{itemize}
    \item $\Phi$ is \emph{alternating} if
    \[
      \varphi_{\pi(\vec{f})} = sgn(\pi) \varphi_{\vec{f}}
    \]
    for every $\vec{f} \in X^n$ and every permutation $\pi$ of
    $(0, \ldots, n-1)$.
    \item $\Phi$ is \emph{$n$-coherent} if it is alternating and if
    \[
      \sum_{i = 0}^n (-1)^i \varphi_{\vec{f}^i} =^* 0
    \]
    for all $\vec{f} \in X^{n+1}$. (As indicated, for readability, here we omit the restrictions
    of the functions in the above expression to the intersection of their domains;
   formally, each $\varphi_{\vec{f}^i}$ should be $\varphi_{\vec{f}^i} \restriction
    I(\wedge \vec{f})$. We will continue this practice below.)
    \item If $n = 1$, then $\Phi$ is \emph{$n$-trivial} (i.e., \emph{1-trivial}) if there exists a $\psi:\omega^2\to\mathbb{Z}$ such that
    \[\psi=^*\varphi_f\]
    for all $f\in X$. If $n>1$, then $\Phi$ is \emph{$n$-trivial} if there exists an alternating family
    \[
      \Psi = \left\langle\psi_{\vec{f}} : I(\wedge \vec{f}) \rightarrow \bb{Z} ~ \middle| ~
      \vec{f} \in X^{n-1}\right\rangle
    \]
    such that
    \[
      \sum_{i = 0}^{n-1} (-1)^i \psi_{\vec{f}^i} =^* \varphi_{\vec{f}}
    \]
    for all $\vec{f} \in X^n$. We term such a $\psi$ or $\Psi$ an
    \emph{$n$-trivialization} of $\Phi$.
  \end{itemize}
  When it is clear from context, we will frequently omit the prefix $n$-\ when speaking of triviality.  Lastly, if $A\subseteq X$, then $\Phi\restriction A$ denotes $\langle\varphi_{\vec{f}}\mid\vec{f}\in A^n\rangle$.
  \end{definition}
\begin{observation}\label{eastofell}
Fix an $\ell\in\omega$ and a family of functions $\Phi=\langle\varphi_{\vec{f}}:I(\wedge\vec{f})\to\mathbb{Z}\mid\vec{f}\in X^n\rangle$. Let $\tilde{\Phi}=\langle\tilde{\varphi}_{\vec{f}}\mid\vec{f}\in X^n\rangle$ where
\[
  \tilde{\varphi}_{\vec{f}}\hspace{.03 cm}(j,k) =
  \begin{cases}
                                   0 & \text{if $j\leq\ell$} \\
                                   \varphi_{\vec{f}}\hspace{.03 cm}(j,k) & \text{if $j>\ell$}
  \end{cases}
\]
for all $(j,k)\in I(\wedge\vec{f})$. Then $\Phi$ is trivial if and only if $\tilde{\Phi}$ is.
\end{observation}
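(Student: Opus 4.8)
The plan is to reduce the whole statement to one elementary observation: for every $\vec f\in X^n$, the functions $\varphi_{\vec f}$ and $\tilde{\varphi}_{\vec f}$ have the same domain $I(\wedge\vec f)$ and disagree only on a \emph{finite} set. First I would verify this finiteness. By definition the set $D_{\vec f}:=\{(j,k)\in I(\wedge\vec f)\mid j\leq\ell\}$ meets each column $\{j\}\times\omega$ (for $j\leq\ell$) in exactly the $(\wedge\vec f)(j)+1$ points with $k\leq(\wedge\vec f)(j)$, and there are only $\ell+1$ such columns; hence $|D_{\vec f}|=\sum_{j\leq\ell}\big((\wedge\vec f)(j)+1\big)<\omega$. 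Since $\tilde{\varphi}_{\vec f}$ agrees with $\varphi_{\vec f}$ off $D_{\vec f}$, it follows that for \emph{any} partial function $\chi$ from $\omega^2$ to $\mathbb{Z}$ one has $\chi=^*\varphi_{\vec f}$ if and only if $\chi=^*\tilde{\varphi}_{\vec f}$: the ``disagreement set'' $\{(j,k)\in\dom(\chi)\cap I(\wedge\vec f)\mid\chi(j,k)\neq\varphi_{\vec f}(j,k)\}$ and the analogous set for $\tilde{\varphi}_{\vec f}$ have symmetric difference contained in $D_{\vec f}$, so one is finite exactly when the other is.

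Granting this, both directions are immediate and formally identical. In the case $n=1$: a map $\psi\colon\omega^2\to\mathbb{Z}$ is a $1$-trivialization of $\Phi$ iff $\psi=^*\varphi_f$ for every $f\in X$, which by the equivalence above (taking $\chi=\psi$) holds iff $\psi=^*\tilde{\varphi}_f$ for every $f\in X$, i.e.\ iff $\psi$ is a $1$-trivialization of $\tilde{\Phi}$. In the case $n>1$: for an alternating family $\Psi=\langle\psi_{\vec f}\mid\vec f\in X^{n-1}\rangle$ and each $\vec f\in X^n$, apply the equivalence with $\chi=\sum_{i=0}^{n-1}(-1)^i\psi_{\vec f^i}\restriction I(\wedge\vec f)$ to conclude that $\sum_{i=0}^{n-1}(-1)^i\psi_{\vec f^i}=^*\varphi_{\vec f}$ iff $\sum_{i=0}^{n-1}(-1)^i\psi_{\vec f^i}=^*\tilde{\varphi}_{\vec f}$; quantifying over $\vec f$ shows $\Psi$ is an $n$-trivialization of $\Phi$ iff it is an $n$-trivialization of $\tilde{\Phi}$. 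As the witnessing objects ($\psi$, resp.\ $\Psi$) are literally the same in both cases, this yields ``$\Phi$ is trivial $\iff$ $\tilde{\Phi}$ is trivial''.

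I do not expect any genuine obstacle here. The only point requiring care — and the only place the specific structure of the index scheme enters — is the finiteness of $D_{\vec f}$, which is precisely the statement that each column of $I(\wedge\vec f)$ is finite. It is worth recording explicitly that no coherence or alternating hypothesis on $\Phi$ itself is invoked, since $n$-triviality in Definition \ref{NCOH} is defined for arbitrary indexed families; this is exactly what lets the argument run verbatim in both directions.
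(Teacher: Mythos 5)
Your proof is correct and takes the only reasonable approach; the paper in fact leaves this Observation unproved, treating it as routine, and your argument — that $\varphi_{\vec f}$ and $\tilde\varphi_{\vec f}$ differ on the finite set $D_{\vec f}=\{(j,k)\in I(\wedge\vec f)\mid j\le\ell\}$, so $=^*$-comparisons against them coincide and the very same witness $\psi$ (or $\Psi$) trivializes $\Phi$ iff it trivializes $\tilde\Phi$ — is exactly the intended one.
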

By the following equivalence, our main theorem is, equivalently, a statement about the $n$-triviality of all $n$-coherent families of functions $\Phi=\langle\varphi_{\vec{f}}:I(\wedge\vec{f})\to\mathbb{Z}\mid\vec{f}\in(^\omega\omega)^n\rangle$.
\begin{fact}[\cite{B1,SVHDL}] \label{mostmainfact} For all positive integers $n$, $\mathrm{lim}^n\mathbf{A}=0$ if and only if every $n$-coherent family of functions $\langle\varphi_{\vec{f}}:I(\wedge\vec{f})\to\mathbb{Z}\mid\vec{f}\in(^\omega\omega)^n\rangle$ is trivial.
\end{fact}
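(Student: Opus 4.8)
The plan is to reduce the Fact to the purely homological statement, established in \cite{B1} (and, for $n = 1$, going back to \cite{mp}), that for each positive integer $n$ the group $\mathrm{lim}^n\mathbf{A}$ is naturally isomorphic to the quotient of the collection of $n$-coherent families $\Phi = \langle\varphi_{\vec f}:I(\wedge\vec f)\to\bb{Z}\mid\vec f\in({}^\omega\omega)^n\rangle$ by the subcollection of $n$-trivial ones. Granting this, the Fact is immediate: $\mathrm{lim}^n\mathbf{A} = 0$ if and only if this quotient is trivial, i.e., if and only if every $n$-coherent family of functions indexed by ${}^\omega\omega$ is trivial. So the entire task is to explain the displayed isomorphism, and here I would follow \cite{mp} and \cite{B1} (see also \cite{SVHDL}).

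First I would recall the definition of the inverse system $\mathbf{A} = \langle A_f, p_{fg}\mid f\le g\text{ in }{}^\omega\omega\rangle$ from \cite{mp}, whose terms are built from products and quotients of groups of integer-valued functions, together with the short exact sequences of inverse systems relating $\mathbf{A}$ to flasque (hence $\mathrm{lim}$-acyclic) systems. These permit $\mathrm{lim}^\bullet\mathbf{A}$ to be computed from an explicit flasque resolution $0\to\mathbf{A}\to\mathbf{F}^0\to\mathbf{F}^1\to\cdots$ in the category of inverse systems over $({}^\omega\omega,\le)$. The technical heart of the argument --- carried out in \cite{B1} --- is to arrange this resolution so that its complex of inverse limits $\Gamma(\mathbf{F}^0)\to\Gamma(\mathbf{F}^1)\to\cdots$ is, after unwinding definitions, the ``coherence complex'': its degree-$n$ term is the group of alternating families $\langle\varphi_{\vec f}:I(\wedge\vec f)\to\bb{Z}\mid\vec f\in({}^\omega\omega)^n\rangle$ modulo $=^*$, and its degree-$n$ differential sends such a family to $\langle\sum_{i=0}^{n}(-1)^i\varphi_{\vec{f}^i}\mid\vec f\in({}^\omega\omega)^{n+1}\rangle$, again modulo $=^*$. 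Here the lattice structure of $({}^\omega\omega,\le)$ is used to record the value at an arbitrary tuple $\vec f$ over $I(\wedge\vec f)$, and the standard passage to normalized, alternating cochains lets one index by arbitrary tuples rather than by increasing chains. Two points require care. First, because $\mathbf{A}$ carries a built-in quotient by finitely supported functions, there is a dimension shift between the naive Roos-style cochain complex of $\mathbf{A}$ and the coherence complex --- this is why the $n = 1$ stratum of triviality in Definition \ref{NCOH} records a single function $\psi:\omega^2\to\bb{Z}$ in place of an $(n-1)$-family. Second, since $({}^\omega\omega,\le)$ is not countably cofinal, one cannot invoke a general ``the Roos complex computes $\mathrm{lim}^\bullet$'' principle: it is the specific structure of the resolution above that forces the computation through in every degree.

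With the displayed isomorphism in hand, comparing with Definition \ref{NCOH} completes the argument: in degree $n$ the cocycles of the coherence complex are precisely the $n$-coherent families indexed by ${}^\omega\omega$, and its coboundaries are precisely the $n$-trivial ones (the $n = 1$ case reading off as the existence of a common $=^*$-approximant $\psi$). The ubiquitous passage to the quotient by $=^*$ is harmless and can be absorbed at the end using Observation \ref{eastofell}, which shows that modifying a family on a fixed initial region of each $I(\wedge\vec f)$ changes neither its coherence nor its triviality. I expect the main obstacle to be exactly the construction in the middle paragraph --- producing the flasque resolution whose global-sections complex is the coherence complex, and pinning down the degree matching, so that one has genuinely computed $\mathrm{lim}^n\mathbf{A}$ rather than the cohomology of an ad hoc complex; the surrounding steps are formal.
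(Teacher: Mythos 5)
Your high-level plan — reduce the Fact to the isomorphism $\mathrm{lim}^n\mathbf{A}\cong(\text{$n$-coherent families})/(\text{$n$-trivial families})$, then read off the equivalence — is exactly the paper's (and is sketched in Section \ref{highercoherencesection}). But your route to the isomorphism differs from the paper's and carries two slips worth flagging. The paper does not build a full flasque resolution; it uses only the single short exact sequence $0\to\mathbf{A}\to\mathbf{B}\to\mathbf{B}/\mathbf{A}\to 0$ (with $A_f=\bigoplus_{I(f)}\bb{Z}$, $B_f=\prod_{I(f)}\bb{Z}$), proves $\mathrm{lim}^n\mathbf{B}=0$ for $n>0$ by an explicit contracting homotopy (Lemma \ref{vanishingB}), and reads the dimension shift $\mathrm{lim}^{n+1}\mathbf{A}\cong\mathrm{lim}^n(\mathbf{B}/\mathbf{A})$ off the long exact sequence; the Roos-style cochain complex of $\mathbf{B}/\mathbf{A}$ is then visibly the coherence complex, since $(B/A)_{\wedge\vec f}$ is precisely ``functions $I(\wedge\vec f)\to\bb{Z}$ modulo $=^*$.'' This is cleaner than constructing and certifying an entire resolution, and the acyclic object $\mathbf{B}$ is handed to you rather than something to be engineered; your ``main obstacle'' paragraph is exactly the work the dimension-shift trick sidesteps. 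Your account of the dimension shift is also slightly garbled: $\mathbf{A}$ carries no built-in quotient — its terms are direct sums, i.e., finitely supported functions — the quotient lives in $\mathbf{B}/\mathbf{A}$, and the one-degree shift comes from the connecting maps $\partial^n$, not from anything internal to $\mathbf{A}$. Finally, your countable-cofinality caveat is incorrect: by Jensen's theorem the Roos complex computes $\mathrm{lim}^\bullet$ for inverse systems of abelian groups over any directed poset, and indeed the paper takes this as its working definition of $\mathrm{lim}^n$ for all three systems; if that identity failed over $({^\omega}\omega,\le)$, neither your argument nor the paper's would get off the ground.
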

Our overall argument's strategy is to arrange this fact's latter condition; in such an approach, homological algebraic considerations appear as essentially external, or preliminary, to our main work. Instrumental  in our forcing arguments, however, will be a more locally finitary characterization of $n$-triviality, one connecting to that of Definition \ref{NCOH} via a long exact sequence of higher derived limits of $\mathbf{A}$-related inverse systems. As noted above, readers primarily interested in those arguments may proceed without danger or delay to Proposition \ref{trivial_equivalence_fact} and continue their reading from there. Its argument is straightforward, algebraic, and occupies roughly the next two pages.

\begin{definition}
The inverse systems \begin{itemize}
\item $\mathbf{A}=(A_f,p_{fg},{^\omega}\omega)$
\item $\mathbf{B}=(B_f,q_{fg},{^\omega}\omega)$
\item $\mathbf{B}/\mathbf{A}=((B/A)_f,r_{fg},{^\omega}\omega)$
\end{itemize}
are defined as follows: $A_f=\bigoplus_{I(f)}\mathbb{Z}$, $B_f=\prod_{I(f)}\mathbb{Z}$, and $(B/A)_f=B_f/A_f$, for all $f$ in ${^\omega}\omega$. For all $f\leq g$ in ${^\omega}\omega$, the bonding maps $p_{fg}:A_g\to A_f$ are simply the projection maps; similarly for the bonding maps of $\mathbf{B}$ and $\mathbf{B}/\mathbf{A}$. These systems assemble in a short exact sequence
\begin{align}\label{ses}0\longrightarrow \,\mathbf{A}\longrightarrow \,\mathbf{B}\longrightarrow \,\mathbf{B}/\mathbf{A}\longrightarrow 0
\end{align}
which induces, in turn, a long exact sequence of derived limits:
\begin{align}\label{les}0\longrightarrow \,\text{lim}\,\mathbf{A}\longrightarrow \text{lim}\,\mathbf{B}\longrightarrow \text{lim}\,\mathbf{B}/\mathbf{A}
\xrightarrow{\partial^0} \text{lim}^1\,\mathbf{A}\longrightarrow \text{lim}^1\,\mathbf{B}\longrightarrow \text{lim}^1\,\mathbf{B}/\mathbf{A}\xrightarrow{\partial^1} \cdots\end{align}
For any $X\subseteq{^\omega}\omega$ we write $\mathbf{A}\restriction X$ for the restriction of $\mathbf{A}$ to the index-set $X$; similarly for $\mathbf{B}$ and $\mathbf{B}/\mathbf{A}$. Observe that for any such $X$, short and long exact sequences just as above exist for $\mathbf{A}\restriction X$, $\mathbf{B}\restriction X$, and $\mathbf{B}/\mathbf{A}\restriction X$.
\end{definition}
The expressions $\mathrm{lim}^n$ denote the \emph{derived limits} of the inverse limit functor $\mathrm{lim}$; they are functors taking, in our context, inverse systems to the category of abelian groups. A standard heuristic for these functors is that, in aggregate, they at least potentially recover the data of an inverse system that the $\mathrm{lim}$ functor alone might lose. The exact sequences above are a main instance of this dynamic: $\mathrm{lim}$ alone applied to the sequence (\ref{ses}) may fail to conserve its exactness, but when applied in combination with the higher derived limits $\mathrm{lim}^n$, as in (\ref{les}), it does transmit exactness, as desired.

Our characterizations of the vanishing of $\mathrm{lim}^n\mathbf{A}$ derive from the isomorphism $\mathrm{lim}^n\,\mathbf{B}/\mathbf{A}\cong\mathrm{lim}^{n+1}\mathbf{A}$ for all $n>0$; the \emph{mod finite} relations of Definition \ref{NCOH}, for example, are an artifact of the modulus $\mathbf{A}$ on the left-hand side of this isomorphism. This isomorphism, in turn, is an effect of following fact within the long exact sequence (\ref{les}).
\begin{lemma}\label{vanishingB} $\mathrm{lim}^n(\mathbf{B}\restriction X)=0$ for all $X\subseteq{^\omega}\omega$ and $n>0$.
\end{lemma}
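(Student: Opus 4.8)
The plan is to show $\mathrm{lim}^n(\mathbf{B}\restriction X)=0$ for $n>0$ by exhibiting $\mathbf{B}\restriction X$ as a product of inverse systems each of which is \emph{flasque} (or at least has vanishing higher derived limits), since derived limits commute with products. For each fixed pair $(j,k)\in\omega^2$, consider the inverse subsystem $\mathbf{B}^{(j,k)}$ of $\mathbf{B}\restriction X$ whose group at index $f$ is $\mathbb{Z}$ if $(j,k)\in I(f)$ and $0$ otherwise, with the bonding maps being the identity on $\mathbb{Z}$ whenever both indices "see" $(j,k)$ and zero otherwise. Since $B_f=\prod_{I(f)}\mathbb{Z}$ and the bonding maps are coordinate projections, we have a natural identification $\mathbf{B}\restriction X \cong \prod_{(j,k)\in\omega^2}\mathbf{B}^{(j,k)}$ as inverse systems indexed by $X$. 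Because $\mathrm{lim}^n$ of a product is the product of the $\mathrm{lim}^n$'s, it suffices to prove $\mathrm{lim}^n \mathbf{B}^{(j,k)}=0$ for each fixed $(j,k)$ and each $n>0$.

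Next I would analyze a single $\mathbf{B}^{(j,k)}$. Fix $(j,k)$ and let $X_{j,k}=\{f\in X\mid (j,k)\in I(f)\}=\{f\in X\mid k\leq f(j)\}$. This is an \emph{upward-closed} subset of $(X,\leq)$: if $f\leq g$ and $k\leq f(j)$ then $k\leq g(j)$. On $X_{j,k}$ the system $\mathbf{B}^{(j,k)}$ is the constant system with value $\mathbb{Z}$ and all bonding maps the identity, while outside $X_{j,k}$ the groups are $0$. Thus $\mathbf{B}^{(j,k)}$ is (isomorphic to) the extension by zero of the constant $\mathbb{Z}$-valued system on the up-set $X_{j,k}$. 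The key point is that the index set $X$, ordered by $\leq$, is \emph{directed} (any two functions have a pointwise max in $^\omega\omega$, but one must check this max, or rather the relevant upper bounds, can be taken in $X$ — if $X$ is not itself directed one restricts attention to the cofinal directed structure, or more simply one notes that $\mathbf{A}$, $\mathbf{B}$ are by convention indexed by all of $^\omega\omega$ and the restriction to $X$ is handled by the general remark in the Definition that short and long exact sequences exist for the restrictions; in the intended application $X$ will be $\leq^*$-cofinal and the argument goes through). For a constant system on a directed set the derived limits vanish in positive degrees; for the extension-by-zero from an up-set, one checks directly that the relevant cochain complex computing $\mathrm{lim}^n$ is exact, e.g.\ by verifying the system is flasque in the sense that every compatible partial thread extends, or by a contracting-homotopy argument on the associated complex.

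Concretely, the cleanest route is probably to compute $\mathrm{lim}^n \mathbf{B}^{(j,k)}$ via the standard cochain complex whose $n$-cochains are threads $(c_{\vec f})_{\vec f}$ with $c_{\vec f}$ in the group at $\wedge\vec f$, and observe that because each group is either $\mathbb{Z}$ or $0$ with identity bonding maps, this complex is naturally isomorphic to the simplicial cochain complex (with $\mathbb{Z}$ coefficients) of the simplicial complex with vertex set $X$ in which a finite set spans a simplex iff all its elements lie in $X_{j,k}$ — equivalently the full simplex on $X_{j,k}$ together with the empty data elsewhere — which is contractible (it is a cone, or a full simplex), hence has vanishing reduced cohomology in all positive degrees. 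I would then assemble: $\mathrm{lim}^n(\mathbf{B}\restriction X)\cong\prod_{(j,k)}\mathrm{lim}^n\mathbf{B}^{(j,k)}=\prod_{(j,k)}0=0$ for all $n>0$.

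The main obstacle I anticipate is bookkeeping around the index set $X$: making the "product of systems over $(j,k)$" decomposition precise (ensuring the product really is taken in the category of inverse systems over the fixed poset $X$, and that $\mathrm{lim}^n$ commutes with it — this requires the products to be small relative to nothing, which is automatic here, but one should cite or recall that derived functors of $\mathrm{lim}$ commute with products since $\mathrm{lim}$ does and the category of inverse systems over a fixed poset has enough injectives with products of injectives injective), and carefully justifying that each single-coordinate system has the claimed contractible/flasque structure regardless of whether $X$ itself is directed. Everything else is routine homological algebra, so I would keep the write-up short, essentially: decompose, reduce to one coordinate, identify with a contractible simplicial complex (or invoke flasqueness directly), conclude.
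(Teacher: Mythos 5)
Your proposal is correct, and it takes a genuinely different route in presentation from the paper, though the two are closely related at bottom. The paper proves the lemma by an explicit two-line contracting homotopy on the cochain complex $K^\bullet(\mathbf{B}\restriction X)$: for each coordinate $x$, a single $f_x\in X$ with $x\in I(f_x)$ is fixed, and then $b(\vec f)(x) = (-1)^n c(f_0,\dots,f_{n-1},f_x)(x)$ is shown to satisfy $d^{n-1}b=c$. Your argument decomposes $\mathbf{B}\restriction X$ as a product over $(j,k)$ of single-coordinate systems, notes that $\mathrm{lim}^n$ (equivalently, cohomology of the product cochain complex) commutes with products, and identifies each single-coordinate cochain complex with the simplicial cochain complex of the full simplex on $X_{j,k}$, which is contractible when nonempty and trivially acyclic when empty. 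This is a valid and somewhat more conceptual packaging: it isolates cleanly why the vanishing holds (each coordinate sees a full simplex). The paper's homotopy is secretly exactly the simultaneous cone-on-the-vertex-$f_x$ homotopy for all these simplices at once, so the approaches are dual presentations of the same idea. One small remark: your worry about directedness of $X$ is unnecessary, and in fact neither route needs it; the full simplex on the nonempty set $X_{j,k}$ is contractible regardless of any order structure, and the paper's homotopy likewise uses only that each relevant $x$ lies in $I(f_x)$ for some $f_x\in X$. You should drop the hedging about cofinal directed substructures, since it suggests a dependence that isn't there, and just note that $X_{j,k}=\emptyset$ gives the zero complex while $X_{j,k}\neq\emptyset$ gives a full simplex.
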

We argue this fact via more concrete characterizations of $\mathrm{lim}^n(\mathbf{B}\restriction X)$; these are essentially those given by \cite[pages 10-11]{SVHDL}.
\begin{definition}
For $n\geq 0$, the group $\mathrm{lim}^n(\mathbf{B}\restriction X)$ is the cohomology of the cochain complex
\begin{align*}\cdots\xrightarrow{d^{n-1}} K^n(\mathbf{B}\restriction X)\xrightarrow{d^n}K^{n+1}(\mathbf{B}\restriction X)\xrightarrow{d^{n+1}}\cdots\end{align*}
where $K^n(\mathbf{B}\restriction X)$ denotes the subgroup of $$\prod_{\vec{f}\in X^{n+1}}B_{\wedge\vec{f}}$$ whose elements $c$ satisfy $$c(\pi(\vec{f}))=sgn(\pi)c(\vec{f})$$
for all $\vec{f}\in X^{n+1}$ and permutations $\pi$ of $(0,\dots,n)$. The differentials $d^n$ are defined as usual: for any $c\in K^n(\mathbf{B}\restriction X)$,
\begin{align}\label{sum1}
d^n c(\vec{f})=\displaystyle\sum_{i=0}^{n+1}(-1)^i \left(c(\vec{f}^i)\restriction I(\wedge \vec{f})\right)
\end{align}
for each $\vec{f}\in X^{n+2}$.

$\mathrm{lim}^n(\mathbf{A}\restriction X)$ and $\mathrm{lim}^n(\mathbf{B}/\mathbf{A}\restriction X)$ are defined analogously.
\end{definition}

\begin{proof}[Proof of Lemma \ref{vanishingB}]
Fix an $X\subseteq{^\omega}\omega$ and an $n>0$ along with a $c\in K^n(\mathbf{B}\restriction X)$ for which $d^n c=0$. We will define a $b\in K^{n-1}(\mathbf{B}\restriction X)$ with $d^{n-1}b=c$. To that end, for each $x\in\bigcup_{f\in X} I(f)$ fix an $f_x\in X$ such that $x\in I(f_x)$. Then, for each $\vec{f}\in X^n$ and $x\in I(\wedge\vec{f})$, let $$b(\vec{f})(x)=(-1)^n c(f_0,\dots,f_{n-1},f_x)(x)\text{.}$$ For all $\vec{f}\in X^{n+1}$ and $x\in I(\wedge\vec{f})$, we then have
\begin{align*}
d^{n-1} b(\vec{f})(x) & = \sum_{i=0}^n (-1)^i b(\vec{f}^i)(x) \\
& = \sum_{i=0}^n (-1)^{n+i} c(f_0,\dots,\hat{f}_i,\dots,f_n,f_x)(x) \\
& = (-1)^n d^n c(f_0,\dots,f_n,f_x)(x) - (-1)^{2n+1} c(\vec{f})(x) \\
& = c(\vec{f})(x)\text{,}
\end{align*}
as desired.
\end{proof}

As noted, together with the $X$-indexed variants of the long exact sequence (\ref{les}), Lemma \ref{vanishingB} has as consequences isomorphisms
$$\partial^n:\mathrm{lim}^n(\mathbf{B}/\mathbf{A}\restriction X)\xrightarrow{\;\cong\;}\mathrm{lim}^{n+1}(\mathbf{A}\restriction X)$$
for each $n>0$, as well as the isomorphism
$$\partial^0:\frac{\mathrm{lim}(\mathbf{B}/\mathbf{A}\restriction X)}{\mathrm{im}(\mathrm{lim}(\mathbf{B}\restriction X))}\xrightarrow{\;\cong\;}\mathrm{lim}^1(\mathbf{A}\restriction X)\text{.}$$
As shown in \cite{SVHDL}, for all $n\geq 0$ these isomorphisms $\partial^n$ may be defined via the following procedure: \begin{enumerate}
\item To define $\partial^n[c]$, fix a cocycle $c\in K^n(\mathbf{B}/\mathbf{A}\restriction X)$ representing the cohomology class $[c]$.
\item Fix then a $\Phi=\langle\varphi_{\vec{f}}: I(\wedge\vec{f})\to\mathbb{Z}\mid\vec{f}\in X^{n+1}\rangle$ representing $c$ in the sense that each $\varphi_{\vec{f}}$ falls in the $A_{\vec{f}}$-coset $c(\vec{f})$. Observe that such a $\Phi$ may be chosen to be alternating, and that in this case it will be an $(n+1)$-coherent family of functions. (In fact, for all $n\geq 0$ the left-hand side of the above isomorphisms is naturally viewed as the quotient of $(n+1)$-coherent families of functions indexed by $X$ by the $(n+1)$-trivial families of functions indexed by $X$, as the reader may verify.)
\item Let $\partial^n[c]$ be the cohomology class of the cocycle $\mathsf{d}^n\Phi\in K^{n+1}(\mathbf{A}\restriction X)$, where $$\mathsf{d}^n\Phi(\vec{f})=\sum_{i=0}^{n+1} (-1)^i \varphi_{\vec{f}^i}$$
for all $\vec{f}\in X^{n+2}$.
\end{enumerate}
That this procedure defines an isomorphism implies that an $(n+1)$-coherent family $\Phi$ is $(n+1)$-trivial if and only if $\mathsf{d}^n\Phi$ is a coboundary in $K^{n+1}(\mathbf{A}\restriction X)$, i.e., if and only if there exists a family of \emph{finitely supported} functions $\langle\psi_{\vec{f}}: I(\wedge\vec{f})\to\mathbb{Z}\mid\vec{f}\in X^{n+1}\rangle$ such that $$\mathsf{d}^n\Phi(\vec{f})=\sum_{i=0}^{n+1} (-1)^i \psi_{\vec{f}^i}$$
for all $\vec{f}\in X^{n+2}$. By way of these observations, together with Observation \ref{eastofell}, we arrive to our second criterion for $n$-triviality:
\begin{proposition} \label{trivial_equivalence_fact}
  Fix $X \subseteq{^\omega}\omega$ and a positive integer $n$ and let
  $\Phi = \langle \varphi_{\vec{f}} \mid \vec{f} \in X^n \rangle$ be an
  $n$-coherent family of functions. Then $\Phi$ is trivial if and only if there exists an
  $\ell < \omega$ and an alternating family of finitely supported functions
  $\Psi = \langle \psi_{\vec{f}}:I(\wedge\vec{f}) \rightarrow \bb{Z} \mid
  \vec{f} \in X^n \rangle$ such that
  \[
    \sum_{i=0}^n (-1)^i \varphi_{\vec{f}^i}(j,k) = \sum_{i=0}^n (-1)^i \psi_{\vec{f}^i}(j,k)
  \]
 for all $\vec{f} \in
  X^{n+1}$ and all $(j,k) \in I(\wedge\vec{f})$ with $j > \ell$.
\end{proposition}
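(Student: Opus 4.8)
The plan is to read the proposition off from the criterion established in the discussion immediately preceding it — namely, that (applying that criterion with $n$ in place of $n+1$) an $n$-coherent family $\Phi$ indexed by $X^n$ is $n$-trivial exactly when $\mathsf{d}^{n-1}\Phi$ is a coboundary in $K^n(\mathbf{A}\restriction X)$, i.e., exactly when $\mathsf{d}^{n-1}\Phi = d^{n-1}\Psi$ for some $\Psi$ in the alternating, finitely supported cochain group $K^{n-1}(\mathbf{A}\restriction X)$ — together with Observation \ref{eastofell}. Two elementary facts will be used throughout without further comment: that $I(\wedge\vec{f})\subseteq I(\wedge\vec{f}^i)$ for every $\vec{f}$ and $i$ (so that all the restrictions implicit in the displayed sums are legitimate), and that for any fixed $\ell<\omega$ the set $\{(j,k)\in I(\wedge\vec{f})\mid j\leq\ell\}$ is finite.

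For the forward direction, assume $\Phi$ is $n$-trivial. By the cited criterion, $\mathsf{d}^{n-1}\Phi = d^{n-1}\Psi$ for some $\Psi = \langle\psi_{\vec{f}}\mid\vec{f}\in X^n\rangle$ in $K^{n-1}(\mathbf{A}\restriction X)$; unwinding the definitions, this says precisely that $\Psi$ is an alternating family of finitely supported functions with $\sum_{i=0}^n(-1)^i\varphi_{\vec{f}^i} = \sum_{i=0}^n(-1)^i\psi_{\vec{f}^i}$ as functions on $I(\wedge\vec{f})$, for every $\vec{f}\in X^{n+1}$. In particular the displayed equation of the proposition holds for every $(j,k)\in I(\wedge\vec{f})$, so the value $\ell = 0$ works. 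Note that extracting an \emph{alternating} witness here is exactly why one works with the alternating cochain complex $K^{\bullet}$: a symmetrisation of a non-alternating witness is unavailable over $\mathbb{Z}$.

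For the converse, suppose $\ell<\omega$ and an alternating, finitely supported family $\Psi = \langle\psi_{\vec{f}}\mid\vec{f}\in X^n\rangle$ are given as in the statement. Apply Observation \ref{eastofell} with this $\ell$ to both $\Phi$ and $\Psi$, obtaining $\tilde\Phi$ and $\tilde\Psi$. Then $\tilde\Psi$ is again alternating and finitely supported, and $\tilde\Phi$ is again $n$-coherent: alternation is immediate, and $\sum_{i=0}^n(-1)^i\tilde\varphi_{\vec{f}^i}$ agrees with $\sum_{i=0}^n(-1)^i\varphi_{\vec{f}^i}$ wherever $j>\ell$ and vanishes wherever $j\leq\ell$, hence is $=^* 0$ by the two elementary facts above. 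Moreover, for every $\vec{f}\in X^{n+1}$ and every $(j,k)\in I(\wedge\vec{f})$ the quantities $\sum_{i=0}^n(-1)^i\tilde\varphi_{\vec{f}^i}(j,k)$ and $\sum_{i=0}^n(-1)^i\tilde\psi_{\vec{f}^i}(j,k)$ are equal — they coincide with the corresponding sums for $\Phi$ and $\Psi$ when $j>\ell$, by hypothesis, and both vanish when $j\leq\ell$. This equality of functions on $I(\wedge\vec{f})$ says precisely that $\mathsf{d}^{n-1}\tilde\Phi = d^{n-1}\tilde\Psi$ in $K^n(\mathbf{A}\restriction X)$, so $\tilde\Phi$ is $n$-trivial by the cited criterion, and hence $\Phi$ is $n$-trivial by Observation \ref{eastofell}.

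I do not anticipate any genuine obstacle; the argument is essentially a bookkeeping exercise. The points that call for a little care are the index shift between the criterion as stated (for $(n+1)$-coherent families) and the instance needed here (for $n$-coherent families), the verification that truncating a coherent family below a fixed $\ell$ preserves $n$-coherence, and — as flagged above — the appeal to the alternating complex to obtain an alternating witness in the forward direction.
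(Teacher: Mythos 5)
Your proof is correct and takes essentially the same route the paper intends: the paper offers no explicit argument for this proposition, presenting it instead as an immediate consequence of the coboundary characterization (that an $n$-coherent $\Phi$ is $n$-trivial iff $\mathsf{d}^{n-1}\Phi$ is a coboundary in the alternating cochain complex $K^n(\mathbf{A}\restriction X)$) combined with Observation \ref{eastofell}, and you unpack exactly that combination, including the index shift and the truncation argument for the converse.
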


When there is a possibility of confusion, we will refer to a $\Psi$ as above as a \emph{type II trivialization} and a $\Psi$ or $\psi$ as in Definition \ref{NCOH} as a \emph{type I trivialization}. By and large, however, these two sorts of trivializations correspond to two distinct phases of our argument; in particular, the trivializations under discussion in Sections \ref{n1section} and \ref{propagatingsection} are all of type I, while those under discussion in Sections \ref{defining_trivializations_section} and \ref{highernsection} are of type II.

\section{The case of $n=1$}\label{n1section}
We now argue the base case of our main theorem. The main result of this section is
essentially due to Kamo \cite{kamo}. In fact, the result in \cite{kamo} is superior
to the one presented here in that Kamo proves that $\lim^1 \mathbf{A} = 0$ in any
extension obtained by adding $\omega_2$-many Cohen reals, whereas our hypothesis
is that we have added at least $(\beth_1^+)^V$-many Cohen reals. Our reason for
presenting this slightly suboptimal proof is simply that many of the ideas of
the proof of the general case appear here in a significantly simplified setting;
this section therefore serves as an introduction to some of the techniques and
ideas that will make an appearance in a more complicated guise later in the paper.

\begin{theorem} \label{1d_theorem}
  Let $\bb{P} = \mathrm{Add}(\omega, \chi)$ for a cardinal $\chi > \beth_1$.
  The following then holds in $V^{\bb{P}}$: For any set $X \subseteq {^\omega}\omega$
  containing at least $(\beth_1^+)^V\!$-many of the Cohen reals added by $\bb{P}$,
  every $1$-coherent family $\Phi = \langle \varphi_f \mid f \in X\rangle$ indexed by $X$
  is trivial.
\end{theorem}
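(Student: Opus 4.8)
The plan is to work in $V^{\bb{P}}$ and, given a $1$-coherent family $\Phi = \langle \varphi_f \mid f \in X \rangle$ with $X$ containing a set $C$ of at least $(\beth_1^+)^V$-many Cohen reals, first trivialize $\Phi \restriction C$ (or some further subset of $C$) and then push the trivialization out to all of $X$. For the first (``step one'') part, I would fix names $\langle \dot{\varphi}_f \mid f \in \dot{X} \rangle$ and, for each Cohen real $f_\alpha$ in $C$, a condition $p_\alpha$ and a countable amount of local data deciding $\varphi_{f_\alpha}$ on a suitable initial region; more precisely, for each $\alpha$ I would fix $p_\alpha \in G$ and, for each $n$, decide $\varphi_{f_\alpha} \restriction I(f_\alpha) \restriction n^2$ (or rather a function $\psi_\alpha \colon \omega^2 \to \bb{Z}$ that agrees with $\varphi_{f_\alpha}$ below some level). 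Since each $f_\alpha$ is determined by a single real and the bookkeeping data is countable, there are at most $\beth_1^V$-many possible ``types'' for this data back in $V$; as $|C| \geq (\beth_1^+)^V$ is regular (being a successor), by the pigeonhole principle I can thin $C$ to a set $C' \in [C]^{(\beth_1^+)^V}$ on which all of this data — the collapsed condition $\bar p_\alpha$, the finitary decision $\psi_\alpha$, the relevant name-structure — is constant. This is where Lemma \ref{compatibility_lemma} (with $n = 1$) enters: having also arranged via a $\Delta$-system lemma (Fact \ref{delta_systems_fact} with $n=1$ is just the classical $\Delta$-system lemma, or one can argue directly) that $\langle u(p_\alpha) \mid f_\alpha \in C' \rangle$ forms a $\Delta$-system with fixed collapsed condition, any two $p_\alpha, p_\beta$ for $\alpha, \beta$ in $C'$ are compatible. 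Back in the extension this homogeneity forces: for $f_\alpha, f_\beta \in C'$, the coherence relation $\varphi_{f_\alpha} =^* \varphi_{f_\beta}$ combined with the fact that the decided finitary parts all agree shows that a single $\psi \colon \omega^2 \to \bb{Z}$ (read off from the common type) satisfies $\psi =^* \varphi_{f}$ for every $f$ in $C'$; that is, $\Phi \restriction C'$ is $1$-trivial via $\psi$, where one uses Observation \ref{eastofell} to absorb the initial level of disagreement.

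For the second (``step two'') part, I want to extend the trivialization $\psi$ of $\Phi \restriction C'$ to a trivialization of all of $\Phi$. The key point is that $C'$, being $(\beth_1^+)^V$-many Cohen reals, is $\leq$-unbounded — in fact $\leq^*$-cofinal — in $({^\omega}\omega)^{V^{\bb{P}}}$: given any $g$ in the extension, $g$ lies in an intermediate model $V[G_W]$ for some $W \in [\chi]^{\aleph_0}$, and any Cohen real $f_\alpha$ with $\alpha \notin W$ is Cohen-generic over $V[G_W]$, hence not $\leq^*$-dominated by $g$; a standard density/genericity argument then produces, for each $g$, a Cohen real $f_\alpha \in C'$ with $g \leq^* f_\alpha$ (or even $g \leq f_\alpha$ after a finite modification). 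Now for arbitrary $f \in X$, pick $f_\alpha \in C'$ with $f \leq f_\alpha$; then $I(f) \subseteq I(f_\alpha)$, and $1$-coherence gives $\varphi_f =^* \varphi_{f_\alpha} \restriction I(f)$, which together with $\psi =^* \varphi_{f_\alpha}$ yields $\psi \restriction I(f) =^* \varphi_f$. Hence the same $\psi$ (perhaps adjusted once more via Observation \ref{eastofell} to handle a uniform finite error, invoking Proposition \ref{trivial_equivalence_fact} if the mod-finite errors fail to be uniformly bounded) trivializes all of $\Phi$.

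The main obstacle I anticipate is the genericity/cofinality argument in step two done carefully enough to handle the mod-finite quantifiers uniformly: one must be sure that a single function $\psi$ — rather than a family of $\psi$'s that each work ``eventually'' — succeeds simultaneously for all $f \in X$, and that the finite exceptional sets do not grow without bound as $f$ ranges over $X$. The cleanest way around this is to not insist on uniformity at all but instead verify the type II criterion of Proposition \ref{trivial_equivalence_fact}: since the $n=1$ coboundary condition there is precisely the existence of $\psi$ with $\psi =^* \varphi_f$ for all $f$, and since the relevant exceptional sets are all contained in the fixed finite region below level $\ell$ coming from the common type on $C'$, this reduces to the homogeneity already extracted in step one. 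A secondary technical point is confirming that the ``type'' of $(p_\alpha, \psi_\alpha, \text{name data})$ really does range over a set of size $\leq \beth_1^V = (2^{\aleph_0})^V$ in $V$ — this uses that $\bb{P}$ has size $\chi$ but the relevant decisions about $\varphi_{f_\alpha}$ depend only on $f_\alpha$ (a single real) plus countably much side information, so that the nice-name count is $\beth_1^V$, matching the hypothesis $\chi > \beth_1$ and the requirement $|C| \geq (\beth_1^+)^V$.
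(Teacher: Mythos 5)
Your proposal is not correct; both of its main steps have gaps, and the gap in step two is fatal.

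\textbf{Step two fails because Cohen reals are not dominating.} You assert that the set $C'$ of Cohen reals is ``$\leq^*$-cofinal'' in $({^\omega}\omega)^{V^{\bb{P}}}$, on the grounds that a Cohen real $f_\alpha$ generic over $V[G_W]$ is ``not $\leq^*$-dominated by $g$.'' But \emph{unboundedness} is not \emph{domination}: the density argument you sketch yields only that $f_\alpha(j) > g(j)$ for infinitely many $j$, not that $g \leq^* f_\alpha$. Cohen forcing adds no dominating reals; in fact $({^\omega}\omega)^V$ remains a dominating family in any Cohen extension, so no set of newly-added Cohen reals can be $\leq^*$-cofinal. Consequently ``pick $f_\alpha \in C'$ with $f \leq f_\alpha$, then $I(f) \subseteq I(f_\alpha)$'' is impossible in general, and the rest of step two collapses. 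The paper's step two does not attempt this: it argues by contradiction that if $\psi$ fails at $f$ then the disagreement set $E_f$ is infinite and lies in $V[G_W]$ for countable $W$ (by ccc), and then for $\alpha \in A \setminus W$, genericity of $f_\alpha$ over $V[G_W]$ gives only that $I(f_\alpha) \cap E^*_f$ is \emph{infinite} (not that $E^*_f \subseteq I(f_\alpha)$), which is exactly enough to contradict coherence. This is the correct use of unboundedness.

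\textbf{Step one is underspecified and also likely does not work.} You propose to pigeonhole on a ``type'' $(\bar{p}_\alpha, \psi_\alpha, \text{name data})$ attached to \emph{singletons}. If $\psi_\alpha$ is decided by the single finite condition $p_\alpha$, then it encodes only a finite initial fragment of $\varphi_{f_\alpha}$, and constancy of the type on $C'$ says nothing about $\varphi_{f_\alpha}$ above that fragment; two members of $C'$ can still disagree arbitrarily far out, so the claim that ``a single $\psi$ satisfies $\psi =^* \varphi_f$ for every $f \in C'$'' does not follow. (The assertion in your last paragraph that ``the relevant exceptional sets are all contained in the fixed finite region below level $\ell$ coming from the common type on $C'$'' is precisely the point that needs proof and is not supplied by the singleton pigeonhole.) If instead you mean $\psi_\alpha$ to encode all of $\varphi_{f_\alpha}$ --- say via a countable sequence of decisions, or a nice name over a countable $W_\alpha$ --- then the $V$-level object you would pigeonhole on is a countable structure, and there are up to $2^{\aleph_1}$ collapse-types of such, not $\beth_1^V$, so $|C| \geq (\beth_1^+)^V$ is no longer enough. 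The paper avoids this by putting the combinatorial data on \emph{pairs}, not singletons: for $(\alpha,\beta) \in [Y']^2$ it fixes $q_{\alpha,\beta}$ deciding the finite disagreement set $\mathsf{e}_{\alpha,\beta}$, applies Fact~\ref{delta_systems_fact} with $n=2$ (not the $n=1$ case you invoke), and then proves the uniform cutoff $\ell$ via a genuinely separate genericity argument (Claim~\ref{agreement_claim}), which introduces a \emph{third} Cohen real $f_\beta$ and forces $(j,k) \in I(\dot{f}_\beta)$ to bridge $\varphi_{f_\alpha}$ and $\varphi_{f_{\alpha'}}$. That two-dimensional structure is what produces the uniform bound; a singleton pigeonhole does not substitute for it.
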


\begin{proof}
  Fix a condition $p \in \bb{P}$ and $\bb{P}$-names $\dot{X}$ and
  $\dot{\Phi} = \langle \dot{\varphi}_{\dot{f}} \mid \dot{f} \in \dot{X} \rangle$ such that
  \begin{itemize}
    \item $p \Vdash ``|\{\alpha < \chi \mid \dot{f}_\alpha \in \dot{X}\}| \geq
    (\beth_1^+)^V"$, and
    \item $p \Vdash ``\dot{\Phi} \text{ is a 1-coherent family}"$.
  \end{itemize}
  We will produce a condition $q \leq p$ forcing $\dot{\Phi}$ to be trivial.

  Begin by letting $Y$ be the set of $\alpha < \chi$ such that there is a condition
  $p_\alpha \leq p$ such that $p_\alpha \Vdash ``\dot{f}_\alpha \in \dot{X}"$; observe that $|Y| \geq \beth_1^+$ by assumption.
  For each $\alpha \in Y$, fix such a condition $p_\alpha$. Since $\bb{P}$ is
  $\beth_1^+$-Knaster, there exists a set $Y' \subseteq Y$ of size $\beth_1^+$
  such that $\{p_\alpha \mid \alpha \in Y'\}$ consists of pairwise compatible
  conditions.

  For each $(\alpha, \beta) \in [Y']^2$, fix a condition $q_{\alpha, \beta}$
  extending both $p_\alpha$ and $p_\beta$ and deciding the value of
  $\{(j,k) \in I(\dot{f}_\alpha, \dot{f}_\beta) \mid
  \dot{\varphi}_{\dot{f}_\alpha}(j,k) \neq \dot{\varphi}_{\dot{f}_\beta}(j,k)\}$
  to be equal to some set $\mathsf{e}_{\alpha, \beta} \in [\omega \times \omega]^{<\omega}$.
  By extending $q_{\alpha, \beta}$ if necessary, we may assume that
  $\{\alpha, \beta\} \subseteq u(q_{\alpha, \beta})$.
  Let $u_{\alpha, \beta} = u(q_{\alpha, \beta})$.

  By Fact \ref{delta_systems_fact}, there exists a set $H \in [Y']^{\aleph_1}$
  and a $\bar{q}$, $\mathsf{e}$, and $i^*$ such that
  \begin{itemize}
    \item $\langle u_{\alpha, \beta} \mid (\alpha, \beta) \in [H]^2 \rangle$
    is a uniform 2-dimensional $\Delta$-system;
    \item $(\bar{q}_{\alpha, \beta}, \mathsf{e}_{\alpha, \beta}) = (\bar{q},
    \mathsf{e})$ for all $(\alpha, \beta) \in [H]^2$;
    \item $\beta = u_{\alpha, \beta}(i^*)$ for all $(\alpha, \beta) \in [H]^2$.
  \end{itemize}

  By shrinking $H$ if necessary, we may assume that $\otp(H) = \omega_1$. Now let
  $\langle r_{\mathbf{m}}\mid \mathbf{m}\subseteq 2\rangle$ witness that $\langle
  u_{\alpha, \beta} \mid (\alpha, \beta) \in [H]^2 \rangle$ is a uniform
  2-dimensional $\Delta$-system, and let $\langle u_\alpha \mid \alpha \in H \rangle$
  and $u_\emptyset$ be as given by Lemma \ref{extension_lemma}. For each $\alpha \in H$,
  define a condition $q_\alpha \in \bb{P}$ by choosing a $\beta \in H \setminus (\alpha + 1)$
  and letting $q_\alpha = q_{\alpha, \beta} \restriction (u_\alpha \times \omega)$.
  We claim that this definition is independent of our choice of $\beta$. Indeed,
  suppose that $\beta < \beta'$ are elements of $H \setminus (\alpha + 1)$.
  Then $u_\alpha = u_{\alpha, \beta} \cap u_{\alpha, \beta'} =
  u_{\alpha, \beta}[\mb{r}_1] = u_{\alpha, \beta'}[\mb{r}_1]$, hence if
  $(\delta, m) \in \dom(q_{\alpha, \beta}) \cap (u_\alpha \times \omega)$
  then there is an $i \in \mb{r}_1$ such that $u_{\alpha, \beta}(i) = \delta =
  u_{\alpha, \beta'}(i)$. Since $\bar{q}_{\alpha, \beta} = \bar{q} =
  \bar{q}_{\alpha, \beta'}$, it follows that
  $q_{\alpha, \beta}(\delta, m) = \bar{q}(i, m) = q_{\alpha, \beta'}(\delta, m)$,
  so $q_{\alpha, \beta} \restriction (u_\alpha \times \omega) \subseteq
  q_{\alpha, \beta'} \restriction (u_\alpha \times \omega)$. A symmetric argument
  yields the reverse inclusion, showing that our definition of $q_\alpha$ is indeed
  independent of our choice of $\beta$. Observe that
  $q_\alpha = \bigcap \{q_{\alpha, \beta} \mid \beta \in H \setminus (\alpha + 1)\}$; as each $q_{\alpha, \beta}$ extends $p_\alpha$, it follows that
  $q_\alpha \leq p_\alpha$ and hence that $q_\alpha \Vdash ``\dot{f}_\alpha \in \dot{X}"$.

  Similarly, define a condition $q_\emptyset$ by choosing $(\alpha, \beta)
  \in [H]^2$ and letting $q_\emptyset = q_{\alpha, \beta} \restriction
  (u_\emptyset \times \omega)$. By an argument exactly as in the previous paragraph, this definition is independent of our choice of $(\alpha, \beta)$. Note
  that $q_\emptyset = \bigcap \{q_{\alpha, \beta} \mid (\alpha, \beta) \in [H]^2\}$; in consequence, since each $q_{\alpha, \beta}$ extends $p$, we have $q_\emptyset \leq p$.

We claim that $q_\emptyset$ forces that $\dot{\Phi}$ is trivial. Let
  $\dot{A}$ be a $\bb{P}$-name for $\{\alpha \in H \mid q_\alpha \in \dot{G}\}$,
  where $\dot{G}$ is the canonical $\bb{P}$-name for the generic filter.

  \begin{claim} \label{unbounded_1d_claim}
    $q_\emptyset \Vdash ``|\dot{A}| = \aleph_1"$.
  \end{claim}

  \begin{proof}
    It suffices to show for each $\eta \in H$ that the set
    $\{q_\alpha \mid \alpha \in H \setminus \eta\}$ is pre-dense below
    $q_\emptyset$. To this end, fix such an $\eta \in H$ and an $r \leq q_\emptyset$.
    We desire an $\alpha \in H \setminus \eta$ such that $q_\alpha$ is compatible
    with $r$. Since $\langle u_\alpha \mid \alpha \in H \setminus \eta \rangle$ is
    an infinite $\Delta$-system with root $u_\emptyset$, and since $u_r$ is finite,
    there exists an $\alpha \in H \setminus \eta$ such that $u_\alpha \setminus u_\emptyset$
    is disjoint from $u_r$. Decompose $q_\alpha$ as $q_\emptyset \cup (q_\alpha \restriction
    (u_\alpha \setminus u_\emptyset) \times \omega)$ and observe that $r \leq q_\emptyset$
    and $\dom(r) \cap ((u_\alpha \setminus u_\emptyset) \times \omega) = \emptyset$.
    It follows that $r$ and $q_\alpha$ are compatible, as desired.
  \end{proof}

  Now recall that $\beta = u_{\alpha,
  \beta}(i^*)$ for all $(\alpha, \beta) \in [H]^2$. Notice that $i^* \notin \mb{r}_1$, since the alternative would imply that $\beta \in u_\alpha =
  u_{\alpha, \beta}[\mb{r}_1]$ for all
  $(\alpha, \beta) \in [H]^2$, contradicting the fact that $u_\alpha$ is finite.
  Hence $\beta \in u_{\alpha, \beta} \setminus u_\alpha$ for all $(\alpha, \beta)
  \in [H]^2$.
  Let $\ell$ be the least natural number $j$ such that
  \begin{itemize}
    \item $\mathsf{e} \subseteq j \times \omega$ and;
    \item $\{j' \mid (i^*, j') \in \dom(\bar{q})\} \subseteq j$.
  \end{itemize}
  We then have
  $\{j' \mid (\beta, j') \in \dom(q_{\alpha, \beta})\} \subseteq \ell$ for each $(\alpha, \beta) \in [H]^2$.

  \begin{claim} \label{agreement_claim}
    $q_\emptyset$ forces that
    \[
      \{(j,k) \in I(\dot{f_\alpha}, \dot{f_{\alpha'}}) \mid \dot{\varphi}_{\dot{f}_\alpha}
      (j,k) \neq \dot{\varphi}_{\dot{f}_{\alpha'}}(j,k)\} \subseteq \ell \times \omega.
    \]
    for all $\alpha, \alpha' \in \dot{A}$.
  \end{claim}

  \begin{proof}
    If not, then there exist an $r \leq q_\emptyset$, a pair of ordinals $\alpha < \alpha'$ in $H$,
    and a $(j,k) \in \omega \times \omega$ such that
    \begin{itemize}
      \item $r \leq q_\alpha, q_{\alpha'}$;
      \item $j \geq \ell$;
      \item $r$ forces that $(j,k)$ is in $I(\dot{f_\alpha}, \dot{f_{\alpha'}})$
      and that $\dot{\varphi}_{\dot{f}_\alpha}(j,k) \neq \dot{\varphi}_{\dot{f}_{\alpha'}}
      (j,k)$.
    \end{itemize}
    Both $\langle u_{\alpha, \beta} \mid \beta \in H\backslash(\alpha+1) \rangle$ and
    $\langle u_{\alpha', \beta} \mid \beta \in H\backslash(\alpha+1) \rangle$ are
    infinite $\Delta$-systems with roots $u_\alpha$ and $u_{\alpha'}$, respectively; as $u_r$ is finite, there therefore exists a $\beta \in H$ such
    that both $u_{\alpha, \beta} \setminus u_\alpha$ and $u_{\alpha', \beta}
    \setminus u_{\alpha'}$ are disjoint from $u_r$.

    By Lemma
    \ref{compatibility_lemma}, the conditions $q_{\alpha, \beta}$ and $q_{\alpha', \beta}$ are compatible. Observe also that $q_{\alpha, \beta} = q_\alpha \cup (q_{\alpha,
    \beta} \restriction (u_{\alpha, \beta} \setminus u_\alpha) \times \omega)$.
    Since $r \leq q_\alpha$ and $\dom(r) \cap ((u_{\alpha, \beta} \setminus
    u_\alpha) \times \omega) = \emptyset$, the conditions $r$ and $q_{\alpha, \beta}$
    are compatible. Similarly, $r$ and $q_{\alpha', \beta}$ are compatible, and
    therefore $r^* = r \cup q_{\alpha, \beta} \cup q_{\alpha', \beta}$ is a condition
    in $\bb{P}$. Notice also that $\beta \notin u_r$. By the paragraph preceding
    Claim \ref{agreement_claim} and the fact that $j \geq \ell$, it follows that $(\beta, j) \notin
    \dom(r^*)$, so we may extend $r^*$ to a condition $r^{**}$ such that
    $(\beta, j) \in \dom(r^{**})$ and $r^{**}(\beta, j) = k$. In particular, $r^{**}$ will force that $(j,k)$ is in $I(\dot{f}_\beta)$.

    Recall that $j \geq \ell$ implies $(j,k) \notin \mathsf{e}$. Therefore, since it extends both $q_{\alpha, \beta}$ and $q_{\alpha', \beta}$ and
    forces $(j,k)$ to be in $I(\dot{f}_\alpha, \dot{f}_{\alpha'}, \dot{f}_\beta)$, the condition $r^{**}$ will force
    \[
      \text{``}\dot{\varphi}_{\dot{f}_\alpha}(j,k) = \dot{\varphi}_{\dot{f}_\beta}(j,k)
      = \dot{\varphi}_{\dot{f}_{\alpha'}}(j,k)\text{''},
    \]
    contradicting the fact that $r^{**} \leq r$ and $r$ forces
    $\text{``}\dot{\varphi}_{\dot{f}_\alpha}(j,k) \neq \dot{\varphi}_{\dot{f}_\alpha'}(j,k)\text{''}$.
  \end{proof}
  Now let $G$ be $\bb{P}$-generic over $V$ with $q_\emptyset \in G$. Let
  $\Phi = \langle \varphi_f \mid f \in X \rangle$ and $A$ denote the realizations in $V[G]$ of
  $\dot{\Phi}$ and $\dot{A}$, respectively. Define a function
  $\psi : \omega \times \omega \rightarrow \bb{Z}$ as follows. For any
  $(j,k) \in \omega \times \omega$ with $j \geq \ell$, if $(j,k) \in I(f_\alpha)$ for some $\alpha \in A$
then let $\psi(j,k) = \varphi_{f_\alpha}(j,k)$ (by Claim \ref{agreement_claim},
  this definition is independent of our choice of $\alpha$). In all other cases, let
  $\psi(j,k) = 0$.

  We claim that $\psi$ witnesses that $\Phi$ is trivial. Assume for contradiction that it does not, so that for some $f \in X$ the set $E_f := \{(j,k) \in I(f) \mid \varphi_f(j,k)
  \neq \psi(j,k)\}$ is infinite. Since $I(f) \cap (\ell \times \omega)$ is finite,
  the set $E_f^* = E_f \cap ([\ell, \omega) \times \omega)$ is then infinite and there are infinitely many $j < \omega$ for which
  $E_f^* \cap (j \times \omega) \neq \emptyset$. As $\bb{P}$ has the countable chain condition, $E_f \in
  V[G_W]$ for some countable set $W \subseteq \chi$ in $V$. Fix $\alpha \in A \setminus W$. By genericity, $I(f_\alpha) \cap E_f^*$ is infinite. It follows from our definition of $\psi$ that $\psi(j,k) = \varphi_{f_\alpha}(j,k)$ for all $(j,k) \in I(f_\alpha) \cap E_f^*$. Hence $\varphi_f \restriction (
  I(f_\alpha) \cap E_f^*) =^* \psi \restriction (I(f_\alpha) \cap E_f^*)$, by the coherence of $\Phi$,
  contradicting the definition of $E_f^*$ and the fact that $I_{f_\alpha} \cap
  E_f^*$ is infinite. This shows that $q_\emptyset$ forces $\dot{\Phi}$
  to be trivial, concluding the proof.
\end{proof}
Already in the above proof the broader contours of our more general argument are legible. In particular, observe that comparatively few of the Cohen reals in $X$, namely just those in the size-$\aleph_1$ set $\{f_\alpha\mid\alpha\in A\}$, played any essential role in the definition of the trivialization $\psi$. Similarly, in what follows we will construct trivializations of higher-dimensional coherent families $\Phi_n=\langle\varphi_{\vec{f}}\mid\vec{f}\in X^n\rangle$ by first trivializing those families over small indexing subsets $A$ of $X$. More precisely, as outlined in our introduction, the arguments for the higher-$n$ versions of Theorem \ref{1d_theorem} will split into two main phases: in the first, repeated application of principles like Fact \ref{delta_systems_fact} determine trivializations of small subfamilies $\Phi_n\restriction A$ of $\Phi_n$; in the second, inductive hypotheses propagate their triviality back out to the entirety of $\Phi_n$. We describe the mechanics of these two phases in reverse order in the next two sections; we then apply these descriptions in the proof of our main theorem, Theorem \ref{maintheorem}.

\section{Propagating trivializations}\label{propagatingsection}

In this section we show how the triviality of restrictions of $n$-coherent families
to domains constaining sufficiently many Cohen reals implies the triviality of the
entire families. We first introduce a slight technical variation of $n$-coherent families.

\begin{definition}
  Fix an $X \subseteq {^\omega}\omega$, a function $h \in {^\omega}\omega$, and
  a positive integer $n$. We say that a family of functions $\Phi = \langle \varphi_{\vec{f}}
  : I(\wedge \vec{f}) \cap I(g) \rightarrow \bb{Z} \mid \vec{f} \in X^n \rangle$
  is \emph{$n$-coherent below $g$} if it satisfies the first two bullet points of
  Definition \ref{NCOH}, the only difference being that in this case the domain
  of $\varphi_{\vec{f}}$ is $I(\wedge \vec{f}) \cap I(g)$ rather than
  $I(\wedge \vec{f})$. We say that such a family is \emph{$n$-trivial below $g$}
  if there is a $\psi$ or $\Psi$ as in the third bullet point of Definition
  \ref{NCOH}, again with the only difference being that, in case $n = 1$,
  we have $\psi:I(g) \rightarrow \bb{Z}$, and in case $n > 1$, we have
  $\psi_{\vec{f}}:I(\wedge \vec{f}) \cap I(g) \rightarrow \bb{Z}$ for all
  $\vec{f} \in X^{n-1}$.
\end{definition}

The following proposition is a simple observation but will be necessary in the
arguments of this section.

\begin{proposition} \label{below_prop}
  Suppose that $X \subseteq {^\omega}\omega$, $h \in {^\omega}\omega$, $n$ is a positive
  integer, and every $n$-coherent family of functions indexed by $X^n$ is trivial.
  Then every $n$-coherent family of functions below $g$ indexed by $X^n$ is trivial
  below $g$.
\end{proposition}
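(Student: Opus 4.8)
The plan is to deduce the proposition from its hypothesis by the elementary device of extending the functions of a family ``below $g$'' by zero, turning it into an honest family indexed by $X^n$.

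Concretely, given an $n$-coherent family below $g$, say $\Phi = \langle \varphi_{\vec{f}} : I(\wedge\vec{f}) \cap I(g) \to \mathbb{Z} \mid \vec{f} \in X^n \rangle$, I would first define, for each $\vec{f} \in X^n$, a function $\hat{\varphi}_{\vec{f}} : I(\wedge\vec{f}) \to \mathbb{Z}$ by setting $\hat{\varphi}_{\vec{f}}(j,k) = \varphi_{\vec{f}}(j,k)$ when $k \leq g(j)$ and $\hat{\varphi}_{\vec{f}}(j,k) = 0$ otherwise, and put $\hat{\Phi} = \langle \hat{\varphi}_{\vec{f}} \mid \vec{f} \in X^n \rangle$. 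The next step is to check that $\hat{\Phi}$ is $n$-coherent in the sense of Definition \ref{NCOH}: it is alternating because $\Phi$ is and extension-by-zero commutes with permuting $\vec{f}$; and for the coherence relation, given $\vec{f} \in X^{n+1}$, on any $(j,k) \in I(\wedge\vec{f})$ with $k > g(j)$ every summand $\hat{\varphi}_{\vec{f}^i}(j,k)$ is $0$, while on $(j,k) \in I(\wedge\vec{f}) \cap I(g)$ one has $\sum_{i=0}^n (-1)^i \hat{\varphi}_{\vec{f}^i}(j,k) = \sum_{i=0}^n (-1)^i \varphi_{\vec{f}^i}(j,k)$, which is $0$ off a finite set by the coherence of $\Phi$ below $g$. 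Hence $\sum_{i=0}^n (-1)^i \hat{\varphi}_{\vec{f}^i} =^* 0$.

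I would then invoke the hypothesis: $\hat{\Phi}$ is trivial, so fix a (type I) trivialization as in Definition \ref{NCOH} and restrict it back to $I(g)$. If $n = 1$, this is a $\psi : \omega^2 \to \mathbb{Z}$ with $\psi =^* \hat{\varphi}_f$ for all $f \in X$; since $\hat{\varphi}_f$ coincides with $\varphi_f$ on $I(f) \cap I(g)$, the function $\psi \restriction I(g)$ satisfies $\psi \restriction I(g) =^* \varphi_f$ for all $f \in X$, hence is a $1$-trivialization of $\Phi$ below $g$. If $n > 1$, this is an alternating $\Psi = \langle \psi_{\vec{f}} : I(\wedge\vec{f}) \to \mathbb{Z} \mid \vec{f} \in X^{n-1} \rangle$ with $\sum_{i=0}^{n-1} (-1)^i \psi_{\vec{f}^i} =^* \hat{\varphi}_{\vec{f}}$ for all $\vec{f} \in X^n$; restricting each $\psi_{\vec{f}}$ to $I(\wedge\vec{f}) \cap I(g)$ yields an alternating family $\Psi'$ with $\sum_{i=0}^{n-1} (-1)^i \psi'_{\vec{f}^i} =^* \varphi_{\vec{f}}$ for all $\vec{f} \in X^n$, i.e., an $n$-trivialization of $\Phi$ below $g$.

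I do not expect a real obstacle here: the whole content is the observation that extension-by-zero sends $n$-coherent families below $g$ to $n$-coherent families indexed by $X^n$, together with the complementary triviality that restricting a trivialization back to $I(g)$ loses nothing. The only point needing any care is bookkeeping with the $=^*$ relation — keeping in mind that it denotes equality off a finite set on the intersection of the relevant domains — but since passage to $I(g)$ only shrinks domains, finiteness of the exceptional sets is automatically preserved, so this presents no difficulty.
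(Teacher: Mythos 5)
Your proof is correct and follows essentially the same route as the paper's: extend by zero to get an honest $n$-coherent family indexed by $X^n$, invoke the hypothesis, and restrict the resulting trivialization back to $I(g)$. The only difference is that you spell out the coherence verification that the paper leaves as "easily verified."
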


\begin{proof}
  Let $\Phi = \langle \varphi_{\vec{f}} : I(\wedge \vec{f}) \cap I(g) \rightarrow
  \bb{Z} \mid \vec{f} \in X^n \rangle$ be $n$-coherent below $g$. Define an
  $n$-coherent family $\Phi^* = \langle \varphi_{\vec{f}}^* : I(\wedge \vec{f})
  \rightarrow \bb{Z} \mid \vec{f} \in X^n \rangle$ by letting
  $\varphi_{\vec{f}}^*(j,k) = \varphi_{\vec{f}}(j,k)$ for all $(j,k) \in
  I(\wedge \vec{f}) \cap I(g)$ and $\varphi_{\vec{f}}^*(j,k) = 0$ for all
  $(j,k) \in I(\wedge \vec{f}) \setminus I(g)$. It is easily verified that
  $\Phi^*$ is $n$-coherent. By assumption, $\Phi^*$ is trivial, as witnessed by
  a single function $\psi^*$ if $n = 1$ or a family $\Psi^* = \langle \psi^*_{\vec{f}}
  \mid \vec{f} \in X^{n-1} \rangle$ if $n > 1$. If $n = 1$, then the function
  $\psi := \psi^* \restriction I(g)$ witnesses that $\Phi$ is trivial below $g$,
  and if $n > 1$, then the family $\Psi := \langle \psi_{\vec{f}}^* \restriction
  I(\wedge \vec{f}) \cap I(g) \mid \vec{f} \in X^{n-1} \rangle$ witnesses that
  $\Phi$ is trivial below $g$, as desired.
\end{proof}

For motivation, we now begin with the $n=2$ case of Theorem \ref{maintheorem}; assume the aforementioned ``first phase'' of its proof completed. More precisely, this assumption takes the following form: let $\lambda_1=\beth_1^{+}$ and let $\lambda_2=\sigma(\lambda_1^{+},5)$ (see again Definition \ref{sigmadefinition} for the expression $\sigma(\,\cdot\,,\,\cdot\,)$). By the arguments of Section \ref{highernsection}, for any $\chi\geq\lambda_2$, the following will hold in $V^{\mathrm{Add}(\omega,\chi)}$: \emph{if
\begin{itemize}
\item $X\subseteq\,^\omega\omega$ contains at least $\lambda_2$-many of the Cohen reals added by $\mathrm{Add}(\omega,\chi)$, and
\item $\Phi=\langle\varphi_{f,g}\mid (f,g)\in X^2\rangle$ is $2$-coherent,
\end{itemize}
then there exists an $A\subseteq X$ such that
\begin{itemize}
\item $A$ contains $\lambda_1$-many of the Cohen reals added by $\mathrm{Add}(\omega,\chi)$, and
\item $\Phi\restriction A$ is trivial.
\end{itemize}}
\noindent Now let $G$ be $\mathrm{Add}(\omega,\chi)$-generic over $V$ and work in $V[G]$.
By assumption, there exists an $A\subseteq\chi$ indexing $\lambda_1$-many Cohen
reals $f_\alpha$ $(\alpha\in A)$ and a family $\langle\tau_\alpha: I(f_\alpha)\to\mathbb{Z}\mid
\alpha\in A\rangle$ trivializing $\Phi\restriction A$. We propagate this trivialization
to all of $\Phi$ as follows: for all $f\in X$ and $\alpha\in A$, let
\[
  \varsigma_\alpha^f=\varphi_{\alpha, f} +\tau_\alpha\,.
\]
(Here and below, in subscripts we will tend to abbreviate Cohen reals by their indices;
the second $\Phi\restriction A$ just above, which denotes $\langle\varphi_{\alpha,\beta}
\mid(\alpha,\beta)\in A^2\rangle$, is a related minor abuse. We note lastly that,
in keeping with our restriction-conventions, the domain of $\varsigma_\alpha^f$
should be understood to be $\mathrm{dom}(\varphi_{\alpha,f})\cap\mathrm{dom}(\tau_\alpha)$.)

\begin{claim}
  For each $f\in\,X$ the family $C_1^f:=\langle\varsigma^f_\alpha:I(f\wedge f_\alpha)
  \to\mathbb{Z}\mid \alpha\in A\rangle$ is coherent below $f$.
\end{claim}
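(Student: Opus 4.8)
The goal is to verify that the family $C_1^f = \langle \varsigma^f_\alpha : I(f \wedge f_\alpha) \to \mathbb{Z} \mid \alpha \in A \rangle$ is $1$-coherent below $f$, which by definition means checking two things: that $C_1^f$ is alternating (vacuous in the $1$-dimensional case, since there are no nontrivial permutations of a single index), and that for every pair $\alpha, \beta \in A$ one has $\varsigma^f_\alpha =^* \varsigma^f_\beta$ on the common domain $I(f \wedge f_\alpha \wedge f_\beta)$. So the content is entirely in the mod-finite agreement of the $\varsigma^f_\alpha$'s.

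\emph{First} I would unwind the definitions: on $I(f \wedge f_\alpha \wedge f_\beta)$ we have
\[
  \varsigma^f_\alpha - \varsigma^f_\beta = (\varphi_{\alpha,f} + \tau_\alpha) - (\varphi_{\beta,f} + \tau_\beta) = (\varphi_{\alpha,f} - \varphi_{\beta,f}) + (\tau_\alpha - \tau_\beta).
\]
\emph{Then} I would invoke the two coherence facts in play. On the one hand, $\Phi = \langle \varphi_{f,g} \mid (f,g) \in X^2 \rangle$ is $2$-coherent, so for the triple $(f_\alpha, f_\beta, f) \in X^3$ the cocycle condition gives
\[
  \varphi_{\beta,f} - \varphi_{\alpha,f} + \varphi_{\alpha,\beta} =^* 0
\]
on $I(f_\alpha \wedge f_\beta \wedge f)$ (using the alternating property to rewrite $\varphi_{f_\beta, f} - \varphi_{f_\alpha, f} + \varphi_{f_\alpha, f_\beta}$ in the sign pattern of Definition \ref{NCOH}); hence $\varphi_{\alpha,f} - \varphi_{\beta,f} =^* \varphi_{\alpha,\beta}$. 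On the other hand, $\langle \tau_\alpha \mid \alpha \in A \rangle$ trivializes $\Phi \restriction A$, which by the $n=2$ clause of Definition \ref{NCOH} means precisely that $\tau_\alpha - \tau_\beta =^* \varphi_{\alpha,\beta}$ on $I(f_\alpha \wedge f_\beta)$ (again after the alternating sign bookkeeping: $\psi_{\vec f^0} - \psi_{\vec f^1} = \tau_\beta - \tau_\alpha =^* \varphi_{\alpha,\beta}$, so $\tau_\alpha - \tau_\beta =^* -\varphi_{\alpha,\beta}$ — I would fix signs carefully here, adjusting the definition of $\varsigma$ or the orientation as needed so the two error terms cancel rather than add).

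\emph{Combining} these two mod-finite identities on the common domain $I(f \wedge f_\alpha \wedge f_\beta) \subseteq I(f_\alpha \wedge f_\beta)$, the $\varphi_{\alpha,\beta}$ terms cancel and we get $\varsigma^f_\alpha - \varsigma^f_\beta =^* 0$, which is exactly what coherence below $f$ demands. The main thing to get right — really the only obstacle, since everything else is a one-line bookkeeping exercise — is the \emph{sign and orientation bookkeeping}: Definition \ref{NCOH} packages $1$-coherence of $\Psi$ via the alternating sum $\sum_{i=0}^{n-1}(-1)^i \psi_{\vec f^i}$, so extracting the clean statement ``$\tau_\alpha - \tau_\beta =^* \pm\varphi_{\alpha,\beta}$'' and ``$\varphi_{\alpha,f} - \varphi_{\beta,f} =^* \pm\varphi_{\alpha,\beta}$'' with matching signs requires being deliberate about the order of arguments and the use of the alternating property. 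Once the signs are aligned (which is why the definition of $\varsigma^f_\alpha$ is $\varphi_{\alpha,f} + \tau_\alpha$ and not a difference), the cancellation is immediate and the claim follows. I would also note in passing that the domains match up correctly: each $\varsigma^f_\alpha$ is defined on $\mathrm{dom}(\varphi_{\alpha,f}) \cap \mathrm{dom}(\tau_\alpha) = I(f \wedge f_\alpha) \cap I(f_\alpha) = I(f \wedge f_\alpha)$, consistent with the statement.
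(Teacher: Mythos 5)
Your proposal is correct and takes essentially the same approach as the paper: expand $\varsigma^f_\alpha - \varsigma^f_\beta$, substitute $\tau_\alpha - \tau_\beta =^* -\varphi_{\alpha,\beta}$ (from the trivialization of $\Phi\restriction A$) and use the $2$-coherence cocycle identity for the triple $(f_\alpha, f_\beta, f)$ to cancel. Your worry about sign alignment is unfounded — the computation you wrote already shows the two $\varphi_{\alpha,\beta}$ terms cancel with the definition of $\varsigma$ as given, matching the paper's one-line verification.
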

\begin{proof}
  This follows from the fact that, for all $\alpha$ and $\beta$ in $A$, we have
  \[
    \varsigma^f_\beta-\varsigma^f_\alpha=\varphi_{\beta, f} +
    \tau_\beta-\varphi_{\alpha, f} -\tau_\alpha=^{*}\varphi_{\beta, f}-
    \varphi_{\alpha, f}+\varphi_{\alpha, \beta}=^{*} 0\,,
  \]
  where the first $=^*$ follows from the fact that $\langle \tau_\alpha \mid \alpha \in A
  \rangle$ trivializes $\Phi \restriction A$ and hence $\tau_\beta - \tau_\alpha
  =^* \varphi_{\alpha, \beta}$, the second $=^*$ follows from the 2-coherence of
  $\Phi$.
\end{proof}
As $A$ contains more than $\beth_1$-many Cohen reals,  Theorem \ref{1d_theorem}
and Proposition \ref{below_prop} imply that each such family $C_1^f$ admits a trivialization
 $\tau^f : I(f) \rightarrow \bb{Z}$.
\begin{claim} The family $T_1:=\langle\tau^f\mid f\in X\rangle$ trivializes $\Phi$.
\end{claim}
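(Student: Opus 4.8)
The plan is to verify the defining equation of a type I trivialization of the $2$-coherent family $\Phi$, namely that for all $(f,g)\in X^2$ we have $\tau^g-\tau^f=^*\varphi_{f,g}$. First I would fix $f,g\in X$ and an arbitrary $\alpha\in A$ large enough (in the $\leq^*$-sense) below $f\wedge g\wedge f_\alpha$; the point of carrying $\alpha$ along is that $\tau^f$ and $\tau^g$ were produced as $1$-trivializations of the families $C_1^f$ and $C_1^g$, so their behavior is only pinned down \emph{through} the functions $\varsigma^f_\alpha,\varsigma^g_\alpha$. Concretely, since $\tau^f$ trivializes $C_1^f=\langle\varsigma^f_\alpha\mid\alpha\in A\rangle$ below $f$, we have $\tau^f=^*\varsigma^f_\alpha$ on $I(f\wedge f_\alpha)$, and likewise $\tau^g=^*\varsigma^g_\alpha$ on $I(g\wedge f_\alpha)$.

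Next I would compute, on the common domain $I(f\wedge g\wedge f_\alpha)$,
\[
  \tau^g-\tau^f =^* \varsigma^g_\alpha-\varsigma^f_\alpha
  = (\varphi_{\alpha,g}+\tau_\alpha)-(\varphi_{\alpha,f}+\tau_\alpha)
  = \varphi_{\alpha,g}-\varphi_{\alpha,f} =^* \varphi_{f,g},
\]
where the final $=^*$ is an instance of the $2$-coherence of $\Phi$ applied to the triple $(\alpha,f,g)$ (together with the alternating property, which lets us rewrite $\varphi_{\alpha,g}-\varphi_{\alpha,f}-\varphi_{f,g}$ as the alternating sum $\sum_{i}(-1)^i\varphi_{\vec h^i}$ for $\vec h=(f_\alpha,f,g)$, which is $=^*0$). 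This shows $\tau^g-\tau^f=^*\varphi_{f,g}$ on a \emph{cofinite subset} of $I(f\wedge g\wedge f_\alpha)$.

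The one genuine subtlety — and the step I expect to be the main obstacle — is that the above only establishes agreement on $I(f\wedge g\wedge f_\alpha)$, whereas triviality of $\Phi$ requires agreement on all of $I(f\wedge g)$. To bridge this gap I would invoke Observation \ref{eastofell}: it suffices to find, for the given pair $(f,g)$, a single $\ell<\omega$ beyond which $\tau^g-\tau^f$ and $\varphi_{f,g}$ agree on \emph{all} of $I(f\wedge g)$, and this follows from the fact that $A$ contains $\lambda_1>\beth_1$-many Cohen reals: arguing exactly as in the final paragraph of the proof of Theorem \ref{1d_theorem}, if the symmetric difference were infinite it would lie in some $V[G_W]$ for a countable $W$, and then picking $f_\alpha$ with $\alpha\in A\setminus W$ one gets by genericity that $I(f_\alpha)$ meets that difference infinitely often, contradicting the displayed computation (which shows $\tau^g-\tau^f=^*\varphi_{f,g}$ throughout $I(f\wedge g\wedge f_\alpha)$). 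Hence the symmetric difference is finite, so $\tau^g-\tau^f=^*\varphi_{f,g}$ on $I(f\wedge g)$, and since $(f,g)\in X^2$ was arbitrary, $T_1$ trivializes $\Phi$.
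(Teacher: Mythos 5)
Your proof is correct and follows essentially the same route as the paper: the algebraic identity $\tau^g-\tau^f=^*\varsigma^g_\alpha-\varsigma^f_\alpha=\varphi_{\alpha,g}-\varphi_{\alpha,f}=^*\varphi_{f,g}$ on $I(f\wedge g\wedge f_\alpha)$, together with the ccc-plus-genericity argument picking $\alpha\in A\setminus W$, is exactly the paper's argument. The only cosmetic difference is that the paper frames it directly as a proof by contradiction rather than a two-stage ``establish local agreement, then bridge the gap'' structure; also the phrase ``$\alpha$ large enough in the $\leq^*$-sense'' is an unneeded (and slightly misleading) hedge, since only $\alpha\in A\setminus W$ is needed, as you in fact use.
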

\begin{proof} Suppose for contradiction that it did not. Then for some $f,g\in X$ and infinite $E \subseteq I(f\wedge g)$, \begin{align*}\tau^g(j,k)-\tau^f(j,k)\neq \varphi_{f,g}(j,k)\end{align*} for all $(j,k)\in E$. As $\mathrm{Add}(\omega,\chi)$ has the countable chain condition, there exists a $W\in [\chi]^{\aleph_0}$ such that $E\in V[G_W]$. By genericity, for any $\beta\in A\backslash W$ the domain $I(f_\beta)$ then has infinite intersection with $E$. However,
\[
  \tau^g-\tau^f=^{*} \varsigma^g_\beta-\varsigma^f_\beta = \varphi_{\beta, g}+\tau_\beta-\varphi_{\beta, f}-\tau_\beta =^{*}\varphi_{f,g}\,,
\]
where, as indicated, the equalities should each be read as applying over the restricted domain $I(f\wedge g\wedge f_\beta)$. It follows that $\tau^g(j,k)-\tau^f(j,k)\neq \varphi_{f,g}(j,k)$ for only finitely many $(j,k)\in E\cap I(f_\beta)$, contradicting our assumption.
\end{proof}
This family $T_1$ is the propagation of the trivialization $\langle\tau_\alpha\mid\alpha\in A\rangle$ to all of $X$ which we had desired. The above technique generalizes, but entails, unsurprisingly, more steps in the cases of higher $n$. We precede its generalized description with a proof-free sketch of the $n=4$ case, simply to better indicate these steps' shape. All coherent and trivializing families below should be understood to be alternating.

\begin{example}\label{ex3} Structuring our argument is an increasing sequence of cardinals $\lambda_n$; see the paragraph preceding Theorem \ref{maintheorem} for their precise definition. The case of $n=4$ begins with a forcing $\mathbb{P}=\mathrm{Add}(\omega,\chi)$ for some $\chi\geq\lambda_4$; as before, we will work in $V^{\mathbb{P}}$.
Let $X\subseteq\,^\omega\omega$ contain at least $\lambda_4$-many of the Cohen reals added by $\mathbb{P}$ and let $\Phi=\langle\varphi_{\vec{f}}\mid \vec{f}\in X^4\rangle$ be $4$-coherent. The arguments of Section \ref{highernsection} will furnish us with an $A\in [\chi]^{\lambda_3}$ and a $$T^A_3=\langle\tau_{\alpha\beta\gamma}\mid (\alpha,\beta,\gamma)\in A^3\rangle$$ trivializing $\Phi\restriction A$.
For each $f\in X$ and $(\alpha,\beta,\gamma)\in A^3$ let $$\varsigma^f_{\alpha\beta\gamma}=\varphi_{\alpha\beta\gamma f}+\tau_{\alpha\beta\gamma}\,.$$
This defines a 3-coherent family $C^f_3=\langle\varsigma^f_{\alpha\beta\gamma}\mid (\alpha,\beta,\gamma)\in A^3\rangle$ below $f$. By the $n=3$ case of Theorem \ref{maintheorem}
and Proposition \ref{below_prop}, for each $f\in X$ there exist trivializations $T^f_2=\langle\tau^f_{\alpha\beta}\mid (\alpha,\beta)\in A^2\rangle$ of $C^f_3$. Using these functions, define for each $(f,g)\in X^2$ the 2-coherent family $C^{fg}_2=\langle\varsigma^{fg}_{\alpha\beta}\mid (\alpha,\beta)\in A^2\rangle$ below
$f \wedge g$ via the assignments
$$\varsigma^{fg}_{\alpha\beta}=\varphi_{\alpha\beta fg}-\tau^g_{\alpha\beta}+\tau^f_{\alpha\beta}\,.$$
By the $n=2$ case of Theorem \ref{maintheorem} and Proposition \ref{below_prop}, for each $(f,g)\in X^2$ there then exist trivializations $T^{fg}_1=\langle\tau^{fg}_{\alpha}\mid \alpha\in A\rangle$ of $C^{fg}_2$. Using these functions, define for each $(f,g,h)\in X^3$ the 1-coherent family $C^{fgh}_1=\langle\varsigma^{fgh}_{\alpha}\mid (\alpha,\beta)\in A^2\rangle$ below
$f \wedge g \wedge h$ via the assignments
$$\varsigma^{fgh}_{\alpha}=\varphi_{\alpha fgh}+\tau^{gh}_{\alpha}-\tau^{fh}_{\alpha}+\tau_{\alpha}^{fg}\,.$$
By Theorem \ref{1d_theorem} and Proposition \ref{below_prop}, for each $(f,g,h)\in X^3$ there exists a trivialization $\tau^{fgh}$ of $C_1^{fgh}$. As above, we then conclude by observing that the collection $\langle\tau^{fgh}\mid (f,g,h)\in X^3\rangle$ trivializes $\Phi$, as desired.
\end{example}

Now, within the larger context of our inductive argument, we summarize the general case. Fix an integer $n>1$ and assume the $n^\mathrm{th}$ instance of our inductive hypothesis, namely, that \emph{for any $j<n$ and cardinal $\chi$, any $j$-coherent family of functions $\Phi=\langle\varphi_{\vec{f}}\mid\vec{f}\in X^j\rangle$ in $V^{\mathrm{Add}(\omega,\chi)}$ whose index-set $X$ contains at least $\lambda_j$-many of the Cohen reals added by $\mathbb{P}=\mathrm{Add}(\omega,\chi)$ is trivial}. We show how, combined with the arguments of Section \ref{highernsection}, the $n^{\mathrm{th}}$ instance of our inductive hypothesis implies the $(n+1)^{\mathrm{st}}$ instance. For $\chi<\lambda_n$ this implication is trivial. Therefore fix $\chi\geq\lambda_n$ and, working in $V^{\mathrm{Add}(\omega,\chi)}$, fix an $n$-coherent family $\Phi=\langle\varphi_{\vec{f}}\mid\vec{f}\in X^n\rangle$ whose index-set $X$ contains at least $\lambda_n$-many of the Cohen reals added by $\mathbb{P}=\mathrm{Add}(\omega,\chi)$. We introduce the organizing notations $\mathscr{T}_k^n$ and $\mathscr{C}_k^n$ and show by the following sequence of steps that $\Phi$ is trivial:
\begin{enumerate}
\item Fix an $n$-coherent $\Phi$ as above. The arguments of Section \ref{highernsection} secure for us a set $A\in [\chi]^{\lambda_{n-1}}$ and a $\mathscr{T}_1^{n}$ such that $\mathscr{T}_1^{n}$ trivializes $\Phi\restriction A$.
\item $\mathscr{T}_1^{n}$ is the first in a sequence of families of functions $$\mathscr{T}_k^{n}=\langle\tau_{\vec{\alpha}}^{\vec{f}}\mid \vec{f}\in X^{k-1}\text{ and }\vec{\alpha}\in A^{n-k}\rangle$$
in which $k$ ranges from $1$ to $n$ and $\mathscr{T}_n^{n}$ trivializes $\Phi$. These families are inductively defined alongside a series of related families $\mathscr{C}_k^n$, as described in items (3) and (4) below.
\item If $k$ is less than $n$ then $\mathscr{T}_k^{n}$ induces a family of $(n-k)$-coherent families of functions
$$\mathscr{C}_k^{n}=\langle\varsigma_{\vec{\alpha}}^{\vec{f}}\mid \vec{f}\in X^k\text{ and }\vec{\alpha}\in A^{n-k}\rangle .$$
To be precise, $\mathscr{C}_k^{n}$ is the union of the $(n-k)$-coherent families of functions
$$C_{n-k}^{\vec{f}}=\langle\varsigma_{\vec{\alpha}}^{\vec{f}}\mid \vec{\alpha}\in A^{n-k}\rangle$$
as $\vec{f}$ ranges through $X^k$.
\item Our inductive hypothesis ensures us trivializations $T_{n-k-1}^{\vec{f}}$ of each $C_{n-k}^{\vec{f}}$. These serve then to define
$$\mathscr{T}_{k+1}^n:=\bigcup_{\vec{f}\in X^k} T_{n-k-1}^{\vec{f}},$$and repeated, alternating applications of this and the previous step cumulatively yield the sequence $$\langle \mathscr{T}_k^n\mid 1\leq k\leq n\rangle$$ of item (2), as desired.
\end{enumerate}
Two points in the above scheme merit further discussion:
\begin{enumerate}
\item[(i)] We must specify precisely how the $(n-k)$-coherent families of functions $C_{n-k}^{\vec{f}}$ derive from the families $\mathscr{T}_k^{n}$, and verify that they are in fact $(n-k)$-coherent.
\item[(ii)] We must verify that $\mathscr{T}_n^{n}$ does indeed trivialize $\Phi$.
\end{enumerate}
We begin with item (i). The families $\mathscr{C}^n_k$ are inductively defined on positive integers $k\leq n$. The pattern when $k=1$ is plain enough from the examples above: for each $f\in X$ the subclass $C^f_{n-1}=\langle\varsigma^f_{\vec{\alpha}}\mid \vec{\alpha}\in A^{n-1}\rangle$ of $\mathscr{C}^n_1$ is defined from $\mathscr{T}_1^n=\langle\tau_{\vec{\alpha}}\mid \vec{\alpha}\in A^{n-1}\rangle$ by \begin{align}\label{defCn1} \varsigma_{\vec{\alpha}}^f=\varphi_{\vec{\alpha}f}+(-1)^n\tau_{\vec{\alpha}}\end{align}
for each $\vec{\alpha}\in A^{n-1}$. Observe then that for all $\vec{\alpha}\in A^n$,
\begin{align*}
\sum_{i=0}^{n-1}(-1)^i\varsigma_{\vec{\alpha}^i}^f & = \;\,\sum_{i=0}^{n-1}(-1)^i\varphi_{\vec{\alpha}^{i}\!f}+ (-1)^n\sum_{i=0}^{n-1}(-1)^i\tau_{\vec{\alpha}^i} \\ & =^{*} \sum_{i=0}^{n-1}(-1)^i\varphi_{\vec{\alpha}^{i}\!f} + (-1)^n\varphi_{\vec{\alpha}} \\ & =^{*} 0 ,
\end{align*}
by the coherence of $\Phi$. This shows that $C^f_{n-1}$ is $(n-1)$-coherent below $f$ and
hence, by the inductive hypothesis, $(n-1)$-trivial.

For the more general inductive definition of $\mathscr{C}_k^n$, suppose that the family $\mathscr{T}_k^n$ is defined; suppose also that the families $\mathscr{C}_j^n$ and $\mathscr{T}_j^n$ are defined for all $j<k$ and that each exhibits the coherence and trivialization features, respectively, described above. We then define $\mathscr{C}_k^n$ by letting \begin{align}\label{inductivedefinitionofCkn}\varsigma_{\vec{\alpha}}^{\vec{f}}=\varphi_{\vec{\alpha}\vec{f}}+(-1)^{n-k+1}\sum_{i=0}^{k-1}(-1)^i\tau_{\vec{\alpha}}^{\vec{f}^i}\end{align} for each $\vec{f}\in X^k$ and $\vec{\alpha}\in A^{n-k}$. Observe that equation (\ref{defCn1}) identifies naturally with the case of $k=1$.
\begin{claim} For each $\vec{f}\in X^k$ the family $C_{n-k}^{\vec{f}}=\langle\varsigma_{\vec{\alpha}}^{\vec{f}}\mid \vec{\alpha}\in A^{n-k}\rangle$ is $(n-k)$-coherent.
\end{claim}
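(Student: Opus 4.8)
The plan is to verify $(n-k)$-coherence of $C_{n-k}^{\vec f}$ directly from the defining equation (\ref{inductivedefinitionofCkn}), by computing the alternating sum $\sum_{j=0}^{n-k}(-1)^j\varsigma^{\vec f}_{\vec\alpha^{\,j}}$ for an arbitrary $\vec\alpha\in A^{n-k+1}$ and showing it is $=^*0$. Expanding using (\ref{inductivedefinitionofCkn}), this sum splits into two pieces: a $\varphi$-piece $\sum_{j=0}^{n-k}(-1)^j\varphi_{\vec\alpha^{\,j}\vec f}$, and a $\tau$-piece $(-1)^{n-k+1}\sum_{j=0}^{n-k}(-1)^j\sum_{i=0}^{k-1}(-1)^i\tau^{\vec f^{\,i}}_{\vec\alpha^{\,j}}$. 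The $\varphi$-piece is $=^*(-1)^{n-k+1}$ times a further alternating sum in the $\vec f$-coordinates, by the $n$-coherence of $\Phi$ applied to the $(n+1)$-tuple $\vec\alpha\vec f$ (after removing the entries that lie in $\vec\alpha$ versus in $\vec f$; the sign bookkeeping is where one must be careful, since deleting an entry of $\vec f$ from the concatenation $\vec\alpha\vec f$ incurs a shift by $|\vec\alpha|=n-k+1$, explaining the prefactor $(-1)^{n-k+1}$). The $\tau$-piece should, after swapping the order of summation, be rewritten so that for each fixed $i<k$ the inner sum $\sum_{j=0}^{n-k}(-1)^j\tau^{\vec f^{\,i}}_{\vec\alpha^{\,j}}$ is recognized: since $\mathscr T^n_k$ trivializes $\mathscr C^n_{k-1}$ at level $k-1$, this inner sum is $=^*\varsigma^{\vec f^{\,i}}_{\vec\alpha}$ (the member of $C^{\vec f^{\,i}}_{n-k+1}\subseteq\mathscr C^n_{k-1}$), up to the appropriate sign coming from the trivialization relation in Definition \ref{NCOH}.

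Concretely, I would first fix notation: $\vec\alpha\in A^{n-k+1}$, $\vec f\in X^k$, and write $\vec\alpha^{\,j}$, $\vec f^{\,i}$ for the entry-deletions as in Section \ref{highercoherencesection}. Then I would carry out the double expansion, interchange the two finite sums in the $\tau$-piece, and in the inner sum apply the inductive trivialization property of $\mathscr T^n_k$ over $\mathscr C^n_{k-1}$: namely that $\sum_{j=0}^{(n-k+1)-1}(-1)^j\tau^{\vec g}_{\vec\alpha^{\,j}}=^*\varsigma^{\vec g}_{\vec\alpha}$ for every $\vec g\in X^{k-1}$ and $\vec\alpha\in A^{n-k+1}$, which is exactly what it means for $\mathscr T^n_k=T^n_{(n-k+1)-1}$ to trivialize the $(n-k+1)$-coherent families $C^{\vec g}_{n-k+1}$ comprising $\mathscr C^n_{k-1}$ (here $\vec g=\vec f^{\,i}$). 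This turns the $\tau$-piece into $(-1)^{n-k+1}\sum_{i=0}^{k-1}(-1)^i\varsigma^{\vec f^{\,i}}_{\vec\alpha}$, and then one uses the inductive definition (\ref{inductivedefinitionofCkn}) \emph{at level $k-1$} to replace each $\varsigma^{\vec f^{\,i}}_{\vec\alpha}$ by $\varphi_{\vec\alpha\vec f^{\,i}}+(-1)^{n-k+2}\sum_{i'=0}^{k-2}(-1)^{i'}\tau^{(\vec f^{\,i})^{i'}}_{\vec\alpha}$. At this point the remaining $\tau$-terms should telescope/cancel in pairs under the standard simplicial identity $(\vec f^{\,i})^{i'}=(\vec f^{\,i'})^{i-1}$ for $i'<i$, leaving only $\varphi$-terms, and the resulting $\varphi$-expression should be, up to sign, the $n$-coherence relation for $\Phi$ evaluated on $\vec\alpha\vec f$, hence $=^*0$.

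As an alternative, more streamlined route, I would consider proving the claim without unwinding all the way down: one can instead argue by a single induction on $k$ where the inductive hypothesis packages both the coherence of $C^{\vec f}_{n-k}$ \emph{and} the exact value of its alternating sum $\sum_j(-1)^j\varsigma^{\vec f}_{\vec\alpha^{\,j}}$ as a specific signed combination of $\varphi_{\vec\alpha^{\,j}\vec f}$'s; then the $k\to k+1$ step is just substituting (\ref{inductivedefinitionofCkn}) and invoking the trivialization identity once, with the cancellation of the double $\tau$-sum handled abstractly via ``$\mathsf d\circ\mathsf d=0$''-type reasoning rather than term-by-term. The base case $k=1$ is already done in the text preceding the claim.

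The main obstacle I anticipate is purely the sign and index bookkeeping: keeping straight (a) the sign $(-1)^{n-k+1}$ that arises because $\vec\alpha$ precedes $\vec f$ in the concatenation $\vec\alpha\vec f$, so that deleting the $i$-th entry of $\vec f$ corresponds to deleting position $(n-k+1)+i$ of $\vec\alpha\vec f$; (b) the interaction of this with the interchange of the two summations in the $\tau$-piece; and (c) verifying that the leftover double-$\tau$ sum genuinely cancels, which requires the simplicial identity for iterated deletions together with the alternating (antisymmetry) property of $\mathscr T^n_k$ — this is where an off-by-one or a sign slip is most likely to hide. Everything else is a routine, if somewhat lengthy, manipulation of finite alternating sums modulo finite sets, exactly parallel to the $n=2$ and $n=4$ computations displayed above and to the base case (\ref{defCn1}).
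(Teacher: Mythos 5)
Your proposal follows essentially the same route as the paper's computational verification: expand via (\ref{inductivedefinitionofCkn}), interchange sums in the $\tau$-piece and apply the trivialization relation to recognize $\varsigma^{\vec f^{\,i}}_{\vec\alpha}$, expand those $\varsigma$'s at level $k-1$, combine all the resulting $\varphi$-terms into the $n$-coherence boundary for $\vec\alpha\vec f$, and cancel the leftover double $\tau$-sum via the simplicial identity (your $(\vec f^{\,i})^{i'} = (\vec f^{\,i'})^{i-1}$ for $i'<i$ is the paper's $(\vec f^{\,j})^\ell = (\vec f^{\,\ell+1})^j$ for $j\leq\ell$ after relabeling). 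The one sign you flag does need adjustment --- the prefactor obtained by converting the original $\varphi$-piece via $n$-coherence is $(-1)^{n-k}$ rather than $(-1)^{n-k+1}$, the extra $-1$ coming from moving the $\vec f$-deletion terms to the other side of the coherence relation, and this is exactly what makes those $\varphi$-terms \emph{cancel} against the ones produced by the level-$(k-1)$ expansion --- but since you explicitly anticipate this as the likely failure point, the plan as a whole is sound.
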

\begin{proof} The more formal statement of the claim is that for each $\vec{f}\in X^k$ and $\vec{\alpha}\in A^{n-k+1}$,
\begin{align*}
\sum_{i=0}^{n-k}(-1)^i\varsigma_{\vec{\alpha}^i}^{\vec{f}} =^{*} 0\, .
\end{align*}
This is computationally verified as follows:
\begin{align*}
\sum_{i=0}^{n-k}(-1)^i\varsigma_{\vec{\alpha}^i}^{\vec{f}} & = \;\,\sum_{i=0}^{n-k}(-1)^i\varphi_{\vec{\alpha}^i\vec{f}}+(-1)^{n-k+1}\sum_{i=0}^{n-k}(-1)^i\sum_{j=0}^{k-1}(-1)^j\tau_{\vec{\alpha}^i}^{\vec{f}^j} \\ & =^{*}\sum_{i=0}^{n-k}(-1)^i\varphi_{\vec{\alpha}^i\vec{f}}+(-1)^{n-k+1}\sum_{j=0}^{k-1}(-1)^j\varsigma_{\vec{\alpha}}^{\vec{f}^j} \\ & =^{*}\sum_{i=0}^{n-k}(-1)^i\varphi_{\vec{\alpha}^i\vec{f}}+(-1)^{n-k+1}\bigg(\sum_{j=0}^{k-1}(-1)^j\Big(\varphi_{\vec{\alpha}\vec{f}^j}+(-1)^{n-k}\sum_{\ell=0}^{k-2}(-1)^{\ell}\tau_{\vec{\alpha}}^{(\vec{f}^j)^\ell}\Big)\bigg) \\ & =^{*} \sum_{i=0}^{n}(-1)^i\varphi_{(\vec{\alpha}\vec{f})^i}+(-1)^{n-k+1}\bigg(\sum_{j=0}^{k-1}(-1)^{j+n-k}\sum_{\ell=0}^{k-2}(-1)^{\ell}\tau_{\vec{\alpha}}^{(\vec{f}^j)^\ell}\bigg) \\ & =^{*} -\bigg(\sum_{j=0}^{k-1}(-1)^{j}\sum_{\ell=0}^{k-2}(-1)^{\ell}\tau_{\vec{\alpha}}^{(\vec{f}^j)^\ell}\bigg)\\ & =^{*} -\bigg(\sum_{j\leq \ell\leq k-2}(-1)^{j+\ell}\,\tau_{\vec{\alpha}}^{(\vec{f}^j)^\ell}+\sum_{\ell< j\leq k-1}(-1)^{j+\ell}\,\tau_{\vec{\alpha}}^{(\vec{f}^j)^\ell}\bigg) \\
& =^{*} -\bigg(\sum_{j\leq \ell\leq k-2}(-1)^{j+\ell}\,\tau_{\vec{\alpha}}^{(\vec{f}^{\ell+1})^j}+\sum_{\ell<j\leq k-1}(-1)^{j+\ell}\,\tau_{\vec{\alpha}}^{(\vec{f}^j)^\ell}\bigg) \\
& =^{*} -\bigg(\sum_{\ell<j\leq k-1}(-1)^{j+\ell+1}\,\tau_{\vec{\alpha}}^{(\vec{f}^{j})^\ell}+\sum_{\ell<j\leq k-1}(-1)^{j+\ell}\,\tau_{\vec{\alpha}}^{(\vec{f}^j)^\ell}\bigg)\\
& = 0\, .
\end{align*}
The fact that the functions $\tau_{\vec{\alpha}^i}^{\vec{f}^j}$ trivialize the functions $\varsigma_{\vec{\alpha}}^{\vec{f}^j}$ underlies the passage from the first line to the second; replace $\tau_{\vec{\alpha}}^{\vec{f}^j}$ with its definition at (\ref{inductivedefinitionofCkn}) to pass from the second line to the third. Nothing more than a regrouping underlies the passage from the third line to the fourth, whereupon the first sum vanishes by the $n$-coherence of $\Phi$. Simple bookkeeping converts the fifth line into the sixth, and the fact that $(\vec{f}^j)^\ell=(\vec{f}^{\ell+1})^j$ for all $j\leq\ell\leq k-2$ converts the sixth line into the seventh. A renaming of variables in the first sum then yields the eighth line, whose terms all plainly cancel.
\end{proof}
For future reference, we summarize this section's argument in the following lemma.
\begin{lemma}\label{summarypropagatinglemma}
Fix $n>1$ and cardinals $\kappa\leq\chi$ and let $\mathbb{P}=\mathrm{Add}(\omega,\chi)$. The following then holds in $V^{\mathbb{P}}$: suppose that
\begin{itemize}
\item $\Phi=\langle\varphi_{\vec{f}}\mid\vec{f}\in X^n\rangle$ is an $n$-coherent family of functions;
\item $A\subseteq X\subseteq\,^\omega\omega$ contains at least $\kappa$-many of the Cohen reals added by $\mathbb{P}$;
\item $\Phi\restriction A$ is trivial;
\item Any $j$-coherent family $\Psi=\langle\psi_{\vec{f}}\mid\vec{f}\in Y^j\rangle$ in which $1\leq j<n$ and $Y$ contains at least $\kappa$-many of the Cohen reals added by $\mathbb{P}$ is trivial.
\end{itemize}
Then $\Phi$ is trivial as well.
\end{lemma}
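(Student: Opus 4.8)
The plan is to formalize the inductive scheme sketched above. Fix $n$, $\kappa$, $\chi$, $\mathbb{P} = \mathrm{Add}(\omega,\chi)$, work in $V^{\mathbb{P}}$, and assume the four hypotheses. Starting from the trivialization $\mathscr{T}_1^n$ of $\Phi \restriction A$, I would define the families $\mathscr{C}_k^n$ and $\mathscr{T}_k^n$ by simultaneous recursion on $k$, $1 \le k \le n$, exactly as in equations (\ref{defCn1}) and (\ref{inductivedefinitionofCkn}): given $\mathscr{T}_k^n = \langle \tau_{\vec{\alpha}}^{\vec{f}} \mid \vec{f} \in X^{k-1},\ \vec{\alpha} \in A^{n-k} \rangle$ with the property that each $T_{n-k}^{\vec{f}} = \langle \tau_{\vec{\alpha}}^{\vec{f}} \mid \vec{\alpha} \in A^{n-k} \rangle$ trivializes $C_{n-k}^{\vec{f}}$ below $\wedge\vec{f}$, set $\varsigma_{\vec{\alpha}}^{\vec{f}} = \varphi_{\vec{\alpha}\vec{f}} + (-1)^{n-k+1} \sum_{i=0}^{k-1} (-1)^i \tau_{\vec{\alpha}}^{\vec{f}^i}$ for $\vec{f} \in X^k$, $\vec{\alpha} \in A^{n-k}$, let $C_{n-k}^{\vec{f}} = \langle \varsigma_{\vec{\alpha}}^{\vec{f}} \mid \vec{\alpha} \in A^{n-k} \rangle$, and (for $k < n$) apply the $(n-k)$-coherence Claim established above together with Proposition \ref{below_prop} and the fourth hypothesis (triviality of all $(n-k)$-coherent families indexed by sets containing $\kappa$-many Cohen reals — note $A$ itself contains $\kappa$-many, hence so does $A$ as a subset of $^\omega\omega$) to obtain a trivialization $T_{n-k-1}^{\vec{f}}$ of $C_{n-k}^{\vec{f}}$ below $\wedge\vec{f}$; collect these into $\mathscr{T}_{k+1}^n := \bigcup_{\vec{f}\in X^k} T_{n-k-1}^{\vec{f}}$. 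The base case $k=1$ is handled by the explicit computation already given, which shows $C_{n-1}^f$ is $(n-1)$-coherent below $f$. Alternation is preserved throughout since all the $\varphi$'s, $\tau$'s, and (by choice) the trivializations are alternating.

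When $k$ reaches $n$, we have $\mathscr{T}_n^n = \langle \tau^{\vec{f}} \mid \vec{f} \in X^{n-1} \rangle$ (here $A^{n-n} = A^0$ is a singleton, so the $\vec{\alpha}$-index disappears), with each $\tau^{\vec{f}}$ arising as a $1$-trivialization of the $1$-coherent family $C_1^{\vec{f}}$ below $\wedge\vec{f}$. It remains to verify item (ii): that $\Psi := \mathscr{T}_n^n$ is an $n$-trivialization of $\Phi$, i.e., $\sum_{i=0}^{n-1}(-1)^i \tau^{\vec{f}^i} =^* \varphi_{\vec{f}}$ for all $\vec{f} \in X^n$. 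The argument is the genericity argument from the $n=2$ claim above, generalized. Fix $\vec{f} \in X^n$ and suppose $E := \{(j,k) \in I(\wedge\vec{f}) \mid \sum_{i=0}^{n-1}(-1)^i \tau^{\vec{f}^i}(j,k) \ne \varphi_{\vec{f}}(j,k)\}$ is infinite. By the countable chain condition pick $W \in [\chi]^{\aleph_0}$ with $E \in V[G_W]$; by genericity, for $\beta \in A \setminus W$ (such $\beta$ exists since $|A| \ge \kappa > \aleph_0$) the set $I(f_\beta) \cap E$ is infinite. Then one chases the defining equation (\ref{inductivedefinitionofCkn}) at $k=n-1$ and the fact that $\tau^{\vec{f}^i}$ trivializes $C_1^{\vec{f}^i}$ to show $\sum_{i=0}^{n-1}(-1)^i \tau^{\vec{f}^i} =^* \varphi_{\vec{f}}$ on $I(f_\beta) \cap I(\wedge\vec{f})$, contradicting the infinitude of $I(f_\beta) \cap E$. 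More precisely: each $\tau^{\vec{f}^i}$ trivializes $C_1^{\vec{f}^i} = \langle \varsigma_{\beta}^{\vec{f}^i} \mid \beta \in A \rangle$, so $\tau^{\vec{f}^i} =^*$ (over $I(f_\beta \wedge \wedge\vec{f}^i)$) a fixed shift of $\varsigma_\beta^{\vec{f}^i} = \varphi_{\beta \vec{f}^i} + (-1)^{n-(n-1)+1}\sum_{j=0}^{n-2}(-1)^j \tau_\beta^{(\vec{f}^i)^j}$; taking the alternating sum over $i$ and invoking the $n$-coherence of $\Phi$ on $(\beta, f_0, \dots, f_{n-1})$ together with cancellation in the double $\tau$-sum (the same telescoping as in the $(n-k)$-coherence Claim) leaves exactly $\varphi_{\vec{f}}$ mod finite, restricted to $I(f_\beta) \cap I(\wedge\vec{f})$. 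Finally, by Observation \ref{eastofell} (absorbing the finite error and the "below $\wedge\vec{f}$" bookkeeping) this exhibits a genuine $n$-trivialization of $\Phi$, so $\Phi$ is trivial.

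The main obstacle is bookkeeping rather than conceptual: keeping the sign conventions, the index shifts between $\vec{\alpha}$-sequences and $\vec{f}$-sequences, and the "below $g$" domain restrictions all consistent across the recursion, and in particular making precise the double-sum cancellation identity (using $(\vec{f}^j)^\ell = (\vec{f}^{\ell+1})^j$ for $j \le \ell$, as in the displayed computation) at the level of the general recursion step rather than just in the base case. One should also be slightly careful that at each application of the inductive hypothesis the index set in play — a subset of $^\omega\omega$ of the form "$A$" or "$X$ restricted appropriately" — really does contain $\kappa$-many Cohen reals; this is immediate for $A$ by hypothesis and is what the hypothesis was tailored to supply, but it is worth stating explicitly since the families $C_{n-k}^{\vec{f}}$ are indexed by $A^{n-k}$, and $A$ contains $\kappa$-many Cohen reals by assumption. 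With these points handled, the proof is a direct assembly of the pieces developed in this section together with Theorem \ref{1d_theorem}, Proposition \ref{below_prop}, and Observation \ref{eastofell}.
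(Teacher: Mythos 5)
Your proposal is correct and follows essentially the same approach as the paper: the simultaneous recursion on $k$ producing $\mathscr{C}_k^n$ and $\mathscr{T}_k^n$ via equations (\ref{defCn1})--(\ref{inductivedefinitionofCkn}), the coherence claim, Proposition~\ref{below_prop} and the fourth hypothesis to climb down the inductive ladder, and a genericity argument to close the loop. In fact your write-up is slightly more explicit than the paper's at one point: the paper poses item (ii) --- that $\mathscr{T}_n^n$ really does trivialize $\Phi$ --- but only carries out the genericity verification explicitly for $n=2$, leaving the general case to the reader; you spell out that step, correctly observing that the coherence-claim computation specialized to $k=n$ only yields $\sum_i(-1)^i\tau^{\vec{f}^i}=^*\varphi_{\vec{f}}$ on domains intersected with some $I(f_\beta)$, and that a ccc-plus-genericity argument (choosing $\beta\in A\setminus W$ with $W$ countable capturing the putative infinite error set $E$) is what removes that $\beta$-dependence. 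One small wording nit: since $\tau^{\vec{f}^i}$ is a $1$-trivialization of $C_1^{\vec{f}^i}$, the relation is literally $\tau^{\vec{f}^i}=^*\varsigma_\beta^{\vec{f}^i}$ on the common domain, not merely ``a fixed shift of'' it; this does not affect the argument since any constant shift would cancel when you take the alternating sum and apply $n$-coherence, but it is cleaner to use the exact relation.
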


Note that, by Proposition \ref{below_prop}, the above lemma applies also to
$n$-coherent families below any fixed $g \in {^\omega}\omega$.

\section{Defining trivializations} \label{defining_trivializations_section}
To apply Lemma \ref{summarypropagatinglemma}, we must first show in the appropriate models $V^{\bb{P}}$ that $n$-coherent families $\Phi$ indexed by large numbers of Cohen reals always admit trivial restrictions $\Phi\restriction A$ to index-sets $A$ which are large in the settings of lower dimensions. This we argue by defining type II (i.e., finitely supported) trivializations of $\Phi\restriction A$. These definitions require variations on the machinery of \cite{SVHDL}; describing this machinery is the object of this section. Already in \cite{SVHDL}, however, this apparatus takes on a certain opacity; as this is, if anything, even more the case for the variations listed here, we conclude this section with several heuristic remarks.

\begin{definition}
  Suppose that $b$ is a finite set of ordinals.
  A \emph{subset-final segment} of $b$ of \emph{length $m$} is a
  sequence $\vec{a} = \langle a_i \mid 1 \leq i \leq m \rangle$ such that
  \begin{itemize}
    \item $m \leq |b|$,
    \item $a_1 \subseteq \cdots \subseteq a_m = b$, and
    \item $|a_i| = |a_1|+i-1$ for all $i$ with $1 \leq i \leq m$.
  \end{itemize}
  If $\vec{a}$ is a subset-final segment of $b$ and $|a_1| = 1$, then we
  say that $\vec{a}$ is a \emph{long string} or a \emph{long string for $b$}.
  Notice that in this case $m = |b|$. If $\vec{a}$ is not long, then it is \emph{short}.
\end{definition}

Suppose now that $X$ is a set of ordinals and we are working with an injective
sequence $\langle f_\alpha \mid \alpha \in X \rangle$ of elements of ${^\omega}\omega$.
(In the present context, this will always be a sequence of Cohen reals, but that is not
important for the results in this section.)
For each nonempty $\vec{\alpha} = \langle \alpha_k \mid k < n \rangle$ in $X^{<\omega}$,
let $I(\vec{\alpha})$ denote $\bigcap_{k < n} I(f_{\alpha_k})$.
Suppose that for each positive integer $n$ the family $\Phi_n =
\langle \varphi_{\vec{\alpha}} : I(\vec{\alpha}) \rightarrow \bb{Z} \mid
\vec{\alpha} \in X^n \rangle$ is $n$-coherent, and let $\vec{\Phi}$ denote the
family $\langle \varphi_{\vec{\alpha}} \mid \vec{\alpha} \in X^{<\omega} \rangle$.
Suppose also that to each nonempty $a \in [X]^{<\omega}$ we have assigned an ordinal
$\varepsilon_a \in X$ in such a way that
\begin{itemize}
  \item if $a = \{\alpha\}$, then $\varepsilon_a = \alpha$;
  \item if $a \subsetneq b$, then $\varepsilon_a < \varepsilon_b$.
\end{itemize}
Now, given a nonempty $b \in [X]^{<\omega}$ and a subset-final segment
$\vec{a} = \langle a_k \mid 1 \leq k \leq m \rangle$ of $b$, define the set
$d^{\vec{\varepsilon}}_{\vec{a}}$ as follows. If $|a_1| = 1$, then let
$d^{\vec{\varepsilon}}_{\vec{a}} = \{\varepsilon_{a_k} \mid 1 \leq k \leq m\}$.
Note that, in this case, $d^{\vec{\varepsilon}}_{\vec{a}} \in [X]^{|b|}$.
If $|a_1| > 1$, then let $d^{\vec{\varepsilon}}_{\vec{a}} =
a_1 \cup \{\varepsilon_{a_k} \mid 1 \leq k \leq m\}$. Note that, in this case,
$d^{\vec{\varepsilon}}_{\vec{a}} \in [X]^{|b|+1}$.

For $\vec{\alpha} \in X^{<\omega}$ of length at least two, let
\[
  e^{\vec{\Phi}}(\vec{\alpha}) = \sum_{i < |\vec{\alpha}|}(-1)^i \varphi_{\vec{\alpha}^i}.
\]
When the family $\vec{\Phi}$ is clear from context, we will omit it from
the superscript; similarly for the superscript of $d^{\vec{\varepsilon}}_{\vec{a}}$. Recall also our habit of viewing finite sets $a \in [X]^{<\omega}$
as sequences enumerated in increasing order; expressions like $e(a)$ should
be interpreted on this principle. Since each $\Phi_n$ is $n$-coherent, $e(\vec{\alpha})$ is finitely
supported for each $\vec{\alpha} \in X^{<\omega}$. Let $\mathsf{e}(\vec{\alpha})$
denote the restriction of $e(\vec{\alpha})$ to its support.

We will be interested in linear combinations $L$ of the form
\[
  \sum_{i < \ell} c_i e(\vec{\alpha}_i),
\]
where $\ell < \omega$, each $c_i$ is an integer, and each $\vec{\alpha}_i$ is
an element of $X^{<\omega}$ of length at least two.\footnote{In settings like these, notational choices are simply of the lesser evil. We will write $\vec{\alpha}_i(j)$ for the $j^{\mathrm{th}}$ element of $\vec{\alpha}_i$.} Given such a linear combination
$L$ and an ordinal $\varepsilon \in X$, we let the expression $L \ast \varepsilon$
denote
\[
  \sum_{i < \ell} c_i e(\vec{\alpha}_i {}^\frown \langle \varepsilon \rangle).
\]

For integers $n \geq 2$, we now define interrelated
\begin{itemize}
  \item linear combinations $\mathcal{A}^{\vec{\Phi}}_n(a)$, parametrized by
  $a \in [X]^n$, and
  \item linear combinations $\mathcal{C}^{\vec{\Phi}}_n(b)$, parametrized by
  $b \in [X]^{n+1}$.
\end{itemize}
We again omit the superscripts $\vec{\Phi}$ and $\vec{\varepsilon}$ and restriction-notations whenever they are contextually clear; as elsewhere, sums of functions in expressions like $\mathcal{C}_n(b)$ below should always be understood to be taken on the intersection of those functions' domains.

We begin our definitions by letting
\[
  \mathcal{A}_2(a) = e(a ^\frown \langle \varepsilon_a \rangle).
\]
for each $a \in [X]^2$. Next, suppose that $2 \leq n < \omega$ and $\mathcal{A}_n(a)$ has been defined for all
$a \in [X]^n$. Given $b \in [X]^{n+1}$, let
\begin{align}
  \mathcal{C}_n(b) &= e(b) - \sum_{i=0}^n (-1)^i \mathcal{A}_n(b^i), \text{ and } \\
  \mathcal{A}_{n+1}(b) &= (-1)^{n+1}\mathcal{C}_n(b) * \varepsilon_b. \label{C_step}
\end{align}

The following lemma is easily verified by induction on $n$, so its proof is left
to the reader.

\begin{lemma} \label{a_c_form_lemma}
  For all $b \in [X]^2$, letting $\vec{a} = \langle b \rangle$, we have $\mathcal{A}_2(b)
  = e(d_{\vec{a}})$.

  For all $n$ with $2 \leq n < \omega$ and all $b \in [X]^{n+1}$, we have:
  \begin{enumerate}
    \item $\mathcal{C}_n(b)$ is of the form
    \[
      e(b) + \sum_{i < \ell}c_i e(d_{\vec{a}_i}),
    \]
    where $\ell < \omega$ and, for each $i < \ell$, $c_i$ is an integer and $\vec{a}_i$
    is a short subset-final segment of some element of $[b]^n$.
    \item $\mathcal{A}_{n+1}(b)$ is of the form
    \[
      \sum_{i < \ell} c_i e(d_{\vec{a}_i}),
    \]
    where $\ell < \omega$ and, for each $i < \ell$, $c_i$ is an integer and
    $\vec{a}_i$ is a short subset-final segment of $b$.
  \end{enumerate}
\end{lemma}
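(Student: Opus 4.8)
The plan is to establish the displayed formula for $\mathcal{A}_2$ and the two numbered claims simultaneously by induction on $n$, carrying along the auxiliary observation that in the representations of $\mathcal{C}_n(b)$ and $\mathcal{A}_{n+1}(b)$ every subset-final segment $\vec{a}$ that occurs has $|a_1| > 1$. This auxiliary point trivializes the ``short versus long'' bookkeeping, since a subset-final segment whose first set has size greater than one is automatically short. The base case is immediate: for $a \in [X]^2$, put $\vec{a} = \langle a \rangle$, a subset-final segment of $a$ of length one with $|a_1| = 2 > 1$. Since $\{\alpha\} \subsetneq a$ for each $\alpha \in a$, monotonicity of $\vec{\varepsilon}$ gives $\varepsilon_a > \max(a)$, so the increasing enumeration of $a\,{}^\frown\langle \varepsilon_a\rangle$ is exactly $a \cup \{\varepsilon_a\} = d_{\vec{a}}$, whence $\mathcal{A}_2(a) = e(a\,{}^\frown\langle\varepsilon_a\rangle) = e(d_{\vec{a}})$.

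For the inductive step, fix $n \geq 2$ and assume the conclusion for $\mathcal{A}_n$ (the base case if $n = 2$; otherwise claim (2) with $n-1$ in place of $n$). The form of $\mathcal{C}_n(b)$ for $b \in [X]^{n+1}$ is then read off formally from its defining equation $\mathcal{C}_n(b) = e(b) - \sum_{i=0}^n (-1)^i \mathcal{A}_n(b^i)$: each $b^i$ lies in $[b]^n$, and by hypothesis each $\mathcal{A}_n(b^i)$ is a $\bb{Z}$-linear combination of terms $e(d_{\vec{a}})$ with $\vec{a}$ a short subset-final segment of $b^i$, all having $|a_1| > 1$; adjoining the single extra term $e(b)$ yields precisely form (1), and the auxiliary hypothesis persists.

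The one genuinely computational step is passing to $\mathcal{A}_{n+1}(b) = (-1)^{n+1}\mathcal{C}_n(b) * \varepsilon_b$ for $b \in [X]^{n+1}$, which I would do by applying $* \, \varepsilon_b$ termwise and identifying the results. Two facts drive this. First, for any subset-final segment $\vec{a}$ of any subset of $b$, monotonicity of $\vec{\varepsilon}$ yields $\varepsilon_b > \max(d_{\vec{a}})$, since the members of $d_{\vec{a}}$ are elements of $b$ together with ordinals $\varepsilon_{a_k}$ with $a_k \subsetneq b$, all strictly below $\varepsilon_b$; hence the increasing enumeration of $d_{\vec{a}}\,{}^\frown\langle \varepsilon_b\rangle$ is $d_{\vec{a}} \cup \{\varepsilon_b\}$, and similarly for $b\,{}^\frown\langle\varepsilon_b\rangle$. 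Second, if $\vec{a} = \langle a_1, \ldots, a_m \rangle$ is a subset-final segment of some $c \in [b]^n$, then $a_m = c \subseteq b$ and $|b| = n+1 = |c|+1 = |a_m|+1$, so $\vec{a}\,{}^\frown\langle b\rangle$ is a subset-final segment of $b$; inspecting the cases $|a_1| = 1$ and $|a_1| > 1$ in the definition of $d$ shows $d_{\vec{a}\,{}^\frown\langle b\rangle} = d_{\vec{a}} \cup \{\varepsilon_b\}$. Combining, each term $e(d_{\vec{a}})$ of $\mathcal{C}_n(b)$ contributes $e(d_{\vec{a}\,{}^\frown\langle b\rangle})$ to $\mathcal{A}_{n+1}(b)$, and the term $e(b)$ contributes $e(d_{\langle b\rangle})$, where $\langle b\rangle$ is the length-one subset-final segment of $b$ with $d_{\langle b\rangle} = b \cup \{\varepsilon_b\}$. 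This is form (2); and since each of $\vec{a}\,{}^\frown\langle b\rangle$ and $\langle b\rangle$ has first set of size greater than one (using $|b| = n+1 > 1$ and the auxiliary hypothesis on $\vec{a}$), every subset-final segment occurring is short and the auxiliary hypothesis is maintained. Feeding this back into the previous paragraph closes the induction and proves both claims.

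I do not expect a serious obstacle here; the only delicate point is the ``short'' bookkeeping, and it has been absorbed into the auxiliary hypothesis that every occurring $\vec{a}$ has $|a_1| > 1$. The one thing to keep straight throughout is the running catalogue of which set each $\vec{a}$ is a subset-final segment of --- an element of $[b]^n$ for the terms of $\mathcal{C}_n(b)$, and $b$ itself for the terms of $\mathcal{A}_{n+1}(b)$.
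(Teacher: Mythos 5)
Your proof is correct and is exactly the induction on $n$ that the paper alludes to but does not write out (the paper simply asserts the lemma is ``easily verified by induction on $n$''). The auxiliary invariant you carry — that every subset-final segment $\vec{a}$ occurring in the expressions has $|a_1| > 1$, so that ``short'' is automatic — is a clean way to organize the bookkeeping, and the key computational observation that $\varepsilon_b > \max(d_{\vec{a}})$ (by monotonicity of $\vec{\varepsilon}$) correctly identifies $e(d_{\vec{a}}{}^\frown\langle\varepsilon_b\rangle)$ with $e(d_{\vec{a}{}^\frown\langle b\rangle})$ in the $*\,\varepsilon_b$ step.
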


One consequence of this lemma is that, while the expressions defining $\mathcal{A}_n(a)$
and $\mathcal{C}_n(b)$ are constructed via a recursion involving $\vec{\Phi}$,
the actual values of $\mathcal{A}_n$ and $\mathcal{C}_n$ are only dependent on
$\Phi_n$, so we can meaningfully speak of them in situations in which we have
only $\Phi_n$, and not $\Phi_{m}$ for any $m \neq n$, before us. In addition,
the values of $\mathcal{A}_n(a)$ and $\mathcal{C}_n(b)$ are only dependent on
ordinals $\varepsilon_a$ for nonempty $c \subseteq a$ or $c \subseteq b$,
respectively, so, when working just with the expressions
$\mathcal{A}_n(a)$ or $\mathcal{C}_n(b)$, we need not require that $\varepsilon_c$
is defined for any $c$ that is not a subset of $a$ or $b$, respectively. Finally,
if $b \in [X]^{n+1}$ and $(j,k) \in \omega \times \omega$ is an element of
$I(f_{\varepsilon_a})$ for all nonempty
$a \subseteq b$, then $(j,k)$ is in the domain of $\mathcal{C}_n(b)$.

The following is a consequence of \cite[Lemma 6.4]{SVHDL}.

\begin{lemma} \label{cancellation_fact}
  Suppose that $2 \leq n < \omega$, $b \in [X]^{n+1}$, $(j,k) \in \omega \times
  \omega$, and the following two statements hold.
  \begin{itemize}
    \item There exists a single integer $w$ such that
    $\mathsf{e}(d_{\vec{a}})(j,k)
    = w$ for every long string $\vec{a}$ for $b$.
    \item $(j,k) \in I(f_{\varepsilon_a})$ for all nonempty $a \subseteq b$.
  \end{itemize}
  Then $\mathcal{C}_n(b)(j,k) = 0$.
\end{lemma}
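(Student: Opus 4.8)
The plan is to argue by induction on $n \geq 2$, unwinding the recursive definitions of $\mathcal{C}_n$ and $\mathcal{A}_n$ and showing that, under the stated hypotheses, all contributions cancel when evaluated at $(j,k)$. The key preliminary observation is that, by Lemma \ref{a_c_form_lemma}, $\mathcal{C}_n(b)$ is a $\mathbb{Z}$-linear combination of terms $e(d_{\vec{a}})$ where $\vec{a}$ ranges over subset-final segments of elements of $[b]^{\leq n}$, together with the single ``diagonal'' term $e(b)$; and each $d_{\vec{a}}$ is built only from $a_1$ together with ordinals of the form $\varepsilon_c$ for $c \subseteq b$. The second hypothesis — that $(j,k) \in I(f_{\varepsilon_a})$ for all nonempty $a \subseteq b$ — guarantees that $(j,k)$ lies in the domain of every $e(d_{\vec{a}})$ appearing, and also (since $b$ itself consists of such $\varepsilon_{\{\alpha\}} = \alpha$) in the domain of $e(b)$, so the evaluation $\mathcal{C}_n(b)(j,k)$ is well-defined and we may compute termwise.

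First I would reduce to the case treated in \cite[Lemma 6.4]{SVHDL} by observing that our first hypothesis is precisely a ``local'' version of the coherence-type input that lemma requires: it asserts that the finitely-supported error terms $\mathsf{e}(d_{\vec{a}})$ take a common value $w$ at $(j,k)$ across all long strings $\vec{a}$ for $b$. The content of \cite[Lemma 6.4]{SVHDL} (as invoked here) is an algebraic identity showing that the alternating combination of the $\mathcal{A}_n(b^i)$ against $e(b)$, which by definition is $\mathcal{C}_n(b)$, is — up to the long-string terms — a telescoping expression. So the proof amounts to: (1) expand $\mathcal{C}_n(b)(j,k) = e(b)(j,k) - \sum_{i=0}^n (-1)^i \mathcal{A}_n(b^i)(j,k)$; (2) substitute the inductive form of each $\mathcal{A}_n(b^i)$ from Lemma \ref{a_c_form_lemma}(2), noting each is a combination of $e(d_{\vec{a}})$ for $\vec{a}$ a short subset-final segment of $b^i$; (3) observe that the \emph{long} strings for $b$ arise exactly from prepending singleton bottoms to the short strings at the top level, and that their $e(d_{\vec{a}})$-values all collapse to expressions involving only $\mathsf{e}(d_{\vec{a}})(j,k)$, hence all equal a common multiple of $w$; (4) cite \cite[Lemma 6.4]{SVHDL} for the combinatorial identity that makes the coefficients of these common-value terms sum to $0$, and makes the remaining short-string terms cancel against one another in telescoping fashion.

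Concretely, I expect the induction to run as follows. For $n = 2$: $\mathcal{C}_2(b) = e(b) - \sum_{i=0}^2 (-1)^i \mathcal{A}_2(b^i) = e(b) - \sum_{i=0}^2 (-1)^i e(b^i {}^\frown \langle \varepsilon_{b^i}\rangle)$. Here the long strings for $b = \{\beta_0,\beta_1,\beta_2\}$ are the three sequences $\langle \{\beta_i\}, \{\beta_i,\beta_j\}, b\rangle$, and one checks directly — using the $2$-coherence of $\Phi_2$ to rewrite $e(b)$ at $(j,k)$ and the hypothesis that $\mathsf{e}(d_{\vec{a}})(j,k) = w$ uniformly — that the value is $0$. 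For the inductive step, substitute \eqref{C_step}, i.e. $\mathcal{A}_n(b^i) = (-1)^n \mathcal{C}_{n-1}(b^i) * \varepsilon_{b^i}$, into the defining sum; the $* \varepsilon_{b^i}$ operation appends $\varepsilon_{b^i}$ to each $\vec{\alpha}_j$ occurring in $\mathcal{C}_{n-1}(b^i)$, and the inductive description of $\mathcal{C}_{n-1}$ lets one track exactly which $d_{\vec{a}}$'s result. The bookkeeping identity — that the alternating sum over $i$ of these appended combinations, plus $e(b)$, reduces at $(j,k)$ to a sum of long-string error terms with coefficients summing to zero — is the heart of \cite[Lemma 6.4]{SVHDL}, which I would invoke rather than rederive.

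\textbf{Main obstacle.} The genuine difficulty is purely combinatorial: correctly matching up the short subset-final segments of the various $b^i$ with the subset-final segments of $b$ under the prepend-and-append operations, so that the telescoping is visible and the long strings (the ones not cancelled by telescoping) are exactly those on which hypothesis one gives a common value $w$. This indexing is exactly what \cite[Lemma 6.4]{SVHDL} was designed to handle, so the ``proof'' here should be little more than a careful statement of how the present hypotheses feed into that lemma — verifying that ``$\mathsf{e}(d_{\vec{a}})(j,k)$ is constant across long strings $\vec{a}$ for $b$'' is the specialization of that lemma's hypothesis to a single point $(j,k)$, and that the domain condition ensures everything in sight is defined at $(j,k)$. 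I would therefore keep the write-up short: state the reduction, note the two hypotheses match the two hypotheses of \cite[Lemma 6.4]{SVHDL} pointwise, and conclude.
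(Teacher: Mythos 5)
Your approach coincides with the paper's: the paper likewise does not rederive the combinatorial identity but simply records the lemma as ``a consequence of \cite[Lemma 6.4]{SVHDL}'' and explains in a short remark how the present notation and hypotheses map onto that lemma (the set $b$ playing the role of $\tau$, the two bulleted hypotheses playing the role of $\mathfrak{u}_n(\tau)$). Your additional sketch of the underlying induction is fine in spirit, though slightly imprecise in places (e.g.\ a triangle $b$ has six long strings, not three); since both you and the paper ultimately defer the bookkeeping to \cite[Lemma 6.4]{SVHDL}, this does not affect the verdict.
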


\begin{remark}
  We briefly describe how Lemma \ref{cancellation_fact} follows
  from the argument of \cite[Lemma 6.4]{SVHDL}. The set $b$ here corresponds to
  $\tau$ in that result. The expression $\mathcal{S}_n(\tau)$ in \cite{SVHDL}
  is an auxiliary expression that always equals $0$. The two assumptions
  in Lemma \ref{cancellation_fact} play the role of the statement $\mathfrak{u}_n(\tau)$
  from \cite{SVHDL}.
\end{remark}

As we will see, the significance of these definitions is the following: the expressions $\mathcal{A}_n(a)$ will correspond to the elements of candidate type II trivializations of $\Phi\restriction A$. Under the conditions of Lemma \ref{cancellation_fact}, the expressions $\mathcal{C}_n(b)$ amount to verifications that these families of expressions $\mathcal{A}_n(a)$ do indeed trivialize $\Phi\restriction A$. These relations, together with the necessity of working coordinatewise, as in Lemma \ref{cancellation_fact}, are points we expand on in the following remark. Though not strictly needed for the continuation of our argument, it is hoped that it may be clarifying.

\begin{remark} The variability in $|d_{\vec{a}}|$ noted above, depending on whether $|a_1|=1$, underscores the unique status of the sets $d_{\vec{a}}$ which are indexed by long strings; it is on the cancellations between their associated terms $\mathsf{e}(d_{\vec{a}})$ that the desired relations between the other terms in $\mathcal{C}_n(b)$ depend. This we hope to illuminate by the following diagram and discussion.

  \begin{figure}[H]
  \centering
  \begin{tikzpicture}
  \fill[fill=gray, opacity=.2] (0,0) -- (6,0) -- (3,.7) -- cycle;
  \fill[fill=gray, opacity=.2] (0,0) -- (3,5.196) -- (2.106,2.248) -- cycle;
  \fill[fill=gray, opacity=.2] (6,0) -- (3,5.196) -- (3.752,2.248) -- cycle;
  \draw (0,0) node[circle, fill=white, inner sep=1pt]{$0$}
    -- (6,0) node[circle, fill=white, inner sep=1pt]{$1$}
      -- (3,.7) node[circle, fill=white, inner sep=1pt]{$01$}
    -- cycle;
    \draw (0,0) node[circle, fill=white, inner sep=1pt]{$0$}
    -- (3,5.196) node[circle, fill=white, inner sep=1pt]{$2$}
      -- (2.106,2.248) node[circle, fill=white, inner sep=1pt]{$02$}
    -- cycle;
    \draw (6,0) node[circle, fill=white, inner sep=1pt]{$1$}
    -- (3,5.196) node[circle, fill=white, inner sep=1pt]{$2$}
      -- (3.752,2.248) node[circle, fill=white, inner sep=1pt]{$12$}
    -- cycle;
  \draw (3,5.196) node[circle, fill=white, inner sep=1pt]{$2$}
  -- (3,1.732) node[circle, fill=white, inner sep=1pt]{$012$};
  \draw (6,0) node[circle, fill=white, inner sep=1pt]{$1$}
  -- (3,1.732) node[circle, fill=white, inner sep=1pt]{$012$};
  \draw (0,0) node[circle, fill=white, inner sep=1pt]{$0$}
  -- (3,1.732) node[circle, fill=white, inner sep=1pt]{$012$};
  \draw (2.106,2.248) node[circle, fill=white, inner sep=1pt]{$02$}
  -- (3,1.732) node[circle, fill=white, inner sep=1pt]{$012$};
  \draw (3.752,2.248) node[circle, fill=white, inner sep=1pt]{$12$}
  -- (3,1.732) node[circle, fill=white, inner sep=1pt]{$012$};
  \draw (3,.7) node[circle, fill=white, inner sep=1pt]{$01$}
  -- (3,1.732) node[circle, fill=white, inner sep=1pt]{$012$};
  \draw (2.2,1.7) node {$w$};
  \draw (3.8,1.7) node {$w$};
  \draw (2.55,1.1) node {$w$};
  \draw (3.45,1.1) node {$w$};
  \draw (2.65,2.4) node {$w$};
  \draw (3.32,2.4) node {$w$};
  \end{tikzpicture}
  \caption{The subdivision organizing the $n=2$ case.}
  \label{subdivision_figure}
  \end{figure}
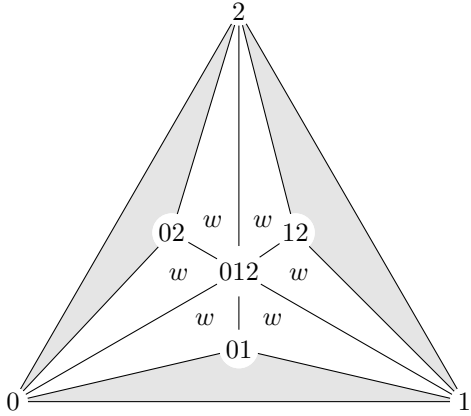

The $n$-tuples $\vec{f}$ structuring our various coherent families $\Phi$ are naturally viewed as simplices, with $(n-1)$-tuples $\vec{f}^i$ $(i<n)$ as faces. Within this view, strings of increasing subsets $\dots\subsetneq a_i\subsetneq\cdots$ of $b=\vec{f}$ correspond to simplices in the \emph{barycentric subdivision} of the simplex $b$ (see, e.g., \cite[Chapter 3.3]{spanier}). For example, the white inner region of Figure \ref{subdivision_figure} corresponds to the barycentric subdivision of the triangle with vertices $0$, $1$, and $2$ (we bend that region's edges for reasons soon to be made clear): the $2$-faces of that subdivided region are $\{0,01,012\}$, $\{1,01,012\}$, etc., each corresponding to inclusion-increasing sequences of nonempty subsets of $\{0,1,2\}$.

Fix now a $2$-coherent family of functions $\Phi$. Let $a =\{\alpha_0,
\alpha_1, \alpha_2\} \in [X]^3$ and suppose that the hypotheses of Lemma
\ref{cancellation_fact} hold. For readability, we let, for example,
$\varepsilon_{0,1}$ denote $\varepsilon_{\alpha_0, \alpha_1}$ (in particular,
$\varepsilon_{\ell} = \alpha_\ell$ for $\ell < 3$). Writing $\varphi_{0,1}$ for
the function indexed by $(f_{\varepsilon_0},f_{\varepsilon_1})$, and so on,
what are wanted are \emph{finitely supported} functions $\psi_{0,1}$, and so on, whose differences reproduce those among the corresponding functions of $\Phi$, as described in Proposition \ref{trivial_equivalence_fact}. The idea of the above machinery is to derive these finitely supported functions from the coherence of $\Phi$ itself, as the differences between carefully chosen families of functions $\varphi$; at the stage $n=2$, for example, we will have
\begin{align}\label{schematic_A2}\psi_{0,1}:=\mathcal{A}_2(\varepsilon_0,\varepsilon_1)=\varphi_{1,01}-\varphi_{0,01}+\varphi_{0,1}.\end{align}
  Visually, this definition corresponds to the grey triangle at the base of Figure \ref{subdivision_figure}, under the natural association of the functions $\varphi_{1,01}$, $\varphi_{0,01}$, and $\varphi_{0,1}$ with the edges $\{1,01\}$, $\{0,01\}$, and $\{0,1\}$, respectively. Under this correspondence, the desired relation
\begin{align}\label{schematic_equality}\varphi_{1,2}-\varphi_{0,2}+\varphi_{0,1}=\psi_{1,2}-\psi_{0,2}+\psi_{0,1}\end{align}
may be viewed as asserting the equality of the oriented sum of the functions $\varphi$ associated to the boundary of the triangle $\{0,1,2\}$ with that of the functions $\varphi$ associated to the boundary of the grey region. (As should be clear, this is a deliberately schematic discussion; we return to the question of the argument of these functions below.) This holds precisely because of the first bulleted ``long string'' condition listed in Lemma \ref{cancellation_fact}, which amounts in the present context to the boundary sums associated to the triangles $\{0,01,012\}$, $\{1,01,012\}$, etc., all equaling $w$. As these are oriented sums, they entail cancellations, so that first, the boundary sum associated to $\{0,1,01\}$ may be identified with that associated to $\{0,1,012\}$, and second, such identifications for each of the grey triangles cancel inside the triangle $\{0,1,2\}$, leaving nothing summed but its boundary, just as equation \ref{schematic_equality} requires.

At the arithmetic level, all of this manifests (with only minor notational adjustments) as exactly the two types of cancellations in the summed equations concluding Section 6 of \cite{SVHDL}. The ensuing simplification of that sum is an instance of what Lemma \ref{cancellation_fact} records as $\mathcal{C}_n(b)=0$, which translates, in turn, to the equation \ref{schematic_equality} we had desired.

The subdivision perspective sketched above is valuable for returning sense to what appear here or in \cite{SVHDL} as rather opaque and complicated algebraic identities: the meaning of those identities is that, in forcing extensions, higher-dimensional $\Delta$-systems can determine trivializing structures within $n$-coherent families by uniformizing the boundary sums associated to the $n$-faces of the barycentric subdivision of any $(n+1)$-tuple of indices, viewed as a simplex. This perspective clarifies the passage from one dimension to the next, as well; as the interested reader may verify, in the $n=3$ case, along with with the face $\{0,1,2\}$, the $2$-faces pictured in Figure \ref{subdivision_figure} play within a tetrahedron exactly the role that the boundaries of the grey faces had played within a triangle in the case of $n=2$. Put differently, the way verification-expressions $\mathcal{C}_n$ figure in the trivializing expressions $\mathcal{A}_{n+1}$ of the next level, as in equation \ref{C_step}, amounts to little other than the fact that the restriction of the barycentric subdivision of an $(n+1)$-simplex to any $n$-face is a barycentric subdivision of that face.

Complicating the above considerations, however, is the issue of domain: for the right-hand side of equations like (\ref{schematic_equality}) to truly be trivializing in the sense of Proposition \ref{trivial_equivalence_fact}, the domain of $\psi_{0,1}$, like that of $\varphi_{0,1}$, must be $I(f_{\varepsilon_0}\wedge f_{\varepsilon_1})$, and similarly for the functions $\psi_{0,2}$ and $\psi_{1,2}$. If $\psi_{0,1}$ is defined as in equation \ref{schematic_A2}, then this amounts to a requirement that $f_{\varepsilon_{01}}\geq f_{\varepsilon_0}\wedge f_{\varepsilon_1}$, which, if the functions indexed are Cohen reals, can never be the case. This is a requirement we can only meet locally, choosing for each $(j,k)\in I(f_{\varepsilon_0}\wedge f_{\varepsilon_1})$ an $\varepsilon_{01}^{j,k}$ such that $(j,k)\in I(f_{\varepsilon_{01}^{j,k}})$. This is the approach we take, and this is the meaning of the parameter $(j,k)$ appearing in Lemma \ref{cancellation_fact}. The good news in this approach is that equations like (\ref{schematic_equality}) hold if and only if they hold coordinatewise, so that our arithmetic is essentially unaffected. The bad news is that functions like $\psi_{0,1}$ may now fail to be finitely supported, and much of the work of the following section is towards ensuring that they will be.
\end{remark}
\section{The cases of $n>1$}\label{highernsection}
We turn now to the remainder of the proof of our main theorem. We will prove the more precise statement given immediately below. The cardinals $\lambda_n$ appearing therein are defined by recursion on $n\geq 1$ as follows. First, as in Theorem \ref{1d_theorem}, let $\lambda_1 = \beth_1^+$; then, for all $n >1$, let $\lambda_n =
\sigma(\lambda^+_{n-1}, 2n+1)$ (again see Definition \ref{sigmadefinition} for the notation $\sigma(\,\cdot\,,\,\cdot\,)$). Note that $\sup\{\lambda_n \mid n < \omega\} = \beth_\omega$.
It is also readily verified that each $\lambda_n$ is
${<}\aleph_1$-inaccessible; in consequence, since $\lambda_n$ is a successor cardinal,
$\lambda_n^+$ is also ${<}\aleph_1$-inaccessible.

\begin{theorem}\label{maintheorem}
  Let $n$ be a positive integer, let $\chi \geq \lambda_n$ be a cardinal, and let
  $\bb{P} = \mathrm{Add}(\omega, \chi)$. The following then holds in $V^{\bb{P}}$:
  For any set $X \subseteq {^\omega} \omega$ containing at least $\lambda_n$-many
  of the Cohen reals added by $\bb{P}$, every $n$-coherent family
  $\Phi = \langle \varphi_{\vec{f}} \mid \vec{f} \in X^n\rangle$ indexed by $X$
  is trivial.
\end{theorem}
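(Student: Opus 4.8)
The plan is to argue by induction on $n$. The base case $n = 1$ is precisely Theorem~\ref{1d_theorem} (note $\lambda_1 = \beth_1^+ > \beth_1$). So fix $n > 1$ and assume the theorem for every $j < n$. Applied in $V^{\bb{P}}$ (legitimately, since $\chi \ge \lambda_n \ge \lambda_j$) and combined with the monotonicity of $\langle \lambda_j \rangle$, this hypothesis is exactly the fourth bullet of Lemma~\ref{summarypropagatinglemma} for $\kappa = \lambda_{n-1}$. By that lemma (and the remark after it), it therefore suffices to prove the following \emph{first phase} assertion in $V^{\bb{P}}$: every $n$-coherent $\Phi = \langle \varphi_{\vec f} \mid \vec f \in X^n \rangle$ with $X$ containing at least $\lambda_n$-many of the Cohen reals added by $\bb{P}$ admits an $A \subseteq X$ that contains at least $\lambda_{n-1}$-many of those reals and with $\Phi \restriction A$ trivial. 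As in Theorem~\ref{1d_theorem}, one argues this below an arbitrary condition $p$ forcing the relevant hypotheses about names $\dot{X}, \dot{\Phi}$, and produces $q_\emptyset \le p$ forcing such an $A$, together with a type~II trivialization of $\Phi \restriction A$, to exist.

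The opening steps mirror those of Theorem~\ref{1d_theorem}. In $V$, set $Y = \{\alpha < \chi \mid \exists\, p_\alpha \le p\ (p_\alpha \Vdash \dot{f}_\alpha \in \dot{X})\}$, so $|Y| \ge \lambda_n$; as $\bb{P}$ is $\lambda_n$-Knaster, fix $Y' \in [Y]^{\lambda_n}$ with $\{p_\alpha \mid \alpha \in Y'\}$ pairwise compatible. For each $b \in [Y']^{2n+1}$ fix a \emph{master condition} $r_b \le p$ which extends $p_\alpha$ for every $\alpha \in b$, forces $b \subseteq u(r_b)$, and --- since $\dot{\Phi}$ is forced $n$-coherent, so that each $e(\vec\gamma)$ (computed from $\dot{\Phi}$) is forced finitely supported for sequences $\vec\gamma$ from $\dot{X}$ --- decides, for every length-$(n+1)$ subsequence $\vec\gamma$ of $b$, the finite support $\mathsf e(\vec\gamma)$ of $e(\vec\gamma)$ together with $e(\vec\gamma) \restriction \mathsf e(\vec\gamma)$; by the remarks following Lemma~\ref{a_c_form_lemma} this pins down all the expressions $\mathcal A_n$, $\mathcal C_n$ that will occur. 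Now apply Fact~\ref{delta_systems_fact} with the role of its ``$n$'' played by $2n+1$, with $\kappa = \aleph_1$ and $\lambda = \lambda_{n-1}^+$, so that $\mu = \sigma(\lambda_{n-1}^+, 2n+1) = \lambda_n$ --- legitimate, since $\lambda_{n-1}^+$ is ${<}\aleph_1$-inaccessible and each $u(r_b)$ is finite --- coloring $b \in [Y']^{2n+1}$ by $\overline{r_b}$ together with the finite data just decided. This gives an $H \in [Y']^{\lambda_{n-1}^+}$, homogeneous for the coloring, with $\langle u(r_b) \mid b \in [H]^{2n+1} \rangle$ a uniform $(2n+1)$-dimensional $\Delta$-system; shrinking $H$ so that $\otp(H) = \lambda_{n-1}^+$, apply Lemma~\ref{extension_lemma} to obtain conditions $r_a$ for $a \in [H]^{<2n+1}$ and a root $r_\emptyset =: q_\emptyset$, with $q_\emptyset \le p$ and $r_{\{\alpha\}} \le p_\alpha$ for each $\alpha \in H$ exactly as in Theorem~\ref{1d_theorem}.

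It remains to define and verify the trivialization; here the genuine work starts. Fix in $V$ a sparse $A_0 \subseteq H$ with $\otp(A_0) = \lambda_{n-1}$ and wide gaps between successive elements. A pre-density argument in the style of Claim~\ref{unbounded_1d_claim} shows $q_\emptyset$ forces $A := \{\alpha \in A_0 \mid r_{\{\alpha\}} \in \dot{G}\}$ to be cofinal in $A_0$, hence of size $\lambda_{n-1}$, with $f_\alpha \in X$ for all $\alpha \in A$. Following the closing remark of Section~\ref{defining_trivializations_section}, define $\Psi = \langle \psi_a \mid a \in [A]^n \rangle$ coordinatewise: for $a \in [A]^n$ and $(j,k) \in I(\wedge \vec f_a)$, choose an $\varepsilon$-assignment $c \mapsto \varepsilon^{j,k}_c$ with values in $H$, respecting the constraints of Section~\ref{defining_trivializations_section} and with $(j,k) \in I(f_{\varepsilon^{j,k}_c})$ for every relevant $c$, and set $\psi_a(j,k) = \mathcal A_n(a)(j,k)$ computed from these $\varepsilon^{j,k}_c$; extend $\Psi$ alternatingly from $[A]^n$ to $A^n$. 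The $\varepsilon$-assignments and the sparseness of $A_0$ must be engineered so that, for every relevant $(j,k)$, each set $d_{\vec a}$ occurring in an $\mathcal A_n(a)$ or $\mathcal C_n(b)$ ($a \in [A]^n$, $b \in [A]^{n+1}$) lands at a \emph{fixed} position-pattern inside a $(2n+1)$-element subset of $H$. Homogeneity then yields two things. First, the decided support of each such $e(d_{\vec a})$ is a single finite set, independent of $(j,k)$, so each $\psi_a$ is well-defined and finitely supported. Second, there is a single integer $w$ with $\mathsf e(d_{\vec a})(j,k) = w$ for every long string $\vec a$ for $b$; together with the fact that suitable master conditions governing $b$ can be located in $\dot{G}$ (via the $\Delta$-system extension properties, as in Claim~\ref{agreement_claim}), this is the hypothesis of Lemma~\ref{cancellation_fact}, whose conclusion $\mathcal C_n(b)(j,k) = 0$ amounts --- by Proposition~\ref{trivial_equivalence_fact} and the definitions of $\mathcal A_n$, $\mathcal C_n$ --- to the assertion that $\Psi$ trivializes $\Phi \restriction A$. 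Feeding this into Lemma~\ref{summarypropagatinglemma} completes the inductive step.

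I expect the main obstacle to be the ``fixed position-pattern'' requirement above: one must design the $\varepsilon$-assignments, the sparse set $A_0$, and the $(2n+1)$-dimensional indexing so that, simultaneously, (i) every localized family of sets $d_{\vec a}$ lands at one canonical pattern, so that homogeneity applies both to finite support and to the long-string condition; (ii) for each $(j,k)$, enough ordinals $\varepsilon^{j,k}_c \in H$ with $(j,k)$ below their Cohen-real graphs remain available --- a density fact to be verified in $V[G]$; and (iii) the $(j,k)$-dependent pieces glue to a genuinely finitely supported $\psi_a$. This coordinatewise, forcing-sensitive refinement of the arithmetic closing Section~6 of \cite{SVHDL} is, I believe, exactly what forces both the dimension $2n+1$ in the definition of $\lambda_n$ and the loss of cardinality from $\lambda_n$ down to $\lambda_{n-1}$ at each step of the induction.
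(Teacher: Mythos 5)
Your high-level plan matches the paper's proof: induct on $n$, reduce the inductive step via Lemma~\ref{summarypropagatinglemma} to producing a small trivializable $\Phi\restriction A$, apply Fact~\ref{delta_systems_fact} to $(2n+1)$-tuples with $\lambda = \lambda_{n-1}^+$ and $\mu = \lambda_n$, then build a coordinatewise type~II trivialization using the $\mathcal{A}_n$/$\mathcal{C}_n$ machinery and Lemma~\ref{cancellation_fact}. You also correctly identify where the real difficulty lies. But there is one concrete choice in your setup that would make the construction fail, and it is not a detail you have merely deferred: you propose fixing a single master condition $r_b$ per $b \in [Y']^{2n+1}$ that decides the supports $\mathsf{e}(\vec\gamma)$. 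The difficulty is that in $V[G]$ almost none of these $r_b$ will lie in $G$, so their decisions give you no information about the actual values $\mathsf{e}(d^{j,k}_{\vec a})$ in $V[G]$. You need those values \emph{in the extension}, both to verify that each $\psi_a$ is finitely supported and to obtain the single integer $w$ required by the long-string hypothesis of Lemma~\ref{cancellation_fact}. A density argument in the style of Claim~\ref{agreement_claim} does not rescue this, since there you only need to \emph{refute} a bad event forced by some $r \in G$; here you must \emph{positively read off} $\mathsf{e}$-values, which requires a deciding condition to actually be in $G$. The paper handles this by fixing, for each $a \in [Y']^{n+1}$, a \emph{maximal antichain} $\{q_{a,\ell} \mid \ell < \omega\}$ below $\bigcup_{\alpha\in a}p_\alpha$ of conditions deciding $\dot{\mathsf{e}}(a)$; maximality guarantees that for any relevant $d$, exactly one $q_{d,\ell_d}$ is in $G$, which yields $\mathsf{e}(d) = \mathsf{e}_{\ell_d}$ in $V[G]$. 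The coding function $F$ then records, for every index pattern and every $\ell$, the triple $(\bar q_{b[\mathbf{m}],\ell}, w^b_{\mathbf{m},\ell}, \mathsf{e}_{b[\mathbf{m}],\ell})$, so that homogeneity applies uniformly across all antichain members, not merely across one condition per $b$. The recursive choice of $\varepsilon^{j,k}_a$ is then engineered precisely so that the ``right'' $\ell$ (namely $\ell = 0$ for long strings and $\ell = \ell_{a_1}$ for short ones with $j > j_b$) is the one realized in $G$.

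A secondary point: your ``sparse $A_0 \subseteq H$ with wide gaps'' is a placeholder for what the paper actually does, which is to split $H$ into $n+1$ consecutive blocks $H_0 < H_1 < \cdots < H_n$ each of order type $\lambda_{n-1}$, take $\dot A \subseteq H_0$, and require $\varepsilon^{j,k}_a \in H_{|a|-1}$. This stratification is what pins down the ``fixed position pattern'' of each $d^{j,k}_{\vec a}$ inside a $(2n+1)$-tuple from $H$, so that homogeneity (together with Claims~\ref{ds_claim_1}--\ref{ds_claim_3} and Claim~\ref{compatible_claim}) applies. You correctly flag this as the main obstacle; just be aware the solution is this concrete block partition rather than sparsity within a single set. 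With the antichains in place and the $H_i$ stratification, the rest of your outline tracks the paper's argument.
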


\begin{proof} The proof is by induction on $n$. The case
  $n=1$ was that of Theorem \ref{1d_theorem}. Therefore fix an $n >1$ and suppose the theorem proven for all
  positive $m < n$. Also fix a cardinal $\chi \geq \lambda_n$, $\bb{P}$-names $\dot{X}$ and
  $\dot{\Phi} = \langle \dot{\varphi}_{\dot{\vec{f}}} \mid \dot{\vec{f}} \in
  \dot{X}^n \rangle$, and a condition $p\in \bb{P}$ such that
  \begin{itemize}
    \item $p \Vdash ``|\{\alpha < \chi \mid \dot{f}_\alpha \in \dot{X}\}| \geq
    \lambda_n"$, and
    \item $p \Vdash ``\dot{\Phi} \text{ is an }n\text{-coherent family}"$.
  \end{itemize}

  We will find a $q \leq p$ and a $\bb{P}$-name $\dot{A}$ such that $q$ forces the
    following statements:
    \begin{itemize}
      \item $|\dot{A}| \geq \lambda_{n-1}$;
      \item $\{\dot{f}_\alpha \mid \alpha \in \dot{A}\} \subseteq \dot{X}$;
      \item $\dot{\Phi} \restriction \dot{A}$ is trivial.
    \end{itemize}
    It will be clear from our argument below that each induction step of our proof conserves the hypotheses of Lemma \ref{summarypropagatinglemma}. That
    lemma will therefore apply to show that $q$ in fact forces that $\dot{\Phi}$ is
    trivial; this will conclude the induction step of the proof and, therefore,
    the proof itself.

    As before, begin by letting $Y$ be the set of $\alpha < \chi$ for which there
    is a condition $p_\alpha \leq p$ such that $p_\alpha \Vdash ``\dot{f}_\alpha
    \in \dot{X}"$; observe that $|Y| \geq \lambda_n$ by assumption. For each
    $\alpha \in Y$, fix such a condition $p_\alpha$. Since $\bb{P}$ is
    $\lambda_n$-Knaster, there exists a set $Y' \subseteq Y$ of size $\lambda_n$
    such that $\{p_\alpha \mid \alpha \in Y'\}$ consists of pairwise compatible
    conditions. Note that, for all $a \in [Y']^{<\omega}$, we have
    $\bigcup_{\alpha \in a} p_\alpha \in \bb{P}$.

    Given a $\bb{P}$-name $\dot{\vec{h}} = \langle \dot{h}_0, \ldots, \dot{h}_n \rangle$
    for an element of $({^\omega}\omega)^{n+1}$, let $\dot{e}(\dot{\vec{h}})$
    be a $\bb{P}$-name that is forced to be equal to
    \[
    \sum_{i = 0}^n (-1)^i \varphi_{\vec{h}^i}
    \] if $\dot{\vec{h}} \in \dot{X}^{n+1}$ and is forced
    to be $0$ otherwise. Since $p$ forces that $\Phi$ is $n$-coherent, any extension
    of $p$ will force that $\dot{e}(\dot{\vec{h}})$ is a finitely-supported function
    from a subset of $\omega \times \omega$ into $\bb{Z}$. Let $\dot{\mathtt{e}}(\dot{\vec{h}})$
    be a $\bb{P}$-name for the restriction of $\dot{e}(\dot{\vec{h}})$ to its support.
    For all $a \in [Y']^{n+1}$, let $\dot{e}(a)$ denote $\dot{e}(\langle
    \dot{f}_\alpha \mid \alpha \in a \rangle)$.

    For each $a \in [Y']^{n+1}$ let $\langle q_{a, \ell} \mid \ell < \omega \rangle$
    enumerate a maximal antichain $\mathcal{A}_a$ of conditions in $\bb{P}$ below
    $\bigcup_{\alpha \in a} p_\alpha$ such that each $q_{a, \ell}$ decides the value of $\dot{\mathsf{e}}(a)$ to be equal to some
    finite partial function $\mathsf{e}_{a, \ell} \in V$. Recall that, for $p \in \bb{P}$,
    $u(p)$ is the set $\{\alpha < \chi \mid \dom(p) \cap (\{\alpha\} \times \omega) \neq
    \emptyset\}$. For readability, let $u(a,\ell)$ denote $u(q_{a,\ell})$.

    For each $b \in [Y']^{2n+1}$ let $v_b = \bigcup \{u(a,\ell) \mid a \in [b]^{n+1},
    ~ \ell < \omega \}$. Define a ``coding'' function $F:[Y']^{2n+1} \rightarrow H(\omega_1)$ as
    follows. First, for each $b \in [Y']^{2n+1}$, each $\mb{m} \in [2n+1]^{n+1}$ and each $\ell < \omega$,
    let
    \[
      w^b_{\mb{m}, \ell} = \{\eta < \otp(v_b) \mid v_b(\eta) \in u(b[\mb{m}], \ell)\}.
    \]
    (Note that $w^b_{\mb{m}, \ell} \in [\otp(v_b)]^{<\omega}$). Then, for each $b \in [Y']^{2n+1}$, let
    \[
      F(b) = \langle (\bar{q}_{b[\mb{m}], \ell}, w^b_{\mb{m}, \ell},
      \mathsf{e}_{b[\mb{m}], \ell}) \mid \mb{m} \in [2n+1]^{n+1}, ~ \ell < \omega \rangle.
    \]
    Recall that $\lambda_n = \sigma(\lambda^+_{n-1}, 2n+1)$ and $\lambda^+_{n-1}$ is
    ${<}\aleph_1$-inaccessible. Therefore, by Fact \ref{delta_systems_fact}, there exists $H \in [Y']^{\lambda^+_{n-1}}$ such that
    \begin{itemize}
      \item $F$ is constant on $[H]^{2n+1}$, taking value $\langle (\bar{q}_{\mb{m}, \ell},
      w_{\mb{m}, \ell}, \mathsf{e}_{\mb{m}, \ell}) \mid \mb{m} \in [2n+1]^{n+1}, \ell < \omega \rangle$, and
      \item $\langle v_b \mid b \in [H]^{2n+1} \rangle$ is a uniform $(2n+1)$-dimensional
      $\Delta$-system.
    \end{itemize}
    By taking an initial segment of $H$ if necessary, we can assume that
    $\otp(H) = \lambda^+_{n-1}$.
    Let $\rho$ and $\langle \mb{r}_{\mb{m}} \mid \mb{m} \subseteq 2n+1 \rangle$
    witness that $\langle v_b \mid b \in [H]^{2n+1} \rangle$ is a uniform $(2n+1)$-dimensional
    $\Delta$-system.

    Let $\langle v_a \mid a \in [H]^{<2n+1} \rangle$ be given
    by Lemma \ref{extension_lemma} applied to $\langle v_b \mid b \in [H]^{2n+1} \rangle$.
    We will actually need slightly more than what Lemma \ref{extension_lemma} gives us.
    Given $a \in [H]^n$, $k \leq n$, and $\alpha \in H$, we say that $\alpha$ is
    \emph{$k$-addable for $a$} if $\alpha \notin a$ and $|a \cap \alpha| = k$. In
    other words, $\alpha$ is $k$-addable to $a$ if, letting $a' = a \cup \{\alpha\}$,
    we have $|a'| = n+1$ and $a'(k) = \alpha$. Given an $a \in [H]^n$ and a $k \leq n$
    such that there is at least one $\alpha \in H$ that is $k$-addable for $a$, define
    $v_{a, k}$ as follows. Let $\alpha \in H$ be such that $\alpha$ is addable for
    $a$, let $b \in [H]^{2n+1}$ be such that $b[n+1] = a \cup \{\alpha\}$, and
    let $v_{a,k} = v_b[\mb{r}_{(n+1) \setminus \{k\}}]$.

    \begin{claim} \label{ds_claim_1}
      For each $a \in [H]^n$ and $k \leq n$ for which $v_{a,k}$ is defined, the
      value of $v_{a,k}$ is independent of our choice of $\alpha$ and $b$.
    \end{claim}

    \begin{proof}
      Suppose that $\alpha, \alpha' \in H$ are both $k$-addable for $a$ and
      $b,b' \in [H]^{2n+1}$ are such that $b[n+1] = a \cup \{\alpha\}$ and
      $b[n+1] = a \cup \{\alpha'\}$. We will show that $v_b[\mb{r}_{(n+1) \setminus \{k\}}]
      = v_{b'}[\mb{r}_{(n+1) \setminus \{k\}}]$.

      First, fix $c \in [H]^n$ such that $\min(c) > \max(b \cup b')$, let
      $d = a \cup \{\alpha\} \cup c$, and let $d' = a \cup \{\alpha'\} \cup c$.
      Then $b$ and $d$ are aligned, with $\mb{r}(b,d) = n+1$, and
      $b'$ and $d'$ are aligned, with $\mb{r}(b',d') = n+1$. Also,
      $d$ and $d'$ are aligned, with either $\mb{r}(d,d') = n+1$ (if $\alpha = \alpha'$)
      or $\mb{r}(d, d') = (n+1) \setminus \{k\}$ (if $\alpha \neq \alpha'$). Altogether,
      it follows that
      \[
        v_b[\mb{r}_{(n+1) \setminus \{k\}}] = v_d[\mb{r}_{(n+1) \setminus \{k\}}] =
        v_{d'}[\mb{r}_{(n+1) \setminus \{k\}}] = v_{b'}[\mb{r}_{(n+1) \setminus \{k\}}],
      \]
      as desired.
    \end{proof}

    \begin{claim} \label{ds_claim_2}
      Suppose that $a \in [H]^n$ and $k \leq n$ are such that $v_{a,k}$ is defined.
      Then the collection
      \[
        \{v_{a \cup \{\alpha\}} \mid \alpha \in H \text{ is addable for } a\}
      \]
      is a (1-dimensional) $\Delta$-system, with root $v_{a, k}$.
    \end{claim}

    \begin{proof}
      Fix $\alpha < \alpha'$ in $H$ such that both $\alpha$ and $\alpha'$ are
      $k$-addable for $a$. Fix $c \in [H]^n$ such that $\min(c) > \max(a \cup \{\alpha,
      \alpha'\})$. Let $b = a \cup \{\alpha\} \cup c$ and $b' = a \cup \{\alpha'\}
      \cup c$. Then $v_{a \cup \{\alpha\}} = v_b[\mb{r}_{n+1}]$,
      $v_{a \cup \{\alpha'\}} = v_{b'}[\mb{r}_{n+1}]$, and
      $v_{a,k} = v_b[(n+1) \setminus \{k\}] = v_{b'}[(n+1) \setminus \{k\}]$.
      Moreover, $b$ and $b'$ are
      aligned with $\mb{r}(b,b') = (2n+1) \setminus \{k\}$, so
      $v_b \cap v_{b'} = v_b[(2n+1) \setminus \{k\}] =
      v_{b'}[(2n+1) \setminus \{k\}]$. Altogether, this implies
      \begin{align*}
        v_{a \cup \{\alpha\}} \cap v_{a \cup \{\alpha'\}}
        &= v_b[\mb{r}_{n+1}] \cap v_{b'}[\mb{r}_{n+1}] \\
        &= v_b[\mb{r}_{n+1}] \cap v_{b'}[\mb{r}_{n+1}]
        \cap v_b[(2n+1) \setminus \{k\}] \cap v_{b'}[(2n+1) \setminus \{k\}] \\
        &= v_b[\mb{r}_{(n+1) \setminus \{k\}}] \cap v_{b'}[\mb{r}_{(n+1) \setminus \{k\}}] \\
        &= v_{a, k} \cap v_{a,k} = v_{a,k}.
      \end{align*}
      Therefore, $\{v_{a \cup \{\alpha\}} \mid \alpha \in H \text{ is addable for } a\}$
      is a $\Delta$-system with root $v_{a,k}$.
    \end{proof}

    \begin{claim} \label{ds_claim_3}
      Let $a \in [H]^n$, $k \leq n$, and $\ell < \omega$, and suppose that
      $\alpha, \alpha' \in H$ are both $k$-addable for $a$. Then
      $q_{a \cup \{\alpha\}} \restriction (v_{a,k} \times \omega) =
      q_{a \cup \{\alpha'\}} \restriction (v_{a,k} \times \omega)$.
    \end{claim}

    \begin{proof}
      We can assume that $\alpha \neq \alpha'$, as otherwise the claim is trivial.
      Fix $c \in [H]^n$ with $\min(c) > \max(a \cup \{\alpha, \alpha'\})$, and
      let $b = a \cup \{\alpha\} \cup c$ and $b' = a \cup \{\alpha'\} \cup c$.
      By Claim \ref{ds_claim_1}, we have $v_{a,k} = v_b[\mb{r}_{(n+1) \setminus \{k\}}]
      = v_{b'}[\mb{r}_{(n+1) \setminus \{k\}}]$.

      Now suppose that $(\gamma, j) \in \dom(q_{a \cup \{\alpha\}}) \cap
      (v_{a,k} \times \omega)$. Then there is $\eta \in \mb{r}_{(n+1) \setminus \{k\}}$ such that
      $\gamma = v_b(\eta)$; moreover, $\eta \in w_{n+1, \ell}$ and, since
      $b$ and $b'$ are aligned with $\mb{r}(b,b') \supseteq (n+1) \setminus \{k\}$,
      we have $\gamma = v_{b'}(\eta)$. Let $i < \omega$ be such that
      $\eta = w_{n+1, \ell}(i)$. Then, since $\bar{q}_{a \cup \{\alpha\}, \ell} =
      \bar{q}_{a \cup \{\alpha'\}, \ell} = \bar{q}_{n+1, \ell}$, we have $(\gamma, j) \in
      \dom(q_{a \cup \{\alpha'\}})$ and
      \[
        q_{a \cup \{\alpha'\}}(\gamma, j) = \bar{q}_{n+1, \ell}(i, j) =
        q_{a \cup \{\alpha\}}(\gamma, j).
      \]

      A symmetric argument shows that, if $(\gamma, j) \in \dom(q_{a \cup \{\alpha'\}})
      \cap (v_{a,k} \times \omega)$, then $(\gamma, j) \in \dom(q_{a \cup \{\alpha\}})$
      and $q_{a \cup \{\alpha\}}(\gamma, j) = q_{a \cup \{\alpha'\}}(\gamma, j)$.
      It follows that $q_{a \cup \{\alpha\}} \restriction (v_{a,k} \times \omega) =
      q_{a \cup \{\alpha'\}} \restriction (v_{a,k} \times \omega)$.
    \end{proof}

    We next note that the values of $\bar{q}_{\mb{m}, \ell}$ and $\mathsf{e}_{\mb{m}, \ell}$
    are independent of $\mb{m} \in [2n+1]^{n+1}$. Indeed, suppose that $b \in [H]^{2n+1}$,
    $\mb{m} \in [2n+1]^{n+1}$, and $a = b[\mb{m}]$. Then we can find a $b^* \in [H]^{2n+1}$
    for which $a = b^*[n+1]$ (i.e., $a$ is an initial segment of $b^*$). But then
    \[
      \bar{q}_{\mb{m}, \ell} = \bar{q}_{b[\mb{m}], \ell} = \bar{q}_{a, \ell} =
      \bar{q}_{b^*[n+1], \ell} = \bar{q}_{n+1, \ell},
    \]
    and similarly for $\mathsf{e}_{\mb{m}, \ell}$. Hence we may in fact fix a sequence
    $\langle (\bar{q}_\ell, \mathsf{e}_\ell) \mid \ell < \omega \rangle$ such that
    $(\bar{q}_{\mb{m}, \ell}, \mathsf{e}_{\mb{m}, \ell}) = (\bar{q}_\ell, \mathsf{e}_\ell)$
    for all $\mb{m} \in [2n]^{n+1}$.

   Now let $H_0, H_1, \ldots, H_n$ be subsets of $H$ such that  $\otp(H_k) = \lambda_{n-1}$ and $H_k < H_{k'}$ for all $k < k' \leq n$. For each $k \leq n$ let $\delta_k = \min(H_k)$, and let
    $d = \{\delta_k \mid k \leq n\}$. Let $q = q_{d,0}$; note that $q\leq p$. Let $d^+ = \{\delta_k \mid 1 \leq k \leq n\}$ and for all $\alpha \in H_0$
    let $d_\alpha = \{\alpha\} \cup d^+$ be the result of replacing $\delta_0$ by
    $\alpha$ in $d$. Let $\dot{A}$ be a $\bb{P}$-name for the set of
    $\alpha \in H_0$ such that $q_{d_\alpha, 0} \in \dot{G}$. We claim
    that $q$ and $\dot{A}$ are as desired; more precisely, we claim that $q$ forces that
    \begin{enumerate}
      \item[(i)] $|\dot{A}| \geq \lambda_{n-1}$,
      \item[(ii)] $\{\dot{f}_\alpha \mid \alpha \in \dot{A}\} \subseteq \dot{X}$, and
      \item[(iii)] $\dot{\Phi} \restriction \dot{A}$ is trivial.
    \end{enumerate}
    We can dispense with the first two of these items fairly quickly.

    \begin{claim}\label{Aunboundedclaim}
      $q \Vdash ``\dot{A} \text{ is unbounded in }H_0"$.
    \end{claim}

    \begin{proof}
      Fix an arbitrary $\gamma \in H_0$ and an arbitrary condition $r \leq q$. We will find
      an $\alpha \in H_0 \setminus \gamma$ such that $q_{d_\alpha, 0}$ is compatible
      with $r$. A routine density argument will then establish the claim.

      By Claim \ref{ds_claim_1}, $\{v_{d_\alpha} \mid \alpha \in H_0 \setminus \gamma\}$ is an
      infinite (1-dimensional) $\Delta$-system with root $v_{d^+, 0}$. Therefore, there exists an $\alpha \in H_0 \setminus \gamma$ such that $v_{d_\alpha} \setminus v_{d^+, 0}$
      is disjoint from $u(r)$. By Claim \ref{ds_claim_3} $q_{d_\alpha, 0} \restriction (v_{d^+, 0} \times \omega) =
      q_{d,0} \restriction (v_{d^+, 0} \times \omega)$; in consequence, since $r$ extends $q_{d,0} = q$, it is compatible with $q_{d_\alpha, 0}$, as desired.
    \end{proof}

    It follows immediately that $q$ forces $\dot{A}$ to have cardinality $\lambda_{n-1}$. To see item (ii) above, notice that $q_{d_\alpha, 0} \Vdash ``
    \dot{f}_\alpha \in \dot{X}"$ for all $\alpha\in H_0$, since $q_{d_\alpha, 0} \leq p_\alpha$. It follows that $\{\dot{f}_\alpha \mid
    \alpha \in \dot{A}\}$ is forced to be a subset of $\dot{X}$.

    We now show that $q$ forces that $\dot{\Phi} \restriction \dot{A}$ is trivial;
    this verification will be considerably more involved. Let $G$ be $\bb{P}$-generic
    over $V$ with $q \in G$. We will begin by working in $V[G]$. To denote the interpretations of $\bb{P}$-names in
    $V[G]$ we simply remove their dots, e.g., the interpretation of $\dot{A}$ is
    $A$, the interpretation of $\dot{\Phi}$ is $\Phi$, etc. Much as above, for
    $a \in [\chi]^{<\omega}$, let $I(a)$ denote $I(\wedge\{f_\alpha \mid \alpha \in a\})$.

    For each $a \in [A]^{\leq n}$ with $|a| > 1$ let $d_a = a \cup
    \{\delta_i \mid |a| - 1 \leq i < n\}$. Notice that $d_a \in [H]^{n+1}$
    (we emphasize that $\delta_n \notin d_a$ in this case). Since $p_\alpha \in G$
    for all $\alpha \in d_a$, there is some unique $\ell_a < \omega$
    for which $q_{d_a, \ell_a} \in G$.

    For each $a \in [A]^{\leq n}$ with $|a| = 1$, i.e., if $a=\{\alpha\}$ for some $\alpha\in A$, let $d_a=d_\alpha$. Again we then have $d_a \in [H]^{n+1}$; since
    $q_{d_\alpha, 0} \in G$ for all $\alpha\in A$, we let $\ell_a = 0$
    for any such singleton $a$.

    For each nonempty $a \in [A]^{\leq n+1}$ let $j_a$ equal
    \[
    \max\{j < \omega \mid \text{there exists a nonempty } a' \in [a]^{\leq n} \text{ such that }
      (\gamma, j) \in \dom(q_{d_{a'}, \ell_{a'}})\text{ for some }\gamma < \chi\}.
    \]
    In particular, there is a single $j^* < \omega$ such that
    \[
      j_{\{\alpha\}} = j^* = \max\{j < \omega \mid \exists \eta < \omega ~ (\eta,j) \in \dom(\bar{q}_0)\}
    \]
    for every $\alpha \in A$.
    For ease of notation, let $I(a)_{>j^*}$ denote
    $\{(j,k) \in I(a) \mid j > j^*\}$.
    Notice that the $j_a$ terms are monotonic: if $a' \subseteq a$, then $j_{a'}
    \leq j_a$.

    We now recursively define ordinals
    \[
      \langle \varepsilon^{j,k}_a \mid a \in [A]^{\leq n+1} \text{ is nonempty and }
      (j,k) \in I(a)_{>j^*} \rangle.
    \]
These ordinals will play, for each $(j,k)\in  I(a)_{>j^*}$, the role of the sequence $\vec{\varepsilon}$ described in Section \ref{defining_trivializations_section}. They will satisfy the following properties:
    \begin{enumerate}
      \item $\varepsilon^{j,k}_a \in H_{|a|-1}$ for all nonempty $a \in [A]^{\leq n+1}$ and all $(j,k) \in I(a)_{>j^*}$;
      \item $\varepsilon^{j,k}_{\{\alpha\}} = \alpha$ for all $\alpha \in A$ and all $(j,k) \in I(f_\alpha)_{>j^*}$;
      \item $(j,k) \in I(f_{\varepsilon^{j,k}_a})$ for all nonempty $a \in [A]^{\leq n+1}$ and all $(j,k) \in I(a)_{>j^*}$.
    \end{enumerate}

    The ordinals $\varepsilon^{j,k}_a$ will satisfy an additional property as well; to state it, we will need some further notation. Suppose that
    $b \in [A]^{\leq n+1}$ is a nonempty set and $\vec{a} = \langle a_i \mid
    1 \leq i \leq m \rangle$ is a subset-final segment of $b$. Suppose also
    that $(j,k) \in I(b)_{>j^*}$. Then, recalling the notation $d^{\vec{\varepsilon}}_{\vec{a}}$ (or $d_{\vec{a}}$, simply) from Section \ref{defining_trivializations_section}, we define
    $d^{j,k}_{\vec{a}} \in [H]^{n+1}$ as follows, splitting into cases depending on whether or
    not $|a_1| > 1$.
     First, if $|a_1| = 1$ (and hence if $|b| = m$), then let
    \[
      d^{j,k}_{\vec{a}} = \{\varepsilon^{j,k}_{a_i} \mid 1 \leq i \leq m\} \cup
      \{\delta_i \mid m \leq i \leq n\}.
    \]
    Notice that in this case $|d^{j,k}_{\vec{a}} \cap H_i| = 1$ for all
    $i \leq n$.

    Next, if $|a_1| > 1$ and $|b| \leq n$, then let
    \[
      d^{j,k}_{\vec{a}} = a_1 \cup \{\varepsilon^{j,k}_{a_i} \mid 1 \leq i \leq m\}
      \cup \{\delta_i \mid |a_1| + m -1 \leq i < n\}.
    \]
    Notice that in this case $|d^{j,k}_{\vec{a}} \cap H_0| = |a_1|$,
    $d^{j,k}_{\vec{a}} \cap H_i = \emptyset$ for $0 < i < |a_1|-1$ and $i = n$,
    and $|d^{j,k}_{\vec{a}} \cap H_i| = 1$ for $|a_1|-1 \leq i < n$. If $|a_1| > 1$
    and $|b| = n+1$, then leave $d^{j,k}_{\vec{a}}$ undefined. In any
    case, if $d^{j,k}_{\vec{a}}$ is defined, then $d^{j,k}_{\vec{a}} \in [H]^{n+1}$.

    We may now state our final requirement for the ordinals $\varepsilon^{j,k}_a$:
    \begin{enumerate}
      \item[(4)] For every nonempty $b \in [A]^{\leq n+1}$, every subset-final segment
      $\vec{a} = \langle a_i \mid 1 \leq i \leq m \rangle$ of $b$, and every $(j,k) \in I(b)_{>j^*}$,
      \begin{enumerate}
        \item if $|a_1| = 1$, then $q_{d^{j,k}_{\vec{a}}, 0} \in G$;
        \item if $|a_1| > 1$ and $|b| \leq n$ and $j > j_b$, then
        $q_{d^{j,k}_{\vec{a}}, \ell_{a_1}} \in G$.
      \end{enumerate}
    \end{enumerate}
    A main resource for the construction of the ordinals $\varepsilon^{j,k}_a$
    will be Claim \ref{compatible_claim} below. To facilitate its statement, we
    introduce the following terminology and convention:
    if $c \in [H]^{n+1}$, $i \leq n$, and $\alpha \in H$, then we say that
    $\alpha$ is \emph{$i$-possible for $c$} if the following two statements hold:
    \begin{itemize}
      \item if $i > 0$, then $\alpha > c(i-1)$;
      \item if $i<n$, then $\alpha < c(i+1)$.
    \end{itemize}
    Intuitively, $\alpha$ is $i$-possible for $c$ if $c(i)$ can be replaced by
    $\alpha$ without changing the positions of the other elements of $c$ within the set.
    If $\alpha$ is $i$-possible for $c$, then $c[i \mapsto \alpha]$ denotes this replacement, i.e., it denotes the
    set $(c \setminus \{c(i)\}) \cup \{\alpha\}$.

    \begin{claim} \label{compatible_claim}
      Suppose that $c_0, c_1 \in [H]^{n+1}$, $i_0, i_1 \leq n$, and $c_0(i_0) = c_1(i_1)$.
      Suppose also that $\alpha \in H$ is $i_0$-possible for $c_0$ and $i_1$-possible for $c_1$
      and $\ell_0, \ell_1 < \omega$ are such that $q_{c_0, \ell_0}$ and
      $q_{c_1, \ell_1}$ are compatible in $\bb{P}$. Then $q_{c_0[i_0 \mapsto \alpha],
      \ell_0}$ and $q_{c_1[i_1 \mapsto \alpha], \ell_1}$ are also compatible in
      $\bb{P}$.
    \end{claim}

    \begin{proof}
      If $c_0(i_0) = \alpha$, then there is nothing to prove, so assume that
      $c_0(i_0) \neq \alpha$. Let $c = c_0 \cup c_1$. Since $c_0$ and $c_1$
      share at least one element, we know that $|c| \leq 2n+1$. Let $b \in [H]^{2n+1}$
      be a (possibly trivial) end-extension of $c$ such that every element of $b \setminus c$
      is greater than $\alpha$. Let $i^* \leq 2n$ be such that
      $b(i^*) = c_0(i_0) = c_1(i_1)$, and let $b^* = (b \setminus \{c_0(i_0)\}) \cup
      \{\alpha\}$. Since $\alpha$ is $i_0$-possible for $c_0$ and $i_1$-possible
      for $c_1$, and since all elements of $b \setminus c$ are greater than $\alpha$,
      we know that $b$ and $b^*$ are aligned and $\mb{r}(b, b^*) = (2n+1)\setminus\{i^*\} =: \mb{m}$.
      Let $\mb{m}_0, \mb{m}_1 \in [2n+1]^{n+1}$ be such that $c_0 = b[\mb{m}_0]$
      and $c_1 = b[\mb{m}_1]$, and hence such that $c_0[i_0 \mapsto \alpha] = b^*[\mb{m}_0]$
      and $c_1[i_1 \mapsto \alpha] = b^*[\mb{m}_1]$.

      Suppose for the sake of contradiction that $q_{c_0[i_0 \mapsto \alpha],
      \ell_0}$ and $q_{c_1[i_1 \mapsto \alpha], \ell_1}$ are incompatible in
      $\bb{P}$. Then there is a $(\gamma^*, j)$ in the intersection of their domains such that
      \[
        q_{c_0[i_0 \mapsto \alpha], \ell_0}(\gamma^*, j) \neq q_{c_1[i_1 \mapsto \alpha],
        \ell_1}(\gamma^*, j).
      \]
      Suppose that $\eta < \rho$ is such that $\gamma^* = u_{b^*}(\eta)$. Let
      $\gamma = u_b(\eta)$. Since $F(b^*) = F(b)$, and in particular since
      $w^b_{\mb{m}_0, \ell_0} = w^{b^*}_{\mb{m}_0, \ell_0}$,
      $w^b_{\mb{m}_1, \ell_1} = w^{b^*}_{\mb{m}_1, \ell_1}$,
      $\bar{q}_{b[\mb{m}_0], \ell_0} = \bar{q}_{b^*[\mb{m}_0], \ell_0}$,
      and $\bar{q}_{b[\mb{m}_1], \ell_1} = \bar{q}_{b^*[\mb{m}_1], \ell_1}$,
      we know that $(\gamma, j)$ is in the domain of both $q_{c_0, \ell_0}$ and
      $q_{c_1, \ell_1}$ and also that $q_{c_0, \ell_0}(\gamma, j) =
      q_{c_0[i_0 \mapsto \alpha], \ell_0}(\gamma^*, j)$ and $q_{c_1, \ell_1}(\gamma, j)
      = q_{c_1[i_1 \mapsto \alpha], \ell_1}(\gamma^*, j)$. It follows
      that $q_{c_0, \ell_0}(\gamma, j) \neq q_{c_1, \ell_1}(\gamma, j)$, but this contradicts
      our assumption that $q_{c_0, \ell_0}$ and $q_{c_1, \ell_1}$ are compatible
      in $\bb{P}$.
    \end{proof}

    We turn now more directly to the construction of the family of ordinals
    \[
      \langle \varepsilon^{j,k}_a \mid a \in [A]^{\leq n+1} \text{ is nonempty and }
      (j,k) \in I(a)_{>j^*} \rangle
    \]
    satisfying the requirements (1)--(4) listed above.
    The construction is by recursion on $|a|$. If $\alpha \in A$ and $(j,k) \in
    I(f_\alpha)_{>j^*}$, then condition (2) dictates that $\varepsilon^{j,k}_{\{\alpha\}}
    = \alpha$. Conditions (1)--(3) are then trivially satisfied. To see condition (4), observe that the only subset-final segment of $\{\alpha\}$ for any $\alpha \in A$ is
    $\vec{a}=\langle \{\alpha\} \rangle$; by the definition of $A$, we then have
    $q_{d^{j,k}_{\vec{a}}, 0} = q_{d_\alpha, 0} \in G$, just as required by condition (4a). This concludes the cases in which $|a|=1$.

    Next suppose that $b \in [A]^{\leq n}$ and $|b| \geq 2$; fix $(j,k) \in I(b)_{>j^*}$ and suppose also that we have defined $\varepsilon^{j,k}_a$ for all nonempty $a \subsetneq b$.
    We will define an $\varepsilon^{j,k}_b \in H_{|b|-1}$.
    Suppose that $\vec{a} = \langle a_i \mid 1 \leq i \leq m \rangle$ is a
    subset-final segment of $b$. If $m = 1$, then define $d_{\vec{a}}^-$ to be $d_b$ (see again
    the third paragraph after Claim \ref{Aunboundedclaim} for the definition of $d_b$). If $m > 1$, then notice that $\vec{a}^- := \langle a_i \mid 1 \leq i
    <m \rangle$ is a subset-final segment of $a_{m-1}$, and define
    $d_{\vec{a}}^-$ to be $d^{j,k}_{\vec{a}^-}$. Observe that $q_{d^-_{\vec{a}}, \ell_{a_1}} \!\in G$ in either case: in the $m=1$ case, this follows from
    the definition of $\ell_{a_1} = \ell_b$. In the $m>1$ case, this follows from
    our inductive hypothesis that statement (4) holds when applied to $a_{m-1}$.
    If $\vec{a}$ is a long string (i.e., if $|a_1| = 1$), then let $i_{\vec{a}} = |b-1|$,
    and if $\vec{a}$ is a short string (i.e., if $|a_1| > 1$), then let
    $i_{\vec{a}} = |b|$. Notice that, in any case, we have $d^-_{\vec{a}}(i_{\vec{a}})
    = \delta_{|b| - 1}$ and, once we have defined $\varepsilon^{j,k}_b$, we will have
    $d^{j,k}_{\vec{a}} = d^-_{\vec{a}}[i_{\vec{a}} \mapsto \varepsilon^{j,k}_b]$.

    Now, to see that we can find an ordinal $\varepsilon^{j,k}_b$ satisfying
    conditions (1)--(4), move back to $V$ and fix an arbitrary $r \in \bb{P}$
    extending
    \[
      q^* := \bigcup \{q_{d^-_{\vec{a}}, \ell_{a_1}} \mid \vec{a} = \langle a_i \mid
      i \leq m \rangle \text{ is a subset-final segment of }b\},
    \]
    which we know to be in $G$. We will find a condition $s \leq r$ and an ordinal
    $\varepsilon$ such that $s$ forces $\varepsilon$ to be a valid choice for
    $\varepsilon^{j,k}_{b}$. By the preceding paragraph and Claim \ref{ds_claim_2},
    we know that for every subset-final
    segment $\vec{a}$ of $b$, the collection $\{v_{d^-_{\vec{a}}[(i_{\vec{a}})
    \mapsto \varepsilon]} \mid \varepsilon \in H_{|b|-1}\}$ is an infinite (1-dimensional)
    $\Delta$-system with root $v_{d^-_{\vec{a}} \setminus \{\delta_{|b|-1}\}, i_{\vec{a}}}$.
    Therefore we can fix an $\varepsilon \in H_{|b| - 1}$ such that
    $\varepsilon \notin u(r)$ and such that $v_{d^-_{\vec{a}}[i_{\vec{a}}
    \mapsto \varepsilon]} \setminus v_{d^-_{\vec{a}} \setminus \{\delta_{|b|-1}\}, i_{\vec{a}}}$
    is disjoint from $u(r)$ for every subset-final segment
    $\vec{a}$ of $b$. For each such $\vec{a}$, Claim \ref{ds_claim_3} implies that
    \[
    q_{d^-_{\vec{a}}[i_{\vec{a}}\mapsto \varepsilon], \ell_{a_1}} \restriction
    (v_{d^-_{\vec{a}} \setminus \{\delta_{|b|-1}\}, i_{\vec{a}}} \times \omega) =
    q_{d^-_{\vec{a}},\ell_{a_1}} \restriction (v_{d^-_{\vec{a}} \setminus \{\delta_{|b|-1}\},
    i_{\vec{a}}} \times \omega),
    \]
    and we know that $r$ extends $q_{d^-_{\vec{a}},\ell_{a_1}}$. Therefore,
    $q_{d^-_{\vec{a}}[i_{\vec{a}}\mapsto \varepsilon], \ell_{a_1}}$
    is compatible with $r$. Moreover, for all pairs $\vec{a}$ and $\vec{a}^*$
    of subset-final segments of $b$, since $q_{d^-_{\vec{a}}, \ell_{a_1}}$ and $q_{d^-_{\vec{a}^*}, \ell_{a^*_1}}$
    are both in $G$ and are therefore compatible, Claim \ref{compatible_claim}
    implies that $q_{d^-_{\vec{a}}[i_{\vec{a}}\mapsto \varepsilon], \ell_{a_1}}$
    and $q_{d^-_{\vec{a}^*}[i_{\vec{a}^*}\mapsto \varepsilon], \ell_{a^*_1}}$ are
    compatible.

    We now split into two cases. Suppose first that $j \leq j_b$, so that we need to
    satisfy requirement (4a) but not (4b). Let
    \[
      s_0 = r \cup \bigcup \{q_{d^-_{\vec{a}}[i_{\vec{a}}\mapsto \varepsilon], \ell_{a_1}}
      \mid \vec{a} \text{ is a subset-final segment of } b \text{ and }
      |a_1| = 1\}.
    \]
    By the previous paragraph, $s_0$ is a condition in $\bb{P}$. Notice also that $\ell_{a_1} = 0$ for all such $\vec{a}$ in the above union, by our inductive condition (4a). By the definition of $j^*$, the fact that $j > j^*$, and the fact that
    $\varepsilon \notin u(r)$, we know that $(\varepsilon, j) \notin \dom(s_0)$.
    Therefore we can extend $s_0$ to a condition $s$ such that
    $(\varepsilon, j) \in \dom(s)$ and $s(\varepsilon, j) \geq k$, i.e.,
    $s \Vdash ``(j,k) \in I(\dot{f}_\varepsilon)"$. This $s$ in fact forces that letting $\varepsilon^{j,k}_a = \varepsilon$ satisfies requirements (1)--(4), as the reader may easily verify.

    If, on the other hand, $j > j_b$, then we need to satisfy both the conditions (4a) and (4b). Let
    \[
      s_0 = r \cup \bigcup \{q_{d^-_{\vec{a}}[i_{\vec{a}}\mapsto \varepsilon], \ell_{a_1}}
      \mid \vec{a} \text{ is a subset-final segment of } b\}.
    \]
    As in the previous case, $s_0$ is a condition in $\bb{P}$. By the definition of
    $j_b$, the fact that $j > j_b$, and the fact that $\varepsilon \notin u(r)$,
    we know that $(\varepsilon, j) \notin \dom(s_0)$. Just as in the previous case,
    we can extend $s_0$ to a condition $s$ such that
    $(\varepsilon, j) \in \dom(s)$ and $s(\varepsilon, j) \geq k$. Also as in the previous case, this $s$ forces
    that letting $\varepsilon^{j,k}_a = \varepsilon$ satisfies requirements (1)--(4), as desired.

    By genericity, our analysis in $V$ shows that we may choose in $V[G]$ an $\varepsilon^{j,k}_b$
    satisfying requirements (1)--(4), and thereby continue with our construction.

    Finally, suppose that $b \in [A]^{n+1}$ and fix $(j,k) \in I(b)_{>j^*}$, and suppose
    that we have defined $\varepsilon^{j,k}_a$ for all nonempty $a \subsetneq b$.
    We will define an $\varepsilon^{j,k}_b \in H_n$; this will be similar to the previous
    case, but we no longer need to satisfy requirement (4b) and can therefore
    focus exclusively on long strings. Suppose that $\vec{a} = \langle a_i \mid
    1 \leq i \leq n \rangle$ is a long string for $b$. Set $\vec{a}^- :=
    \langle a_i \mid 1 \leq i < n \rangle$ and $d_{\vec{a}}^- := d^{j,k}_{\vec{a}^-}$,
    and note that $\vec{a}^-$ is a long string for $a_{n-1}$. As in the previous case,
    we have $q_{d_{\vec{a}}^-, 0} \in G$, $d^-_{\vec{a}}(n) = \delta_n$ and,
    once we have defined $\varepsilon^{j,k}_b$, we will have
    $d^{j,k}_{\vec{a}} = d^-_{\vec{a}}[n \mapsto \varepsilon^{j,k}_b]$.

    To see that we can find an ordinal $\varepsilon^{j,k}_b$ satisfying requirements
    (1)--(4), move back to $V$ and fix an arbitrary $r \in \bb{P}$ extending
    \[
    q^* := \bigcup \{q_{d^-_{\vec{a}}, 0} \mid \vec{a} \text{ is a long string for }b\},
    \]
    which we know to be in $G$. We will find $s \leq r$ and an ordinal $\varepsilon$
    such that $s$ forces $\varepsilon$ to be a valid choice for $\varepsilon^{j,k}_b$.
    By the preceding paragraph and Claim \ref{ds_claim_2}, we know that, for every
    long string $\vec{a}$ for $b$, the collection $\{v_{d^-_{\vec{a}}[n \mapsto
    \varepsilon]} \mid \varepsilon \in H_n\}$ is an infinite $\Delta$-system with
    root $v_{d^-_{\vec{a}} \setminus \{\delta_n\}, n}$. Therefore, we can fix an
    $\varepsilon \in H_n$ such that $\varepsilon \notin u(r)$ and such that
    $v_{d^-_{\vec{a}}[n \mapsto \varepsilon]} \setminus v_{d^-_{\vec{a}} \setminus
    \{\delta_n\}, n}$ is disjoint from $u(r)$ for every long string $\vec{a}$ for
    $b$. For each such $\vec{a}$, Claim \ref{ds_claim_3} implies that
    \[
    q_{d^-_{\vec{a}}[n\mapsto \varepsilon], 0} \restriction
    (v_{d^-_{\vec{a}} \setminus \{\delta_n\}, n} \times \omega) =
    q_{d^-_{\vec{a}},0} \restriction (v_{d^-_{\vec{a}} \setminus \{\delta_n\}, n} \times \omega),
    \]
    and we know that $r$ extends $q_{d^-_{\vec{a}},0}$. Therefore,
    $q_{d^-_{\vec{a}}[n\mapsto \varepsilon], 0}$ is compatible with $r$. Moreover,
    for all pairs $\vec{a}$ and $\vec{a}^*$ of long strings for $b$, since
    $q_{d^-_{\vec{a}}, 0}$ and $q_{d^-_{\vec{a}^*}, 0}$ are both in $G$ and are therefore
    compatible, Claim \ref{compatible_claim} implies that $q_{d^-_{\vec{a}}[n\mapsto
    \varepsilon], 0}$ and $q_{d^-_{\vec{a}^*}[n\mapsto \varepsilon], 0}$ are compatible.

    Now let
    \[
      s_0 = r \cup \bigcup \{q_{d^-_{\vec{a}}[n\mapsto \varepsilon], 0}
      \mid \vec{a} \text{ is a long string for } b\}.
    \]
    By the previous paragraph, $s_0$ is a condition in $\bb{P}$. By the definition of
    $j^*$, the fact that $j > j^*$, and the fact that
    $\varepsilon \notin u(r)$, we know that $(\varepsilon, j) \notin \dom(s_0)$.
    Therefore we can extend $s_0$ to a condition $s$ such that
    $(\varepsilon, j) \in \dom(s)$ and $s(\varepsilon, j) \geq k$, i.e.,
    $s \Vdash ``(j,k) \in I(\dot{f}_\varepsilon)"$. This $s$ in fact forces that letting
    $\varepsilon^{j,k}_a = \varepsilon$ satisfies requirements (1)--(4), as the
    reader may easily verify.

    Suppose now that the construction of the ordinals
     \[
      \langle \varepsilon^{j,k}_a \mid a \in [A]^{\leq n+1} \text{ is nonempty and }
      (j,k) \in I(a)_{>j^*} \rangle
    \] is completed. For all $a \in [A]^n$ and all $(j,k) \in I(a)_{>j^*}$, let $\mathcal{A}^{j,k}_n(a)$
    be defined as in Section \ref{defining_trivializations_section}, using the
    $n$-coherent family $\Phi$ and the ordinals $\langle \varepsilon^{j,k}_{a'}
    \mid a' \in [A]^{\leq n} \rangle$. Similarly define $\mathcal{C}^{j,k}_n(b)$
    for $b \in [A]^{n+1}$ and $(j,k) \in I(b)_{>j^*}$. For $a \in [A]^n$, define a function
    $\psi_a:I(a) \rightarrow \bb{Z}$ as follows. If $(j,k) \in I(a)$ and $j \leq j^*$,
    then let $\psi_a(j,k) = 0$. If $(j,k) \in I(a)_{>j^*}$, then let
    $\psi_a(j,k) = \mathcal{A}^{j,k}_n(a)(j,k)$. Define $\psi_{\vec{a}}$ for
    non-increasing $\vec{\alpha} \in A^n$ in the unique way that renders
    $\Psi = \langle \psi_{\vec{\alpha}} \mid \vec{\alpha} \in A^n \rangle$ an alternating
    family. We claim that $\Psi$ together with the natural number $j^*$ witnesses the triviality
    of $\Phi \restriction A$ in the sense of Fact \ref{trivial_equivalence_fact}.

    We first show that each $\psi_a$ is finitely supported. To see this, fix an arbitrary
    $a \in [A]^n$. For each nonempty $a' \subseteq a$, we have a finite partial
    function $\mathsf{e}_{\ell_{a'}}$ such that, if $\vec{a}$ is a subset-final
    segment of $a$ with $a_1 = a'$, then, for all $(j,k) \in I(a)_{>j^*}$,
    if $q_{d^{j,k}_{\vec{a}}, \ell_{a'}} \in G$, then
    $\mathsf{e}(d^{j,k}_{\vec{a}}) = \mathsf{e}_{\ell_{a'}}$. Fix a natural
    number $j^*_a \geq j_a$ such that, for all nonempty $a' \subseteq a$, we have
    $\dom(\mathsf{e}_{\ell_{a'}}) \subseteq (j^*_a \times \omega)$.

    We claim that $\psi_a(j,k) = 0$ for all $(j,k) \in I(a)_{>j^*_a}$. To see this, fix such
    a pair $(j,k)$. By the definition of $\psi_a(j,k)$, we know that
    $\psi_a(j,k) = \mathcal{A}^{j,k}_n(a)(j,k)$. By Lemma \ref{a_c_form_lemma},
    we know that $\mathcal{A}^{j,k}_n(a)$ is of the form
    \[
      \sum_{i < \ell} c_i e(d^{j,k}_{\vec{a}_i}),
    \]
    where $\ell < \omega$ and each $c_i$ is an integer and each
    $\vec{a}_i$ is a subset-final segment of $a$ with $|a_i(1)| > 1$ (recall that $\vec{a}_i(1)$ denotes the first element of $\vec{a}_i$).
    Moreover, since $j > j^*_a \geq j_a$, we know by condition (4b) that $q_{d^{j,k}_{\vec{a}_i}, \ell_{\vec{a}_i(1)}} \in G$, and hence that
    $\mathsf{e}(d^{j,k}_{\vec{a}_i}) = \mathsf{e}_{\ell_{\vec{a}_i(1)}}$, for all $i < \ell$.
    In particular, $\dom(\mathsf{e}(d^{j,k}_{\vec{a}_i})) \subseteq (j^*_a \times \omega)$,
    so $e(d^{j,k}_{\vec{a}_i})(j,k) = 0$. It follows that $\psi_a(j,k) = 0$.
    In consequence, the support of $\psi_a$ is a subset of $I(a) \cap ((j^*_a+1) \times \omega)$,
    which is a finite set.

    It now only remains to be shown that for all $\vec{\beta} \in A^{n+1}$ and all
    $(j,k) \in I(\vec{\beta})_{>j^*}$,
    \[
      e(\vec{\beta})(j,k) = \sum_{i=0}^n (-1)^i \psi_{\vec{\beta}^i}(j,k).
    \]
    Since $\Phi$ and $\Psi$ are both alternating, it suffices to prove this
    for $b \in [A]^{n+1}$. Fix such a $b$ and a coordinate-pair $(j,k) \in I(b)_{>j^*}$.
    Notice that for every long string $\vec{a}$ for $b$, since $\varepsilon^{j,k}_{\vec{a}}$
    satisfies requirement (4a), we have $q_{d^{j,k}_{\vec{a}, 0}} \in G$
    and hence $\mathsf{e}(d^{j,k}_{\vec{a}}) = \mathsf{e}_0$. Thus we have
    $e(d^{j,k}_{\vec{a}})(j,k) = e_0(j,k)$, where $e_0 : \omega \times \omega
    \rightarrow \bb{Z}$ is the function whose restriction to its support is equal
    to $\mathsf{e}_0$. Moreover, by the construction of $\varepsilon^{j,k}_{a}$
    for nonempty $a \subseteq b$ and the assumption that $(j,k) \in I(b)_{>j^*}$,
    we know that $(j,k) \in I(f_{\varepsilon^{j,k}_a})$ for all nonempty $a \subseteq b$.
    Therefore the hypotheses of Fact \ref{cancellation_fact} hold, and consequently $\mathcal{C}_n^{j,k}(b)(j,k) = 0$. By the definition of
    $\mathcal{C}_n^{j,k}(b)$, we then have
    \[
      0 = \mathcal{C}_n^{j,k}(b)(j,k) = e(b)(j,k) - \sum_{i=0}^n(-1)^i \mathcal{A}^{j,k}_n
      (b^i)(j,k) = e(b)(j,k) - \sum_{i=0}^n (-1)^i \psi_{b^i}(j,k),
    \]
    implying that
    \[
      e(b)(j,k) = \sum_{i=0}^n (-1)^i \psi_{b^i}(j,k),
    \]
    as desired.

    It follows that, in $V[G]$, the restricted family $\Phi \restriction A$ is trivial.
    By our inductive hypotheses, together with the fact that $|A| = \lambda_{n-1}$,  Lemma \ref{summarypropagatinglemma} applies, and we may conclude that $\Phi$ is trivial; this concludes the proof.
\end{proof}
Clearly the theorem stated in our introduction is a special case of Theorem \ref{maintheorem}; observe also that assuming, for example, the generalized continuum hypothesis in our ground model $V$ yields the corollary recorded there as well.
\section{Conclusion}\label{conclusion}
As noted in our introduction, this work fully answers the first question, and partially or potentially addresses the second question, appearing in \cite{SVHDL}. We restate the latter:
\begin{question}\label{Q1}
What is the minimum value of the continuum compatible with the statement ``$\mathrm{lim}^n\,\mathbf{A}=0$ for all $n>0$''?
\end{question}
By our Main Corollary, this question is tantamount to the following:
\begin{question}\label{Q2} Does $2^{\aleph_0}<\aleph_\omega$ imply that $\mathrm{lim}^k\,\mathbf{A}\neq 0$ for some $k>0$?
\end{question}
Answering this question will entail answering the following (a revision, in light of present knowledge, of one appearing in \cite{moorepfa}):
\begin{question} Does $2^{\aleph_0}\leq\aleph_2$ imply that either $\mathrm{lim}^1\,\mathbf{A}\neq 0$ or $\mathrm{lim}^2\,\mathbf{A}\neq 0$?
\end{question}
Of interest in its own right, but all the more so in light of Question \ref{Q2}, is:
\begin{question} What is the behavior of the groups $\mathrm{lim}^n\,\mathbf{A}$ in the standard forcing extensions in which $2^{\aleph_0}=\aleph_2$? By \cite{B1}, of particular interest among them will be those models in which $\mathfrak{b}<\mathfrak{d}$; prominent among these is the Miller model.
\end{question}

The fundamental reason that $2^{\aleph_0}=\aleph_n$ implies $\mathrm{lim}^n\,\mathbf{A}\neq 0$ when $n=1$ is that the answer to the following question is yes when $n=1$ as well.
\begin{question}\label{Q5} Is it a \textsf{ZFC} theorem that any $F\subseteq{^\omega}\omega$ of $<^{*}$-ordertype $\omega_n$ indexes a nontrivial $n$-coherent family?
\end{question}

Recently, Veli\v{c}kovi\'{c} and Vignati \cite{VV} have obtained a positive answer
to Question \ref{Q5} in the presence of additional cardinal arithmetic assumptions.
In particular, they prove that if $2^{\omega_k} < 2^{\omega_{k+1}}$ for all $1 \leq k < n$,
then every $F \subseteq {^\omega}\omega$ of $<^*$-ordertype $\omega_n$ indexes a nontrivial
$n$-coherent family.

A main way of seeing that the answer to Question \ref{Q5} is yes when $n=1$ applies walks techniques to transfer large portions of a nontrivial coherent family on $\omega_1$ to any $F$ as above \cite[pp. 96-98]{bekkali}. Question \ref{Q5} is more generally in large part a question about the combinatorics of the ordinals $\omega_n$ $(n\in\omega)$. Here our researches link up with those of \cite{blh} and \cite{tfoa} in ways we may take the occasion to clarify. A central focus of both those works is nontrivial $n$-coherent families of functions indexed by ordinals $\xi$; much as in the present work, such functions represent nonzero elements of $\mathrm{lim}^n$ of an inverse system $\mathbf{C}(\xi,\mathbb{Z})$, which is defined as follows: for any ordinal $\xi$ and abelian group $A$ let $\mathbf{C}(\xi,A)$ denote the inverse system $(\oplus_{\alpha}A, p_{\alpha\beta},\xi)$ in which the maps $p_{\alpha\beta}:\oplus_\beta A\to \oplus_\alpha A$ are projections for all $\alpha\leq\beta<\xi$. Highly relevant for Question \ref{Q5} are the following facts:
\begin{itemize}
\item $\mathrm{lim}^m\,\mathbf{C}(\omega_n,\mathbb{Z})\neq 0$ for all $n\geq m\geq 0$ in G\"{o}del's constructible universe $L$, as shown in \cite{blh}.
\item There exists (in \textsf{ZFC}) an abelian group $A$ such that $\mathrm{lim}^n\,\mathbf{C}(\omega_n,A)\neq 0$ for all $n\geq 0$, as shown in \cite{tfoa}.
\end{itemize}
Against this background, one of the most central of questions is surely the following:
\begin{question}\label{Q6} Is it a \textsf{ZFC} theorem that $\mathrm{lim}^n\,\mathbf{C}(\omega_n,\mathbb{Z})\neq 0$ for all $n\geq 0$? Put differently, do there exist height-$\omega_n$ nontrivial $n$-coherent families of functions mapping to $\mathbb{Z}$ for all $n>0$ in any model of the \textsf{ZFC} axioms?
\end{question}
Broadly speaking, the argument of \cite{tfoa} is that the fundamental content of a main result from \cite{rings} is the existence of higher-dimensional variants of the walks apparatus first appearing in \cite{todpp}. It seems likely that the answer to Question \ref{Q6} will depend on a better understanding of these higher-dimensional walks, particularly if that answer is yes.
\begin{question}\label{Q7} How much of the classical machinery of walks extends to the $n$-dimensional walks on $\omega_n$ of \cite{tfoa}?
\end{question}
Question \ref{Q5} may be viewed as a special case of Question \ref{Q7}. The prominence of classical coherence phenomena in infinitary combinatorics, as well as the growing prominence of their higher-dimensional variants, is partly explained in \cite{blh} by their connections both to the \v{C}ech cohomology groups of the ordinals and to the broader set-theoretic theme of incompactness. The project of understanding higher-dimensional coherence will in part entail understanding its relation to central incompactness principles like $\square(\kappa)$.
\begin{question} What are the behaviors of $n$-dimensional walks on cardinals $\kappa>\omega_n$, particularly under assumptions like $\square(\kappa)$?
\end{question}
Complementary to the \textsf{ZFC} focus of Questions 5--7 above, in other words, are consistency questions. As the possible behaviors of $\mathrm{lim}^n\,\mathbf{C}(\omega_1,A)$ and $\mathrm{lim}^n\,\mathbf{C}(\omega_2,A)$ are either understood or subsumed by previous questions, the following is among the most immediate:
\begin{question}
Is it consistent with the \textsf{ZFC} axioms that $\mathrm{lim}^2\,\mathbf{C}(\omega_3,A)=0$ for all abelian groups $A$?
\end{question}
Most of the above may be framed as questions about the possible ``spectra'' of nontrivial multidimensional coherence phenomena, or equivalently, of nonvanishing $\mathrm{lim}^n$, either of $\mathbf{A}$ or of $\mathbf{C}(-,-)$. Bound up with these questions seems to be that of the relation of these inverse systems' higher limits to each other. Several other families of inverse systems' higher limits seem to be implicated in these behaviors as well; among the more obvious generalizations of the system $\mathbf{A}$, for example, are those which replace its index-set ${^\omega}\omega$ with ${^\kappa}\lambda$ for arbitrary cardinals $\kappa$ and $\lambda$. As it happens, the vanishing of these systems' higher limits carries implications within the framework of Scholze's \emph{condensed mathematics} \cite{condensed, email}. If $\kappa$ is infinite and $\lambda$ is uncountable, then $\mathrm{lim}^1$ of the associated system is nonzero. The systems in which $\lambda=\omega$, on the other hand, are denoted $\mathbf{A}_\kappa$ in \cite{B1}; there it is shown that $\mathrm{lim}^1\,\mathbf{A}=0$ if and only if $\mathrm{lim}^1\,\mathbf{A}_\kappa=0$ for all $\kappa\geq\omega$. Whether this holds for higher $\mathrm{lim}^n$ is an interesting question, as is the following:
\begin{question}
Let $\kappa$ be an uncountable cardinal. Is it consistent that $\mathrm{lim}^n\,\mathbf{A}_\kappa=0$ for all $n>0$?
\end{question}
A second generalization of the system $\mathbf{A}$ retains the order ${^\omega}\omega$, but varies the groups which it indexes, as well as the homomorphisms connecting them. The work \cite{bbm} isolates a class of such systems significant in strong homology computations; it then shows that arguments applied to $\mathbf{A}$ in \cite{SVHDL} in fact apply to this broader class of systems. This carries the consequence that it is consistent with the \textsf{ZFC} axioms that strong homology is additive on the category of locally compact separable metric spaces; notably, however, these arguments require the existence of a weakly compact cardinal. Somewhat surprisingly, and in contrast to \cite{SVHDL} and \cite{bbm}, there is no straightforward adaptation of the present work's argument to this wider class, for the simple reason that the equivalence of type I and type II triviality so essential to this paper's argument no longer holds in that more general setting.
\begin{question} What is the consistency strength of the statement ``strong homology is additive on the category of locally compact separable metric spaces''?
\end{question}
A last context in which these questions are likely interesting is in the presence of determinacy hypotheses. Relatedly, one might ask how ``definable'' a nontrivial $n$-coherent family of functions indexed by ${^\omega}\omega$ (viewed as a set of real numbers) can be. When $n=1$, such a family is necessarily nonanalytic \cite{todcmpct}; the following question was communicated to the first author by Justin Tatch Moore in 2014.
\begin{question}
Fix $n>1$. Can a nontrivial $n$-coherent family of functions indexed by ${^\omega}\omega$ be analytic?
\end{question}

\bibliographystyle{amsplain}
\bibliography{SVHDLw}

\providecommand{\bysame}{\leavevmode\hbox to3em{\hrulefill}\thinspace}
\providecommand{\MR}{\relax\ifhmode\unskip\space\fi MR }
\providecommand{\MRhref}[2]{%
  \href{http://www.ams.org/mathscinet-getitem?mr=#1}{#2}
}
\providecommand{\href}[2]{#2}
\begin{thebibliography}{10}

\bibitem{bbm}
Nathaniel Bannister, Jeffrey Bergfalk, and Justin Tatch~Moore, \emph{On the
  additivity of strong homology for locally compact separable metric spaces},
  (2020), Preprint. https://arxiv.org/abs/2008.13089.

\bibitem{bekkali}
M.~Bekkali, \emph{Topics in set theory}, Lecture Notes in Mathematics, vol.
  1476, Springer-Verlag, Berlin, 1991, Lebesgue measurability, large cardinals,
  forcing axioms, rho-functions, Notes on lectures by Stevo Todor\v{c}evi\'{c}.

\bibitem{B1}
Jeffrey Bergfalk, \emph{Strong homology, derived limits, and set theory}, Fund.
  Math. \textbf{236} (2017), no.~1, 71--82.

\bibitem{tfoa}
\bysame, \emph{The first omega alephs: from simplices to trees of trees to
  higher walks},  (2020), Preprint. https://arxiv.org/abs/2008.03386.

\bibitem{blh}
Jeffrey Bergfalk and Chris Lambie-Hanson, \emph{The cohomology of the ordinals
  {I}: Basic theory and consistency results},  (2019), Preprint.
  https://arxiv.org/abs/1902.02736.

\bibitem{SVHDL}
\bysame, \emph{Simultaneously vanishing higher derived limits},  (2019),
  Preprint. https://arxiv.org/abs/1907.11744.

\bibitem{email}
Dustin Clausen and Peter Scholze, \emph{personal communication}, July 2019.

\bibitem{dsv}
Alan Dow, Petr Simon, and Jerry~E. Vaughan, \emph{Strong homology and the
  proper forcing axiom}, Proc. Amer. Math. Soc. \textbf{106} (1989), no.~3,
  821--828.

\bibitem{far1}
Ilijas Farah, \emph{A coherent family of partial functions on {$\bold N$}},
  Proc. Amer. Math. Soc. \textbf{124} (1996), no.~9, 2845--2852.

\bibitem{kamo}
Shizuo Kamo, \emph{Almost coinciding families and gaps in {$P(\omega)$}}, J.
  Math. Soc. Japan \textbf{45} (1993), no.~2, 357--368.

\bibitem{hdds}
Chris Lambie-Hanson, \emph{Higher-dimensional {D}elta-systems},  (2020),
  Preprint. https://arxiv.org/abs/2006.01086.

\bibitem{mp}
S.~Marde\v{s}i\'{c} and A.~V. Prasolov, \emph{Strong homology is not additive},
  Trans. Amer. Math. Soc. \textbf{307} (1988), no.~2, 725--744.

\bibitem{rings}
Barry Mitchell, \emph{Rings with several objects}, Advances in Math. \textbf{8}
  (1972), 1--161.

\bibitem{moorepfa}
Justin~Tatch Moore, \emph{The proper forcing axiom}, Proceedings of the
  {I}nternational {C}ongress of {M}athematicians. {V}olume {II}, Hindustan Book
  Agency, New Delhi, 2010, pp.~3--29. \MR{2827783}

\bibitem{condensed}
Peter Scholze, \emph{Lectures on condensed mathematics}, Notes available at
  https://www.math.uni-bonn.de/people/scholze/Condensed.pdf.

\bibitem{spanier}
Edwin~H. Spanier, \emph{Algebraic topology}, Springer-Verlag, New York, 1965.

\bibitem{todpp}
Stev{o} Todorcevic, \emph{Partition problems in topology}, Contemporary
  Mathematics, vol.~84, American Mathematical Society, Providence, RI, 1989.

\bibitem{todcmpct}
Ste{vo} Todorcevic, \emph{The first derived limit and compactly {$F_\sigma$}
  sets}, J. Math. Soc. Japan \textbf{50} (1998), no.~4, 831--836.

\bibitem{VV}
Boban Veli\v{c}kovi\'{c} and Alessandro Vignati, \emph{Nontriviality of higher
  derived limits}, 2021, in preparation.

\end{thebibliography}

\end{document}